\documentclass[12pt,reqno]{amsart}
\usepackage{soul}
\usepackage{multicol,amsmath,graphics,cancel,soul,color,bbm,amsfonts,dsfont,tikz,mathrsfs}
\usepackage[left=1in, right=1in, top=1.1in,bottom=1.1in]{geometry}
\setlength{\parskip}{3.5pt}
\usetikzlibrary{matrix,patterns,positioning}
\numberwithin{equation}{section}
\newcommand{\blue}{\textcolor[rgb]{0.00,0.00,0.00}}
\newcommand{\green}{\textcolor[rgb]{0.00,0.00,0.00}}
%mathcal

\newcommand{\Fc}{\mathcal{F}}

\newcommand{\Hc}{\mathcal{H}}

%mathbb

\newcommand{\D}{\mathbb{D}}
\newcommand{\E}{\mathbb{E}}

\newcommand{\N}{\mathbb{N}}
\newcommand{\Pb}{\mathbb{P}}

\newcommand{\R}{\mathbb{R}}

\newcommand{\Z}{\mathbb{Z}}
%mathbbm

%mathfrak
\newcommand{\Hg}{\mathfrak{H}}

%mathscr
\newcommand{\Sf}{\mathscr{S}}

\newcommand{\lettrequivabien}{\lambda}
\newcommand{\processsymb}{;}

\newcommand{\Norm}[1]{\left\lVert#1\right\rVert}
\newcommand{\Abs}[1]{\left|#1\right|}
\newcommand{\Ip}[1]{\left\langle #1 \right\rangle}

\newcommand{\Indi}[1]{\mathbbm{1}_{#1}}

\newtheorem{thm}{Theorem}[section]

\newtheorem{cor}[thm]{Corollary}

\newcounter{dummy} \numberwithin{dummy}{section}

\newtheorem{Theorem}[dummy]{Theorem}

\newtheorem{Lemma}[dummy]{Lemma}

\newtheorem{Remark}[dummy]{Remark}

\def\1{{L}\hskip -0.21truecm 1}

\begin{document}

\title[Fractional local time approximation]{Approximation of fractional local times: \\ { zero energy and \blue{derivatives}}}
\address{Arturo Jaramillo: Mathematics Research Unit, Universit\'e du Luxembourg, Maison du Nombre 6, Avenue de la Fonte, L-4364 Esch-sur-Alzette, Luxembourg.}
\email{arturo.jaramillogil@uni.lu}
\address{Ivan Nourdin: Mathematics Research Unit, Universit\'e du Luxembourg, Maison du Nombre 6, Avenue de la Fonte, L-4364 Esch-sur-Alzette, Luxembourg.}
\email{ivan.nourdin@uni.lu}
\address{Giovanni Peccati: Mathematics Research Unit, Universit\'e du Luxembourg, Maison du Nombre 6, Avenue de la Fonte, L-4364 Esch-sur-Alzette, Luxembourg.}
\email{giovanni.peccati@uni.lu}

\keywords{Fractional Brownian motion, Malliavin calculus, local time, derivatives of the local time, high frequency observations, functional limit theorems.}
\begin{abstract}

We consider empirical processes associated with high-frequency observations of a fractional Brownian motion (fBm) $X$ with Hurst parameter $H\in (0,1)$, and derive conditions under which these processes verify a (possibly uniform) law of large numbers, as well as a second order (possibly uniform) limit theorem. We devote specific emphasis to the `zero energy' case, corresponding to a kernel whose integral on the real line equals zero. Our asymptotic results are associated with explicit rates of convergence, and are expressed either in terms of the local time of $X$ or of its \blue{derivatives}: in particular, the full force of our finding applies to the `rough range' $0< H < 1/3$, on which the previous literature has been mostly silent. The {\color{black}use of the derivatives} of local times for studying the fluctuations of high-frequency observations of a fBm is new, and is the main technological breakthrough of the present paper. Our results are based on the use of Malliavin calculus and Fourier analysis, and extend and complete several findings in the literature, e.g. by  Jeganathan (2004, 2006, 2008) and Podolskij and Rosenbaum (2018). 

\end{abstract}

\author{Arturo Jaramillo, Ivan Nourdin, Giovanni Peccati}
\date{\today}
\subjclass[2010]{60G22,60H07, 60J55, 60F17}
\maketitle
%==========================================================================================================================
%Introduction
%==========================================================================================================================

\section{Introduction}\label{sec:intro}
\subsection{Overview} Let $X = \{X_t : t\geq 0\}$ be a fractional Brownian motion (fBm) with Hurst parameter $H\in(0,1)$ (see Section \ref{sec:prelimn} for technical definitions). The aim of this paper is to study the asymptotic behaviour (as $n \to \infty$) of empirical processes derived from the high-frequency observations of $X$, that is, of mappings with the form
\begin{align}\label{eq:generalhighfreq}
t\mapsto  G^{(n)}_t := b_{n}\sum_{i=1}^{\lfloor nt\rfloor}f\big(n^{a}(X_{\frac{i}{n}}-\lettrequivabien)\big), \quad t\geq 0,
\end{align}	
where $f$ is a real-valued kernel, $(a, \lettrequivabien )\in\R_+\times \R$, and $\{b_{n}\}_{n\geq 1}$ is a numerical sequence satisfying $b_{n}\rightarrow0$. Our specific aim is to study the {\it first and second order fluctuations} of such random functions, with specific emphasis on the `rough range' $0 < H < 1/3$ --- see Section \ref{ss:introfrac} for a discussion about the relevance of such a set of values. 

Our approach is based on the use of Malliavin calculus and Fourier analysis, and makes use of the \blue{\it derivatives of the local time of} $X$ (see Section \ref{ref:Appendix}). \blue{The existence of local times derivatives for fBm was first established in \cite[Section~28]{GerHor}, partially building on the findings of \cite{Ber2}: several novel properties of these bivariate random fields are proved in Section 5 of the present paper}. \blue{ Our use of the derivatives of local times of $X$ is close in spirit to the study of derivatives of self-intersection local times, which was initiated by Rosen in \cite{Rosen} for the Brownian motion,} and further developed by Markowsky \cite{MDILTCHAOS} and Jung et al. \cite{JMTANAKA} for the fractional Brownian motion case (see also \cite{GuoHuXiao} for the case of a self-intersection of independent fractional Brownian motions and \cite{Nasy} for related results on the Brownian sheet). The derivatives of the local time for a fractional Ornstein Uhlenbeck process are formally discussed in \cite{GuoXiao}.

We briefly observe that values of the Hurst parameter $H$ in the range $(0,\frac12)$ have recently become relevant for stochastic volatility modelling --- see e.g. \cite{GJR}.

\subsection{Statistical motivations and the semimartingale case}\label{ss:semim}  In the last three decades, the study of processes such as \eqref{eq:generalhighfreq} (for $X$ a generic stochastic process, whose definition possibly depends on $n$) has gained particular traction in the statistical literature, since these random functions emerge both as natural approximations of the local time of $X$, and (after a suitable change of variable in the sum) as scaled version of kernel estimates for regression functions in non-stationary time-series --- see e.g. \cite{AP, Ako, Bor, CW, GNS, Jac, J17, LX, NW, ParPhi, Phi, W, WP} for a sample of the available literature on these tightly connected directions of research. When $X$ is a diffusion process or, more generally, a semimartingale, the fluctuations of $G^{(n)}$ are remarkably well understood: a typical result in such a framework states that, under adequate assumptions, $G^{(n)}$ converges uniformly in probability over compact intervals towards a process of the type $c L_t(\lettrequivabien)$ where $c$ is a scalar and $L_t(\lettrequivabien)$ is the {\it local time} of $X$ at $\lettrequivabien$, up to time $t$. Moreover, when suitably normalized, the difference $G^{(n)}_t - c L_t(\lettrequivabien)$ stably converges (as a stochastic process) towards a mixture of Gaussian martingales. The latter result is particularly useful for developing testing procedures. See e.g. \cite[Theorems 1.1 and 1.2]{Jac} for two well-known representative statements, applying to the case where $X$ is a Brownian semimartingale. {\color{black} {\blue {We also mention} that, in the limit case $a=0$, the statistic $G^{(n)}$ has been used in \cite{Alt} for estimating occupation time functionals for $X$.}}

\subsection{Local times}\label{ss:introlt} Let $X$ be a generic real-valued stochastic process defined on a probability space $(\Omega,\mathcal{F},\Pb)$. We recall that, for $t>0$ and $\lettrequivabien\in\R$, the {\it local time} of $X$ up to time $t$ at $\lettrequivabien$ is formally defined as
\begin{align}\label{e:lt}
L_{t}(\lettrequivabien)
  :=\int_{0}^{t}\delta_{0}(X_{s}-\lettrequivabien)ds,
\end{align}
where $\delta_{0}$ denotes the Dirac delta function. A rigorous definition of $L_{t}(\lettrequivabien)$ is obtained by replacing  $\delta_{0}$ by the Gaussian kernel $\phi_{\varepsilon}(x):=(2\pi\varepsilon)^{-\frac{1}{2}}\exp\{-\frac{1}{2\varepsilon}x^2\}$ and taking the limit in probability as $\varepsilon\rightarrow0$ (provided that such limit exists). The random variable $L_{t}(\lettrequivabien)$ is a recurrent object in the theory of stochastic processes, as it naturally arises in connection with several fundamental topics, such as the extension of It\^o's formula to convex functions, the absolute continuity of the occupation measure of $X$ with respect to the Lebesgue measure, and the study of limit theorems for additive functionals of $X$ --- see \cite{Ber0, Ber, CoNuTu, GerHor, HuOk, RY} for some general references on the subject. It is a well-known fact (see e.g. \cite{Ber0, Ber, GerHor}) that, if $X$ is a fBm with Hurst parameter $H\in (0,1)$, then the local time \eqref{e:lt} exists for every $\lettrequivabien$. Moreover, by Theorems 3.1 and 4.1 in \cite{AyWuYi}, the application $(\lettrequivabien, t) \mapsto L_t(\lettrequivabien)$ admits a jointly continuous modification such that the mapping $t\mapsto L_t(\lettrequivabien)$ is $\mathbb{P}$-a.s. locally $\gamma$-H\"older continuous for every $0<\gamma < 1-H$, and one has the following well-known {\it occupation density formula}: for every Borel set $A\subset \R$ and every $t>0$,
$$
\int_0^t {\bf 1}_A(X_s) ds = \int_A L_t(\lettrequivabien) d\lettrequivabien, \quad {\rm a.s.-} \mathbb{P}.
$$
See also Lemma \ref{Lem:existLocalderiv} below. The functional limit theorems evoked in the previous Section \ref{ss:semim} can be regarded as natural extensions of the classical contributions \cite{Skorohod} by Skorokhod, \cite{ErKac} by Erd\"{o}s and Kac, and \cite{Knight} by Knight, that established the convergence of \eqref{eq:generalhighfreq} towards a scalar multiple of the Brownian local time, in the case where $X$ is either a random walk or a Brownian motion. See also Borodin \cite{Bor}.

\subsection{The fractional case}\label{ss:introfrac} The starting point of our analysis is the influential paper by Jeganathan \cite{Jeg}, focussing on the case where, in \eqref{eq:generalhighfreq}, one has that $X_t = X^n_t := \frac{1}{\gamma_n}{S}_{\lfloor nt \rfloor}$,
with $\{S_k : k\geq 1\}$ a discrete-time process, and $\gamma_n$ a normalising sequence such that $X^n$ converges in distribution to an $\alpha$-stable L\'evy motion, for some $\alpha\in (0,2]$. In the case $\alpha=2$, the results of \cite{Jeg} enter the framework of the present paper: in particular, \cite[Theorem 4]{Jeg} yields that, if $X$ is a fractional Brownian motion with Hurst parameter $H\in(0,1)$ and $f\in L^{1}(\R)\cap L^{2}(\R)$, then, for every $\lettrequivabien\in \R$ and every $t>0$,
\begin{align}\label{eq:Jeg}
n^{H-1}\sum_{i=1}^{\lfloor nt\rfloor}f(n^{H}(X_{\frac{i-1}{n}}-\lettrequivabien))
  &\stackrel{L^{2}(\Omega)}{\longrightarrow}L_{t}(\lettrequivabien)\int_{\R}f(x)dx, \quad n\to \infty.
\end{align}
A continuous version of \eqref{eq:Jeg} can be inferred from \cite[Theorem 5]{Jeg}, stating that, under the exact same assumptions on $f$,
\begin{align}\label{eq:Jeg2}
n^{H-1} \int_0^{nt} f ( n^H (X_{s/n} - \lettrequivabien ))ds
  &\stackrel{L^{2}(\Omega)}{\longrightarrow}L_{t}(\lettrequivabien)\int_{\R}f(x)dx, \quad n\to \infty.
\end{align}
One sees immediately that \eqref{eq:Jeg} and \eqref{eq:Jeg2} imply a trivial conclusion in the case of a `zero-energy' function $f$, that is, when $\int_\R f(x) dx = 0$ (we borrowed the expression `zero-energy' from reference \cite{WP11}, that we find particularly illuminating on the matter). 

A refinement of \eqref{eq:Jeg2} in the zero-energy case was first obtained in \cite[Theorem 1]{Jeg2}, where it is proved that, if 
\begin{equation}\label{e:zop}
\frac13 < H<1, \quad \int_\R\big(|f(x)| +|xf(x)|\big) dx <\infty, \quad \mbox{and  } \int_\R f(x)dx = 0,
\end{equation} 
then, as $n\to\infty$,
\begin{equation}\label{e:j2006}
n^{\frac{H-1}{2}} \int_0^{nt} f ( n^H (X_{s/n} - \lettrequivabien ))ds \Longrightarrow \sqrt{b}\, W_{L_t(\lettrequivabien)},
\end{equation}
where $W$ is a Brownian motion independent of $X$, $b>0$ is an explicit constant (depending on $H$ and $f$), and $\Longrightarrow$ indicates convergence in distribution in the Polish space $C([0,1])$. In the case $\lettrequivabien = 0$, a similar statement can be deduced as a special case of \cite[Theorem 1]{NX} (see also \cite{HNX}), implying that the functional convergence \eqref{e:j2006} continues to hold if $1/3<H< 1$ and $\int_\R | f(x) | |x|^{1/H-1} dx<\infty$. We also notice that the results from \cite{HNX, Jeg, NX} are all extensions of the well-known {\it Papanicolau-Stroock-Varadhan Theorem}, as stated e.g. in \cite[Theorem XIII-(2.6) and Proposition XIII-(2.8)] {RY}. 

An extension of \eqref{eq:Jeg}, in the zero energy case and for $\lettrequivabien = 0$, was obtained in \cite[Theorem 4]{Jeg3}, where it is shown that, if $1/3 < H<1$, $\int_\R(|f(x)|^p +|xf(x)|) dx <\infty$ for $p=1,2,3,4$, and $\int_\R f(x)dx = 0$, then, as $n\to\infty$,
\begin{equation}\label{e:j2008}
n^{\frac{H-1}{2}} \sum_{i=1}^{\lfloor nt\rfloor}f(X_{i-1} ) \stackrel{Law}{=}  n^{\frac{H-1}{2}} \sum_{i=1}^{\lfloor nt\rfloor}f(n^H X_{\frac{i-1}{n}} ) \stackrel{f.d.d.}{\longrightarrow} \sqrt{b}\, W_{L_t(0)},
\end{equation}
where $\stackrel{f.d.d.}{\longrightarrow}$ indicates convergence in the sense of finite-dimensional distributions. Although we did not check the details, it seems reasonable that the finite-dimensional convergence in \cite[Theorem 4]{Jeg3} can be lifted to convergence in the Skorohod space $D([0,T])$, for every $T>0$, and that the conditions on $f$ can be relaxed so that they match \eqref{e:zop}.

We finally observe that the limit result \eqref{eq:Jeg} has been recently extended (in a fully functional setting) in \cite[Theorem 1.1]{PoRo} to the case of sequences of the type  \eqref{eq:generalhighfreq}, where the summand $f(n^{H}(X_{\frac{i-1}{n}}-\lettrequivabien) )$  is replaced by a more general bivariate mapping $f\big( n^{H}( X_{\frac{i-1}{n}} -\lettrequivabien ), n^H (X_{\frac{i}{n}} - X_{\frac{i-1}{n}} ) \big ) $. It is interesting to notice that the arguments used in the proof of \cite[Theorem 1.1]{PoRo} yield that the convergence \eqref{eq:Jeg} also holds uniformly in probability on compact intervals.

% 
%
%
%In view of the convergence \eqref{eq:Jeg}, it is natural to ask for the behavior of the associated fluctuations. Although this problem hasn't yet been addressed for the fractional Brownian motion, some partial results have been studied in \cite{Bor} for the case where $X$ is a Brownian motion and in \cite{Jac} for the case where $X$ is a diffusion process. More precisely, it has been shown that in these instances, a suitable normalization of the fluctuations of \eqref{eq:Jeg} converges in distribution towards a mixture of Gaussian random variables.
%
%
%
%
%
%
%
%Despite its relevance the financial literature (see e.g. \cite{GJR}, as well as Remark \ref{r:ff} below), the case in which $X$ is a fBm -- so that in particular, $X$ is not a semimartingale -- is particularly ill-studied in the existing literature. \cite{D, Jeg, PoRo, TyuPhi, WP}
%
%
%
%
%
%The aim of the present paper is to substantially extend the findings of \cite{Jeg, PoRo, TyuPhi}, so to refine our understanding of the properties of \eqref{eq:generalhighfreq} as estimators of the local time process of $X$ as well as of its weak derivatives, whenever the latter are well-defined. To the best of our expertise, such weak derivatives are defined and studied here for the first time. 

\subsection{A representative statement} One of the crucial aims of the present paper is to explore in full generality the asymptotic behaviour of \eqref{eq:generalhighfreq}, in the case where $f$ has zero energy, and $X$ is a fBm with Hurst parameter in the range $0<  H <1/3$. Apart from the critical case $H=1/3$ (to which our techniques do not apply), this exactly corresponds to the values of $H$ that are not covered by the references \cite{HNX, Jeg2, Jeg3, NX} discussed in the previous section. As anticipated, one of the methodological breakthroughs of our work is the {\color{black}use} of the {\it derivatives} of the local times of $X$; the existence of such objects, as well as some of their basic properties, is discussed in Section \ref{ref:Appendix}. 

\begin{Remark}{\rm

{\color{black} 

\blue{As already recalled,} the \blue{existence of derivatives (DLT) for local times of fBm (with a suitably small Hurst parameter) was first proved in \cite[Section~28]{GerHor}, by using the general results of \cite{Ber2}}. \green{In this manuscript, our results concerning such a topic} (see Lemma \ref{Lem:existLocalderiv} in the Appendix) differ from those of the existing literature in several ways: (i) we introduce the DLT, not as an almost sure derivative of the local time with respect to the space variable, but rather as the limit a of a suitable sequence of approximating variables under the topology of $L^{2}(\Omega)$, which is the most natural framework for the purpose of our application (additionally, in order to make the comparison with the results from \cite{GerHor} and \cite{Ber2} more transparent, in Lemma \ref{Lem:existLocalderiv} we prove that our definition of DLT and the one from \cite{GerHor} are equivalent);  (ii) we \blue{provide} sharp conditions for the existence of the DLT, which allows us to use it under situations more general than those presented in \cite{GerHor}; (iii) we determine the time regularity of the DLT, which is a key ingredient for \blue{lifting} our finite-dimensional results for $G_{t}^{(n)}$  to a functional level.}}
\end{Remark}

Our main findings are stated in full generality in Theorem \ref{thm:main2} and Theorem \ref{thm:main2p} below, and require a non negligible amount of further notation, introduced in Section \ref{ss:notation}. In order to motivate the reader, we will now state some immediate consequences of such general statements, that directly capture the spirit of our work. In particular, the forthcoming Theorem \ref{t:rep} illuminates the meaning of the threshold $1/3$ observed in \cite{HNX, Jeg2, Jeg3, NX}, by connecting such a value to the existence of  derivatives for the local time of $X$.

\begin{thm}[Special case of Theorems \ref{thm:main2} and \ref{thm:main2p}]\label{t:introex}\label{t:rep} Let $X$ denote a fBm with Hurst parameter $H\in (0,1)$. Let $f : \R\to \R$ be a continuous function with compact support satisfying $\int_\R f(x) dx = 0$. Then, $f$ admits a unique antiderivative $F$ verifying $F\in L^1(\R)$. Writing $\mu[F] : = \int_\R F(x)dx$, the following conclusions {\rm (1)--(4)} hold, as $n\to\infty$.
\begin{itemize}
\item[\rm (1)] For every $0< H<1/3$, the first derivative of the local time of $X$ (defined as in Section \ref{ss:notation}), noted $L^{(1)} := \{L_t^{(1)}(\lettrequivabien) : (t,\lettrequivabien)\in [0,\infty) \times \R\}$, exists, verifies $\E[L_t^{(1)}(\lettrequivabien)^2] \in (0,\infty) $ for every $t>0$ and $\lettrequivabien\in\R$, and moreover
\begin{eqnarray}\label{e:t1}
\E\left[ \left (n^{2H-1}\sum_{i=1}^{\lfloor nt\rfloor}f(n^{H}(X_{\frac{i-1}{n}}-\lettrequivabien)) - L_t^{(1)}(\lettrequivabien) \mu[F]  \right)^2\, \right] = O(n^{ - 2H\kappa}),
\end{eqnarray}
for every $\kappa <\frac12(\frac1H-3) \wedge \frac12$, where the constant involved in the ` $O(\cdot)$' notation possibly depends on $t$.
\item[\rm (2)] For every $0<H<1/4$, one has also that, for every $T>0$,
$$
\sup_{t\in [0,T]} \left| n^{2H-1}\sum_{i=1}^{\lfloor nt\rfloor}f(n^{H}(X_{\frac{i-1}{n}}-\lettrequivabien)) - L_t^{(1)}(\lettrequivabien)\mu[F]  \right| \stackrel{\mathbb{P}}{\longrightarrow} 0,
$$
where $\stackrel{\mathbb{P}}{\longrightarrow}$ stands for convergence in probability.
\item[\rm (3)] Fix $0<H < 1/5$ and assume that $\widetilde{F}\in L^1(\R)$, where $\widetilde{F}(x) := xF(x)$. Then , the second derivative of the local time of $X$, written $L^{(2)} := \{L_t^{(2)}(\lettrequivabien) : (t,\lettrequivabien)\in [0,\infty) \times \R\}$, exists, verifies $\E[L_t^{(2)}(\lettrequivabien)^2] \in (0,\infty) $ for every $t>0$ and $\lettrequivabien\in\R$, and 
\begin{eqnarray}\label{e:t2}
&&\E\Big[ n^H\Big (\Big( n^{2H-1}\sum_{i=1}^{\lfloor nt\rfloor}f(n^{H}(X_{\frac{i-1}{n}}-\lettrequivabien)) - L_t^{(1)}(\lettrequivabien) \mu[F] \Big) \\
 && \quad\quad\quad\quad\quad\quad\quad\quad\quad\quad\quad\quad\quad+   L_t^{(2)} \mu[\widetilde{F}] \Big)^2\, \Big]  = O(n^{ - 2H\kappa}),\notag
\end{eqnarray}
for every $\kappa <\frac12(\frac1H-5)\wedge \frac12$.
\item[\rm (4)] For every $0<H < 1/6$, the asymptotic relation at Point {\rm (3)} takes also place in the sense of uniform convergence in probability over compact intervals.
\end{itemize}

\end{thm}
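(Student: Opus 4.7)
The plan is to deduce the four assertions from a single Fourier-analytic expansion that uses the zero-energy hypothesis to factor out an antiderivative of $f$, combined with the moment bounds and time-regularity of the derivatives of the local time collected in Lemma \ref{Lem:existLocalderiv}. Set $F(x) := \int_{-\infty}^{x} f(y)\,dy$ and $\widetilde F(x) := xF(x)$. Because $f$ is continuous and compactly supported with $\int_\R f = 0$, the function $F$ is continuous and compactly supported, hence belongs to every $L^p(\R)$, and on the Fourier side we have the identities $\widehat f(\xi) = i\xi\widehat F(\xi)$, $\widehat F(0) = \mu[F]$, and $\widehat F'(0) = -i\mu[\widetilde F]$ (the latter under the integrability assumption on $\widetilde F$).

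The core of the argument for (1) and (3) is the algebraic identity
\begin{equation*}
n^{2H-1}\sum_{i=1}^{\nt} f\bigl(n^H(X_{(i-1)/n}-\lambda)\bigr)
= \frac{1}{2\pi}\int_\R i\eta\,\widehat F(n^{-H}\eta)\,e^{-i\eta\lambda}
\Bigl(\tfrac{1}{n}\sum_{i=1}^{\nt} e^{i\eta X_{(i-1)/n}}\Bigr)\,d\eta,
\end{equation*}
obtained by Fourier-inverting $f$, inserting $\widehat f(\xi)=i\xi\widehat F(\xi)$, and rescaling $\eta=n^H\xi$. Interpreting the inner sum as a Riemann approximation of $\int_0^t e^{i\eta X_s}\,ds$ and expanding $\widehat F(n^{-H}\eta) = \mu[F] - in^{-H}\eta\,\mu[\widetilde F] + O(n^{-2H}\eta^2)$, the Fourier-integral representation of $L_t^{(k)}(\lambda)$ recalled in Lemma \ref{Lem:existLocalderiv} heuristically produces the principal term $L_t^{(1)}(\lambda)\mu[F]$ and the next-order correction $-n^{-H}L_t^{(2)}(\lambda)\mu[\widetilde F]$, matching exactly the statements \eqref{e:t1} and \eqref{e:t2}.

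To turn this heuristic into quantitative $L^2$-bounds, I would compute the second moment of the error by exploiting the joint Gaussianity of $(X_s)$, which makes $\E[e^{i\eta_1 X_{s_1}+i\eta_2 X_{s_2}}]$ explicit. After a change of variables the dominant contribution to the $L^2$-norm of the error matches the very integral defining $\E[L_t^{(k)}(\lambda)^2]$, whose convergence forces $H<1/3$ (for $k=1$) and $H<1/5$ (for $k=2$); these are exactly the existence thresholds supplied by the Appendix. The rate $n^{-2H\kappa}$ with $\kappa < \tfrac12(\tfrac1H-2k-1)\wedge\tfrac12$ then arises by partitioning the $\eta$-integrations near the origin and balancing the Taylor-remainder error in $\widehat F$ against the Riemann-sum error, the $\wedge\tfrac12$ reflecting the saturation at the discretization scale $n^{-H}$.

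For the uniform-in-$t$ statements (2) and (4), the same bounds applied to the increments $G_t^{(n)}-G_s^{(n)}$, together with the H\"older exponent of $t\mapsto L_t^{(k)}(\lambda)$ supplied by Lemma \ref{Lem:existLocalderiv} (of order strictly below $1-(2k+1)H$), give via Kolmogorov's criterion tightness of the error process under the stricter constraints $H<1/(2k+2)$, i.e., $H<1/4$ for $k=1$ and $H<1/6$ for $k=2$. The main obstacle of the whole scheme lies precisely in this uniform step: one must control the supremum over $t$ of a stochastic Fourier integral indexed by $\eta\in\R$ while simultaneously tracking both the near-singular behaviour at $\eta=0$ and the Riemann-sum discretization error, and it is here that the Malliavin-calculus machinery (chaos decomposition and contraction estimates) becomes essential for producing moment bounds on $t$-increments that remain uniform in $n$.
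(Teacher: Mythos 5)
Your overall strategy for parts (1)--(2) is exactly the one the paper uses: Theorem \ref{t:rep} is obtained by specializing Theorem \ref{thm:main2} to $\ell=1$, $g=F$, $a=H$ (so that $G^{(n,1)}_{t,\lettrequivabien,H}[F]$ is the zero-energy sum up to the single deterministic $i=1$ term, which is $O(n^{2H-1})$), and Theorem \ref{thm:main2} is proved by precisely the steps you outline --- Fourier inversion with $\widehat f(\xi)=\mathbf{i}\xi\widehat F(\xi)$, splitting the error into a Riemann-sum (discretization) part and the part coming from replacing $\widehat F(\xi/n^{H})$ by $\widehat F(0)=\mu[F]$, and second-moment computations via joint Gaussianity and local nondeterminism (Lemma \ref{lem:techintegralbound}). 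The thresholds $H<1/3$, $H<1/4$ and the rate $n^{-2H\kappa}$ come out as you describe.

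The gap is in parts (3)--(4). You treat the second-order statement as the same computation pushed one order further in the Taylor expansion of $\widehat F$, with the Riemann-sum error merely ``balanced'' against the Taylor remainder. But the discretization error --- the term the paper calls $R^{(n)}_{t,1}$, generated by $e^{\mathbf{i}\xi X_{(i-1)/n}}-e^{\mathbf{i}\xi X_{s}}$ --- must now be shown to be $o(n^{-H})$ in $L^{2}$, because the whole expression is multiplied by $n^{H}$ in \eqref{e:t2}. The direct Gaussian second-moment bound (comparing covariance matrices that differ by $O(n^{-2H})$ and raising the difference to a power $\kappa<\tfrac12$, as in \eqref{eq:diffFouriertransfGaussian}) only yields $\|R^{(n)}_{t,1}\|_{L^{2}}=O(n^{-H\kappa})$ with $\kappa<\tfrac12$, so $n^{H}\|R^{(n)}_{t,1}\|_{L^{2}}$ is not even bounded by this route. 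The missing idea is to exploit the cancellation hidden in $e^{\mathbf{i}\xi\Delta_{i,s}X}-e^{-\frac12\xi^{2}\sigma_{i,s}^{2}}$ by the Malliavin integration by parts \eqref{eq:Fasdeltaq}, writing $R^{(n)}_{t,1}$ as a Skorokhod integral plus a trace term and bounding the divergence by duality; this is what produces $n^{2H}\|R^{(n)}_{t,1}\|_{L^{2}}^{2}=O(n^{-2H\kappa})$ in Section \ref{sec:firstfluc2t2}. Relatedly, you locate the need for Malliavin calculus in the uniform-in-$t$ step, but it is already indispensable for the pointwise estimate \eqref{e:t2}; the uniform statements then follow from increment bounds and Billingsley's criterion (rather than Kolmogorov's, since the error process is c\`adl\`ag and one must work with the modulus $\rho_{n}(t_1,t_2)$ instead of $|t_2-t_1|$), under the stricter conditions $H<1/4$ and $H<1/6$ that you correctly identify.
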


 The contents of Lemma \ref{Lem:existLocalderiv} and Lemma \ref{lem:nonexistenceLell} below implies that, for every $\ell = 1,2,...$, the  derivative $L^{(\ell)}$ exists if and only if $H< 1/(2\ell +1)$. We will see later on (Corollary \ref{c:cimbalon}) that our findings can also be used in order to study the fluctuations of Jeganathan's limit result \eqref{eq:Jeg} in the case $\int_\R f(x) dx \neq 0$.
%
%\begin{thm}[Special case of Theorems \ref{thm:main2} and \ref{thm:main2p}]\label{t:introex}\label{t:rep} Let ...
%
%
%\end{thm}

%As a by-product of our analysis, we find the following interesting phenomena that contrasts with the results on the current literature of high-frequency observations: while the limit of \eqref{eq:generalhighfreq} has been previously proven to be a (possibly trivial) multiple of the local time of $X$ at $\lettrequivabien$, we show that under suitable conditions on $H$, and provided that $f$ is the derivative of order $\ell$ of an integrable function, there exists a suitable choice of $b_{n}$ such that the processes \eqref{eq:generalhighfreq} converge to a constant multiple of the random function
%\begin{align}\label{eq:derlocaltimes}
%t\mapsto \int_{0}^{t}\delta_{0}^{(\ell)}(B_{s}-\lettrequivabien)ds, \quad t\geq 0,
%\end{align}
%where $\delta_{0}^{(\ell)}$ denotes the $\ell$-th derivative of the Dirac delta function at zero. The topology under which this convergence takes place and the precise conditions that $f$ and $H$ must satisfy will be presented in Theorem \ref{thm:main2}. Differently from previous results, we establish conditions under which a functional uniform convergence in probability holds, not only for the process \eqref{eq:generalhighfreq}, but also for the associated fluctuations around its limit.
%

\begin{Remark}{\rm Combining \eqref{e:t1} with e.g. \eqref{e:j2008} one sees that, choosing $H>1/3$ and $a=H$ in \eqref{eq:generalhighfreq}, the correct normalisation in the zero energy case is given by $b_n = n^{\frac{H-1}{2}}$, whereas for $H<1/3$ the normalisation has to be $b_n  = n^{2H-1}$. The two exponents $\frac{H-1}{2}$ and $2H-1$ coincide for the critical value $H=1/3$. As anticipated, the study of \eqref{eq:generalhighfreq} for $H=1/3$ is outside the scope both of our techniques (since in this case, the derivative of the local time of $X$ is not well-defined, by virtue of Lemma \ref{lem:nonexistenceLell} in the Appendix), and of those of \cite{HNX, Jeg2, Jeg3, NX} (e.g., since the constant $b$ appearing in \eqref{e:j2006} and \eqref{e:j2008} equals infinity, see \cite{Jeg2, Jeg3} as well as \cite[Theorem 1.1]{NX}).
}
\end{Remark}

\subsection{Some heuristic considerations}\label{s:introheu}
\noindent In order to make more transparent the connection between \eqref{eq:generalhighfreq} and the derivatives of the local time of $X$, we present here some heuristic argument. First of all, as already observed, if the function $f$ appearing in \eqref{eq:Jeg} is such that $\int_{\R}f(x)dx=0$, then the right hand side of \eqref{eq:Jeg} is equal to zero, which implies that the normalization $n^{H-1}$ is not adequate for deducing a non trivial limit. Notice that all functions $f$ of the form $f=g^{\prime}$ with $g,g'\in L^1(\R)$ satisfy the property $\int_{\R}f(x)dx=0$ (see indeed Remark \ref{Rk1.5}(a) for a proof), which suggests that, in order to have a non-trivial limit for \eqref{eq:generalhighfreq}, one must distinguish the case where $f$ is the weak derivative of an integrable function or, more generally, the case where it is the weak derivative of order $\ell$ of such a function. With this in mind, for all function $g$ with weak derivatives of order $\ell\geq0$ and all $a,t>0$, we define 
\begin{align}\label{e:gdef}
G_{t,\lettrequivabien,a}^{(n,\ell)}[g]
  &:=\sum_{i=2}^{\lfloor nt\rfloor}g^{(\ell)}(n^{a}(X_{\frac{i-1}{n}}-\lettrequivabien)),
\end{align}
with the convention that the above sum is equal to zero when $nt<2$. We observe that the definition of $G_{t,\lettrequivabien,a}^{(n,\ell)}[g]$ in \eqref{e:gdef} is unambiguously given, even if the weak derivative $g^{(\ell)}$ is only defined up to sets of zero Lebesgue measure, since the argument $n^a ( X_{\frac{i-1}{n}}-\lettrequivabien ) $ is a random variable whose distribution has a density, for every $i>1$. Now, at a purely heuristic level, we can write
\begin{align*}
\frac{1}{n}G_{t,\lettrequivabien,a}^{(n,\ell)}[g]
  &=\frac{1}{n}\sum_{i=2}^{\lfloor nt\rfloor}g^{(\ell)}(n^{a}(X_{\frac{i-1}{n}}-\lettrequivabien))\approx\int_{0}^tg^{(\ell)}(n^{a}(X_{s}-\lettrequivabien))ds\\
	&=\int_{0}^t\int_{\R}g^{(\ell)}(n^{a}x)\delta_{0}(X_{s}-\lettrequivabien-x)dxds
	=n^{-a}\int_{0}^t\int_{\R}g^{(\ell)}(x)\delta_{0}\left(X_{s}-\lettrequivabien-\frac{x}{n^a}\right)dxds,
\end{align*}
which, by a formal integration by parts, yields
\begin{align}\label{eq:approxGinformal}
\frac{1}{n}G_{t,\lettrequivabien,a}^{(n,\ell)}[g]
  &\approx n^{-a(\ell+1)}\int_{0}^t\int_{\R}g(x)\delta_{0}^{(\ell)}(X_{s}-\lettrequivabien-\frac{x}{n^a})dxds \approx n^{-a(\ell+1)}\bigg(\int_{\R}g(x)dx\bigg)L_{t}^{(\ell)}(\lettrequivabien),
\end{align}
where the random variable $L_{t}^{(\ell)}(\lettrequivabien)$ (when it exists) is given by
\begin{align*}
L_{t}^{(\ell)}(\lettrequivabien)
  &:=\int_{0}^t\delta_{0}^{(\ell)}(X_{s}-\lettrequivabien)ds.
\end{align*}
From here we can conjecture that under suitable hypotheses, the sequence $n^{a(\ell+1)-1}G_{t,\lettrequivabien,a}^{(n,\ell)}[g]$ converges to a scalar  multiple of $L_{t}^{(\ell)}(\lettrequivabien)$. However, one should observe that the approximation \eqref{eq:approxGinformal} can only be true under special conditions, in order for it to be consistent with the results evoked in Section \ref{ss:introfrac}. One should also notice that the above heuristic is based on the use of the generalized function $\delta_{0}^{(\ell)}$, which makes computations very hard to be rigorously justified. To overcome this difficulty, we use the Fourier representation $\delta_{0}$, and the Fourier inversion formula to rewrite $\frac{1}{n}G_{t,\lettrequivabien,a}^{(n,\ell)}[g]$ and $\int_{0}^t\delta_{0}^{(\ell)}(X_{s}-\lettrequivabien)ds$ as a mixture of terms of the form $e^{\textbf{i}\xi X_{s}}$ and $e^{\textbf{i}\xi X_{\frac{i}{n}}}$. Such a representation will facilitate algebraic manipulations, in view of the Gaussianity of $X$.\\

\noindent In addition to the verification of the conjecture above, on the limit of $n^{a(\ell+1)-1}G_{t,\lettrequivabien,a}^{(n,\ell)}[g]$, it is interesting to address the following natural problems arising from the approximation \eqref{eq:approxGinformal}.
%error associated to the convergence $n^{a(\ell+1)-1}G_{t,\lettrequivabien,a}^{(n,\ell)}[g]$ towards its limit

 %\eqref{eq:Jeg}, and allow the scalings $n^{H}$ and $n^{H-1}$ to be replaced by a more general normalization of the form $n^{a}$ and $n^{a-1}$, with $a>0$. These observations motivate the study of the following problems: if $g:\R\rightarrow\R$ is a suitable test function, $a,t>0$ and $G_{t,\lettrequivabien,a}^{(n,\ell)}[g]$ is defined by
%\begin{align*}
%G_{t,\lettrequivabien,a}^{(n,\ell)}[g]
  %&:=\sum_{i=2}^{\lfloor nt\rfloor}g^{(\ell)}(n^{a}(X_{\frac{i-1}{n}}-\lettrequivabien)),
%\end{align*}
%with the convention that the above sum is equal to zero when $nt<2$,
\begin{enumerate}
\item[(i)] Provided that $\{n^{a(\ell+1)-1}G_{t,\lettrequivabien,a}^{(n,\ell)}[g]\ \processsymb\ t\geq0\}$ has a non-trivial limit, what is the nature of the fluctuations of $G_{t,\lettrequivabien,a}^{(n,\ell)}[g]$ around such a limit?

\item[(ii)] If we are interested in estimating $L_{t}^{(\ell)}(\lettrequivabien)$ with a suitable normalization of the statistic $\{G_{t,\lettrequivabien,a}^{(n,\ell)}[g]\ :\ t\geq0\}$, how do we choose $a$ in order to minimize the associated mean-square error? 
\end{enumerate}
The behavior of $n^{a(\ell+1)-1}G_{t,\lettrequivabien,a}^{(n,\ell)}[g]$ will be described in Theorem \ref{thm:main2}, while the answer to (i) and (ii) will be provided in Theorem \ref{thm:main2p} and Remark \ref{rem:main}, respectively. Before presenting the precise statement of our results, we need to introduce some further notation and definitions. 

\subsection{Further notation}\label{ss:notation} Let $\ell\geq0$ be a non-negative integer satisfying $H<\frac{1}{2\ell+1}$. By Lemma \ref{Lem:existLocalderiv} in the Appendix, for every fixed $\lettrequivabien\in\R$, the 
collection of processes 
\begin{align}\label{eq:mollifierderivlt}
\left\{\int_{0}^{t}\phi_{\varepsilon}^{(\ell)}(X_{s}-\lettrequivabien)ds\ \processsymb\ t\geq0\right\}
\end{align}
converges pointwise in $L^{2}(\Omega)$ as $\varepsilon $ goes to zero, to a limit process $\{L_{t}^{(\ell)}(\lettrequivabien)\processsymb\ t\geq0\}$ that has a modification with H\"older continuous trajectories (in the variable $t$) of order $\gamma$, for all $0<\gamma<1-H(\ell+1)$. In this paper, the resulting continuous modification is called the {\it derivative} of order $\ell$ of the local time of $X$ at $\lettrequivabien$. Further properties for the process $L_{t}^{(\ell)}(\lettrequivabien)$ will be presented in Section \ref{sec:Fourierderivlocal}

\begin{Remark}
{\textup {The variable $L_{t}^{(0)}(\lettrequivabien) = L_t(\lettrequivabien)$ is of course the local time of $X$ at $\lettrequivabien$ and it has been widely studied (see \cite{CoNuTu, Ber, GerHor, HuOk}). {\color{black} The forthcoming Lemma \ref{Lem:existLocalderiv} provides a range of values of $H$ for which the variable $L_{t}^{(\ell)}(\lettrequivabien)$ exists as the limit in $L^{2}(\Omega)$ of a suitable mollification, for $\ell\geq 1$ (see also \cite[Section~28]{GerHor}). In addition to this existence result, in Lemma \ref{lem:nonexistenceLell} we prove that the condition $H<\frac{1}{2\ell+1}$ is sharp, in the sense that if $H\geq \frac{1}{2\ell+1}$, then $\int_{0}^{t}\phi_{\varepsilon}^{(\ell)}(X_{s})ds$ doesn't converge in $L^2(\Omega)$. Finally, in Lemma \ref{lem:nonexistenceLell} we prove that  $L_{t}^{(\ell)}(\lettrequivabien)$ can be regarded as the space derivative of $L_{t}^{(\ell-1)}(\lettrequivabien)$ with respect to the $L^{2}(\Omega)$-topology; this observation \blue{yields that the random mapping $ \lambda \mapsto L_{t}^{(\ell)}(\lettrequivabien)$ introduced above coincides --- up to the choice of a suitable modification --- with the definition of the a.s. spatial derivative of the local time of $X$ used in \cite[Section~28]{GerHor}}.}}}
\end{Remark} 

  For $r\in\N$ and $p\geq 1$, we will denote by $W^{r,p}$ the set of functions $g:\R\rightarrow\R$ with weak derivatives of order $r$, such that $g^{(i)}\in L^{p}(\R)$ for all $i=0,\dots, r$. We will endow $W^{r,p}$ with the norm $\|\cdot\|_{W^{r,p}}$ given by
\begin{align*}
\|g\|_{W^{r,p}}
  &:=\bigg(\sum_{i=0}^{r}\int_{\R}|g^{(i)}(x)|^{p}dx\bigg)^{\frac{1}{p}}.
\end{align*} 
Define the function $w:\R\rightarrow\R_{+}$ by
\begin{align*}
 w(x):=1+|x|.
\end{align*}
For a given $\kappa>0$ and a non-negative integer $\ell\geq 0$, we will denote by  $\mathcal{K}_{\kappa}^{\ell}$ the collection
\begin{align*}
\mathcal{K}_{\kappa}^{\ell}
  &:=\{g\in W^{\ell,1}\ |\  w^{\kappa}g\in L^{1}(\R)\ \text{ and } g^{(\ell)}\in L^{2}(\R)\}.
\end{align*}
%In Lemma \ref{Lem:existLocalderiv} at the appendix, it is proved that if $H<\frac{1}{2\ell+1}$ and $0<\gamma<1-H(\ell+1)$, the limit 
%\begin{align*}
%L_{t}^{(\ell)}(\lettrequivabien)
  %:=\lim_{\varepsilon\rightarrow0}\int_{0}^{t}\phi_{\varepsilon}^{(\ell)}(X_{s}-\lettrequivabien)ds
%\end{align*}
%is well defined, and the process $\{L_{t}^{(\ell)}(\lettrequivabien)\ ;\ t\geq0\}$ has a modification with H\"older trajectories of order $\gamma$. We will refer to this limit as the derivative of order $\ell$ of the local time of $X$ at $\lettrequivabien$.

\noindent Finally, for a fixed function $g\in W^{\ell,1}$ and a positive constant $a>0$, we set
\begin{align*}
\mu[g]
  &:=\int_{\R}g(x)dx.
\end{align*}

\begin{Remark}{\rm For every $\kappa>0$, one has clearly that $\mathcal{K}_{\kappa}^{0} \subset L^1(\R)\cap L^2(\R)$, where the inclusion is strict. This implies that our forthcoming statements (in particular, relation \eqref{eq:thmmain22} in the case $\ell = 0$) contain a version of Jeganathan's limit theorem \eqref{eq:Jeg} under slightly more stringent assumptions. Nonetheless, we stress that \eqref{eq:Jeg} is a purely qualitative statement, whereas the forthcoming estimate \eqref{eq:thmmain22} also displays an explicit upper bound on the mean-square difference between the two terms.
}
\end{Remark}

\subsection{Main results}
We now present one of the main results of the paper, which is a functional law of large numbers for $\{G_{t,\lettrequivabien,a}^{(n,\ell)}[g]\ \processsymb\ t\geq0\}$.
%===========================================================
\begin{Theorem}\label{thm:main2}
If $0<a\leq H<\frac{1}{2\ell+1}$ and $\kappa\in(0,\frac{1}{2})$ is such that $H(2\ell+2\kappa+1)<1$, then for all $g\in \mathcal{K}_{\kappa}^{\ell}$  there exists a constant $C_t>0$ independent of $n$ and $g$, such that 
\begin{align}\label{eq:thmmain22}
\| n^{a(\ell+1)-1}G_{t,\lettrequivabien,a}^{(n,\ell)}[g]-L_{t}^{(\ell)}(\lettrequivabien)\mu[g] \|_{L^{2}(\Omega)}
  &\leq C_tn^{-(2a\kappa)\wedge \kappa}(\|w^{\kappa}g\|_{L^{1}(\R)}+\|g^{(\ell)}\|_{L^{2}(\R)}^2).
\end{align}
In addition, the processes $\{G_{t,\lettrequivabien,a}^{(n,\ell)}[g]\ \processsymb\ t\geq0\} $ satisfy the following functional convergences:
\begin{enumerate}
\item[(i)] If $\ell=0$ and $T>0$, 
\begin{align}\label{eq:ucpmain0}
\sup_{t\in[0,T]}|n^{a-1}G_{t,\lettrequivabien,a}^{(n,0)}[g]-L_{t}^{(0)}(\lettrequivabien)\mu[g]|
  &\stackrel{\mathbb{P}}{\rightarrow}0
	\quad\quad\quad  \text{ as }\  n\rightarrow\infty.
\end{align}

\item[(ii)] If  $\ell\geq 1$ and $T_{1},T_{2}>0$, then 
\begin{align}\label{eq:ucpmain}
\sup_{t\in[T_1,T_2]}|n^{a(\ell+1)-1}G_{t,\lettrequivabien,a}^{(n,\ell)}[g]-L_{t}^{(\ell)}(\lettrequivabien)\mu[g]|
  &\stackrel{\mathbb{P}}{\rightarrow}0
	\quad\quad\quad  \text{ as }\  n\rightarrow\infty.
\end{align}

\item[(iii)] If $\ell\geq 1$, $H<\frac{1}{2\ell+2}$  and $T>0$, then 
\begin{align}\label{eq:ucpmain2}
\sup_{t\in[0,T]}|n^{a(\ell+1)-1}G_{t,\lettrequivabien,a}^{(n,\ell)}[g]-L_{t}^{(\ell)}(\lettrequivabien)\mu[g]|
  &\stackrel{\mathbb{P}}{\rightarrow}0
	\quad\quad\quad  \text{ as }\  n\rightarrow\infty.
\end{align}
\end{enumerate}
\end{Theorem}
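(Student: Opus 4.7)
The plan is to reduce everything to Fourier analysis on the Gaussian fBm $X$, along the lines of the heuristic argument of Section \ref{s:introheu}. I would first prove the quantitative $L^{2}$ estimate \eqref{eq:thmmain22}, and then lift it to uniform convergence via a Kolmogorov tightness argument. Using the Fourier inversion formula $g^{(\ell)}(y)=\frac{1}{2\pi}\int_{\R}(\mathbf{i}\xi)^{\ell}\widehat{g}(\xi)e^{\mathbf{i}\xi y}d\xi$ and the rescaling $\xi\mapsto n^{-a}\xi$, the left-hand side of \eqref{eq:thmmain22} can be rewritten as
\[
n^{a(\ell+1)-1}G_{t,\lettrequivabien,a}^{(n,\ell)}[g]=\frac{1}{2\pi n}\int_{\R}(\mathbf{i}\xi)^{\ell}\widehat{g}(n^{-a}\xi)e^{-\mathbf{i}\xi\lettrequivabien}\sum_{i=2}^{\lfloor nt\rfloor}e^{\mathbf{i}\xi X_{(i-1)/n}}d\xi,
\]
while $L_{t}^{(\ell)}(\lettrequivabien)\mu[g]$ admits, through the mollified approximation \eqref{eq:mollifierderivlt} and Lemma \ref{Lem:existLocalderiv}, an analogous Fourier representation with integrand $(\mathbf{i}\xi)^{\ell}\widehat{g}(0)e^{-\mathbf{i}\xi\lettrequivabien}\int_{0}^{t}e^{\mathbf{i}\xi X_{s}}ds$. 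Subtracting, the error splits naturally into two pieces: one measuring the smoothness of $\widehat{g}$ at the origin via the H\"older-type estimate $|\widehat{g}(n^{-a}\xi)-\widehat{g}(0)|\leq C\,n^{-a\kappa}|\xi|^{\kappa}\|w^{\kappa}g\|_{L^{1}(\R)}$ (which follows from $w^{\kappa}g\in L^{1}(\R)$), and one measuring the Riemann-sum discretization error $\frac{1}{n}\sum_{i}e^{\mathbf{i}\xi X_{(i-1)/n}}-\int_{0}^{t}e^{\mathbf{i}\xi X_{s}}ds$, which contributes an additional $n^{-\kappa}$ factor.

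To bound the $L^{2}(\Omega)$ norm of each piece I would expand the square, producing a double Fourier integral weighted by the Gaussian covariance $\mathbb{E}[e^{\mathbf{i}\xi X_{s}-\mathbf{i}\eta X_{t}}]=\exp\big(-\tfrac{1}{2}(\xi^{2}s^{2H}+\eta^{2}t^{2H}-2\xi\eta R(s,t))\big)$, where $R(s,t)=\tfrac{1}{2}(s^{2H}+t^{2H}-|t-s|^{2H})$. The low-frequency regime $|\xi|\leq n^{a}$ is handled using the cancellation $\widehat{g}(n^{-a}\xi)\approx\widehat{g}(0)=\mu[g]$, producing a contribution of order $n^{-2a\kappa}\|w^{\kappa}g\|_{L^{1}(\R)}^{2}$; the high-frequency regime $|\xi|>n^{a}$ is controlled via Plancherel and $\|g^{(\ell)}\|_{L^{2}(\R)}$, yielding a contribution of order $n^{-\kappa}\|g^{(\ell)}\|_{L^{2}(\R)}^{2}$. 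Combining the two regimes produces the stated rate $n^{-(2a\kappa)\wedge\kappa}$, while integrating the Gaussian kernels against power weights in $(s,t,\xi,\eta)$ delivers the $t$-dependent constant $C_{t}$.

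For the functional conclusions I would combine this pointwise $L^{2}$ bound with an analogous estimate for increments,
\[
\|E_{t_{2}}^{(n)}-E_{t_{1}}^{(n)}\|_{L^{2}(\Omega)}\leq C\,|t_{2}-t_{1}|^{\gamma},
\]
where $E_{t}^{(n)}:=n^{a(\ell+1)-1}G_{t,\lettrequivabien,a}^{(n,\ell)}[g]-L_{t}^{(\ell)}(\lettrequivabien)\mu[g]$. Such an increment bound follows by applying the same Fourier representation to the partial sum between $\lfloor nt_{1}\rfloor$ and $\lfloor nt_{2}\rfloor$, together with the H\"older continuity of $t\mapsto L_{t}^{(\ell)}(\lettrequivabien)$ recalled in Section \ref{ss:notation}. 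Kolmogorov's continuity criterion then delivers tightness, and hence UCP convergence. The three cases reflect the H\"older exponent $1-H(\ell+1)$ of $L_{t}^{(\ell)}(\lettrequivabien)$ near $t=0$: for $\ell=0$ the inequality $1-H>1/2$ holds automatically, yielding (i); for $\ell\geq 1$ with a general $H<1/(2\ell+1)$ the exponent may drop below $1/2$, so tightness is obtained only on intervals bounded away from zero, giving (ii); and the stronger assumption $H<1/(2\ell+2)$ of (iii) is exactly what restores $1-H(\ell+1)>1/2$, extending tightness down to $t=0$.

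The main obstacle will be the sharpness of the $L^{2}$ estimate. On the near-diagonal regime $|s-t|\sim 1/n$ the Gaussian kernel $\exp(-\tfrac{1}{2}|\xi\pm\eta|^{2}|s-t|^{2H})$ provides very little decay in the Fourier variables, so one has to rely on local non-determinism of fBm and exploit the cancellation in $\widehat{g}(n^{-a}\xi)-\widehat{g}(0)$ to avoid losing additional powers of $n$. Extracting the optimal exponent $(2a\kappa)\wedge\kappa$, in particular understanding the transition between its two branches (the $\kappa$ branch becoming dominant when $a\geq 1/2$, which can only occur for $\ell=0$), requires a careful interpolation between the weighted $L^{1}(\R)$ norm of $g$ and the $L^{2}(\R)$ norm of $g^{(\ell)}$ along the spectral decomposition introduced above.
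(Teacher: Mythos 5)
Your Fourier-analytic plan for the quantitative bound \eqref{eq:thmmain22} is essentially the paper's: the split into a piece controlled by $\widehat{g}(n^{-a}\xi)-\widehat{g}(0)$ (giving $n^{-2a\kappa}\|w^{\kappa}g\|_{L^{1}(\R)}^{2}$ via $w^{\kappa}g\in L^{1}(\R)$) and a Riemann-sum/discretization piece matches the paper's decomposition into the terms $R_{t,3}^{(n)}$ and $R_{t,0}^{(n)},R_{t,1}^{(n)},R_{t,2}^{(n)}$ in \eqref{eq:step2decompwithR1}, and the use of local non-determinism to tame the near-diagonal regime is exactly what Lemmas \ref{lem:techintegralboundasd}--\ref{lem:techintegralbound} do. Your idea of treating the high-frequency regime with Plancherel and $\|g^{(\ell)}\|_{L^{2}(\R)}$ corresponds to the paper's ``undoing'' of the Fourier transform in the near-diagonal term $\tilde{A}_{5}^{(n)}$, so that part of the argument is sound in outline.

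The genuine gap is in the tightness argument, specifically in case (i). Your claim that for $\ell=0$ the inequality $1-H>1/2$ ``holds automatically'' is false: for $\ell=0$ the theorem covers all $H\in(0,1)$, and for $H\geq 1/2$ any Kolmogorov/Billingsley moment criterion based on second-moment increment bounds breaks down near $t=0$, because the estimate of the increments of $R_{t,i}^{(n)}$, $i=1,2,3$, on $[0,T]$ carries the extra singular factor $(u\vee v)^{-H}$ coming from $\mathrm{Var}[X_s]\to 0$, which degrades the exponent to $2-2H(\beta+1)\leq 1$. The paper circumvents this not by moments but by Dini's theorem (Remark \ref{rem:tightness}): writing $g=g_{+}+g_{-}$, the processes $t\mapsto n^{a-1}G_{t,\lettrequivabien,a}^{(n,0)}[g_{\pm}]$ are monotone in $t$ and converge pointwise to continuous limits, hence uniformly. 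Without this (or some substitute) your proof does not yield \eqref{eq:ucpmain0} for $H\geq 1/2$. A secondary issue: a single $L^{2}(\Omega)$ increment bound for the whole error $E_{t}^{(n)}$ is also insufficient for the boundary term $-\mu[g](L_{t}^{(\ell)}(\lettrequivabien)-L_{\lfloor nt\rfloor/n}^{(\ell)}(\lettrequivabien))$ whenever $1-H(\ell+1)\leq 1/2$ (a nonempty range for $\ell\geq 1$); the paper isolates this term and uses the $L^{2p}$ estimate \eqref{eq:momentsofincLt} with $p>\frac{1}{2(1-H(\ell+1))}$, so you should either do the same or invoke the a.s.\ H\"older modification of $L^{(\ell)}$ for that piece separately.
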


\begin{Remark}[Consistency between different values of $\ell$]\label{Rk1.5} {\rm
\begin{itemize}
\item[(a)] If $g\in\mathcal{K}_{\kappa}^{\ell}\subset W^{\ell,1}$, then $g$ and its weak derivatives $g^{(1)},....,g^{(\ell)}$ are integrable. An application of dominated convergence and of Rodrigues' formula for Hermite polynomials consequently yields that, for $ j = 1,..., \ell$,
\begin{align*}
\left|\int_{\R}g^{ (j) }(x)dx\right| 
  & = \lim_{\varepsilon\rightarrow0}\left|\int_{\R}g^{ (j) }(x)e^{-(\varepsilon x)^2/2}dx\right|
	\\ & =\lim_{\varepsilon\rightarrow0}\left|(2\varepsilon)^j \int_{\R}g(x)H_j(\varepsilon x) e^{-(\varepsilon x)^2/2}dx\right|
	\leq 2^j c_j\|g\|_{L^1(\R)}\, \lim_{\varepsilon\rightarrow0}\varepsilon^j =0,
\end{align*}
where $\{ H_j\}$ indicates the sequence of Hermite polynomials, and $$c_j := \sup_{z\in \R} \left| H_j(z) e^{-z^2/2}\right| <\infty .$$

\item[(b)]From the definition of $G_{t,\lettrequivabien,a}^{(n,\ell)}[g]$ it easily follows that, for $g\in\mathcal{K}_{\kappa}^{\ell}\subset W^{\ell,1}$ and for $j=1,..., \ell$,
\begin{align*}
G_{t,\lettrequivabien,a}^{(n,\ell)}[g]
  &=G_{t,\lettrequivabien,a}^{(n,\ell-j )}[g^{(j)}].
\end{align*}
In view of Point {\rm (a)}, such a relation is consistent with the content of Theorem \ref{thm:main2}. Indeed, combining Point (a) with Theorem \ref{thm:main2} we infer that, for $a\leq H < 1/(2\ell+1)$ and $j=1,..., \ell$,
\begin{equation}\label{e:rr}
n^{a(\ell-j +1)-1}G_{t,\lettrequivabien,a}^{(n,\ell - j)}[g^{(j )}] = n^{a(\ell-j +1)-1}G_{t,\lettrequivabien,a}^{(n,\ell)}[g]
\rightarrow    L_{t}^{(\ell)}(\lettrequivabien)\!\! \int_\R g^{(j)}(x) dx = 0,
\end{equation}
as $n\to\infty$, where the convergence takes place in $L^2(\Omega)$; one sees immediately that the relation $n^{a(\ell-j +1)-1}G_{t,\lettrequivabien,a}^{(n,\ell - j)}[g^{(j )}] \to 0$, for every $j=1,...,\ell$, is also a direct consequence of the fact that 
$$
\left\| n^{a(\ell+1)-1}G_{t,\lettrequivabien,a}^{(n,\ell)}[g]-L_{t}^{(\ell)}(\lettrequivabien)\mu[g] \right\|_{L^{2}(\Omega)}\longrightarrow 0,
$$
which one can deduce from Theorem \ref{eq:thmmain22}.
%On the other hand,
%
%
%\begin{align*}
%n^{a(\ell+1)-1}G_{t,\lettrequivabien,a}^{(n,\ell)}[g^{\prime}]
%  \rightarrow L_{t}^{(\ell)}(\lettrequivabien)C_{g^{\prime}}
%	\  \text{ and } \ 
%n^{a(\ell+2)-1}G_{t,\lettrequivabien,a}^{(n,\ell+1)}[g]
%  \rightarrow L_{t}^{\blue{ (\ell +1) }}(\lettrequivabien)C_{g},
%\end{align*}
%for some constants $C_{g^{\prime}},C_{g}\in\R$, then \blue{necessarily} $C_{g^{\prime}}=0$. Thus, in order to have results analogous to Theorem \ref{thm:main2}, where $C_{g^{\prime}}=\mu[g^{\prime}]=\int_{\R}g^{\prime}(x)dx$, the test function $g$ must satisfy 
%\begin{align}\label{eq:consist}
%\int_{\R}g^{\prime}(x)dx=0,
%\end{align}
%which is a property that does not necessarily hold even in the case where $g^{\prime}\in L^{1}(\R)$. 
\end{itemize}
}
\end{Remark}

\begin{Remark}[Regarding uniform convergence in Theorem \ref{thm:main2}]\label{rem:tightness}
\noindent 
\textup{Using Dini's theorem, we can easily check that the pointwise convergence of $\{n^{a -1}G_{t,\lettrequivabien,a}^{(n,0)}[g]\ \processsymb\ t\geq0\}$ towards $\{L_{t}^{(0)}(\lettrequivabien)\mu[g]\ \processsymb\ t\geq0\}$ implies its uniform convergence over compact subsets of $[0,\infty)$. To verify this claim, it suffices to write
\begin{align}\label{eq:tightDini}
n^{a-1}G_{t,\lettrequivabien,a}^{(n,0)}[g]
  =n^{a -1}G_{t,\lettrequivabien,a}^{(n,0)}[g_{+}]-n^{a -1}G_{t,\lettrequivabien,a}^{(n,0)}[g_{-}],
\end{align}
where $g_{+}(x):=g(x)\Indi{(0,\infty)}(g(x))$ and $g_{-}(x):=g(x)\Indi{(-\infty,0]}(g(x))$. Both $n^{2a -1}G_{t,\lettrequivabien,a}^{(n,0)}[(g^{\prime})_{+}]$ and $n^{2a -1}G_{t,\lettrequivabien,a}^{(n,0)}[(g^{\prime})_{-}]$ are increasing in $t$ and converge pointwise in probability to continuous processes, which implies that they converge uniformly, by virtue of Dini's theorem. Unfortunately, this argument does not work in the general case $\ell\geq1$. To check this, consider the test function $g(x):=\exp\{-x^2\}$. In this instance, the analog of the decomposition \eqref{eq:tightDini} is
\begin{align}\label{eq:tightDini2}
n^{2a-1}G_{t,\lettrequivabien,a}^{(n,1)}[g]
  =n^{2a -1}G_{t,\lettrequivabien,a}^{(n,0)}[(g^{\prime})_{+}]-n^{2a -1}G_{t,\lettrequivabien,a}^{(n,0)}[(g^{\prime})_{-}].
\end{align}
Notice that $\mu[g^{\prime}_{-}],\mu[g^{\prime}_{-}]\neq0$, and thus, by inequality \eqref{eq:thmmain22}, the terms $\|n^{2a -1}G_{t,\lettrequivabien,a}^{(n,0)}[(g^{\prime})_{+}]\|_{L^2(\Omega)}$ and $\|n^{2a -1}G_{t,\lettrequivabien,a}^{(n,0)}[(g^{\prime})_{-}]\|_{L^2(\Omega)}$ diverge to infinity while $n^{2a-1}G_{t,\lettrequivabien,a}^{(n,1)}[g]$ converges in $L^{2}(\Omega)$. This prevents us from using the decomposition \eqref{eq:tightDini2} for analyzing the uniform convergence of $n^{2a-1}G_{t,\lettrequivabien,a}^{(n,1)}[g]$. For this reason, instead using Dini's theorem, we tackle the tightness property for the case $\ell\geq 1$ by means of the Billingsley's criterion (see \cite[Theorem~12.3]{Billin}). Due to the high level difficulty of the application of this methodology, we were only able to prove uniform convergence either over compact subsets of $(0,\infty)$ in the general case $H<\frac{1}{2\ell+1}$ or over compact subsets of $[0,\infty)$ in the more restrictive case $H<\frac{1}{2\ell+2}$. We conjecture that the uniform convergence over compact subsets of $[0,\infty)$ in the general case $H<\frac{1}{2\ell+1}$ can eventually be shown by finding a suitable estimation on the moments of arbitrarily large order for the increments of $\{G_{t,\lettrequivabien,a}^{(n,\ell)}[g]\ \processsymb\ t\geq0\}$.}
\end{Remark}

\begin{Remark}
\textup{Theorem \ref{thm:main2} provides a full description of all the possible pointwise limits in probability of the high-frequency observations $\{G_{t,\lettrequivabien,a}^{(n,\ell)}[g]\ \processsymb\ t\geq0\}$, provided that the index $\ell$ associated to $g$ satisfies the condition $H<\frac{1}{2\ell+1}$. However, it is still unclear to us what is its behavior in the case $\ell\geq 1$ and $H>\frac{1}{2\ell+1}$, so we have left this interesting problem open for future research.  }
\end{Remark}

\noindent The following result describes the limit behavior for the error associated with the convergences stated in Theorem \ref{thm:main2}, under suitable conditions on $H$ and $\ell$.

%=========================================================
\begin{Theorem}\label{thm:main2p}
If $0<a\leq H<\frac{1}{2\ell+3}$ and $\kappa\in(0,\frac{1}{2})$ is such that $H(2\ell+2\kappa+3)<1$, then for all $g\in \mathcal{K}^{\ell}_{1+\kappa}$, there exists a constant $C_t>0$ independent of $n$ and $g$, such that  
\begin{multline}\label{eq:thmmain223p}
\|n^a(n^{a(\ell+1)-1}G_{t,\lettrequivabien,a}^{(n,\ell)}[g]-L_{t}^{(\ell)}(\lettrequivabien)\mu[g])+ L_{t}^{(\ell +1)}(\lettrequivabien)\mu[\tilde{g}]\|_{L^{2}(\Omega)}\\
  \leq C_tn^{-2a\kappa}(\|w^{1+\kappa}g\|_{L^{1}(\R)}+\|g\|_{W^{2,1}} +\|g^{(\ell)}\|_{L^{2}(\R)}),
\end{multline}
where $\tilde{g}(x):=xg(x)$. In addition, the  processes $\{G_{t,\lettrequivabien,a}^{(n,\ell)}[g]\ \processsymb\ t\geq0\} $ satisfy the following functional convergences:

\begin{enumerate}
\item[(i)] If  $T_{1},T_{2}>0$, then 
\begin{align}\label{eq:ucpmainp}
\sup_{t\in[T_1,T_2]}|n^a(n^{a(\ell+1)-1}G_{t,\lettrequivabien,a}^{(n,\ell)}[g]-L_{t}^{(\ell)}(\lettrequivabien)\mu[g])+ L_{t}^{(\ell+1)}(\lettrequivabien)\mu[\tilde{g}]|
  &\stackrel{\mathbb{P}}{\rightarrow}0
	\quad\quad\quad  \text{ as }\  n\rightarrow\infty.
\end{align}

\item[(ii)] If $H<\frac{1}{2(\ell+2)}$  and $T>0$, then 
\begin{align}\label{eq:ucpmain2p}
\sup_{t\in[0,T]}|n^a(n^{a(\ell+1)-1}G_{t,\lettrequivabien,a}^{(n,\ell)}[g]-L_{t}^{(\ell)}(\lettrequivabien)\mu[g])+ L_{t}^{(\ell+1)}(\lettrequivabien)\mu[\tilde{g}]|
  &\stackrel{\mathbb{P}}{\rightarrow}0
	\quad\quad\quad  \text{ as }\  n\rightarrow\infty.
\end{align}
\end{enumerate}
\end{Theorem}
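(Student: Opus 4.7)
The plan is to refine the heuristic expansion \eqref{eq:approxGinformal} by retaining one further order in the Taylor development of the Fourier multiplier that appears after a Fourier inversion of $g^{(\ell)}$. Following Section \ref{s:introheu}, I would represent
$$G_{t,\lettrequivabien,a}^{(n,\ell)}[g]=\frac{1}{2\pi}\sum_{i=2}^{\nt}\int_{\R}(\ib\xi)^{\ell}\hat{g}(\xi)e^{\ib\xi n^{a}(X_{(i-1)/n}-\lettrequivabien)}d\xi,$$
and, via the change of variables $\eta=n^{a}\xi$, rewrite $n^{a(\ell+1)-1}G_{t,\lettrequivabien,a}^{(n,\ell)}[g]$ as a Riemann sum of the form $\frac{1}{2\pi n}\sum_{i}\int(\ib\eta)^{\ell}\hat{g}(\eta/n^{a})e^{\ib\eta(X_{(i-1)/n}-\lettrequivabien)}d\eta$. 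The new analytical input (compared to Theorem \ref{thm:main2}) is the second-order Taylor expansion
$$\hat{g}(\eta/n^{a})=\mu[g]-\ib\,\mu[\tilde{g}]\,\eta/n^{a}+R_{n}(\eta),\qquad |R_{n}(\eta)|\leq Cn^{-a(1+\kappa)}|\eta|^{1+\kappa}\|w^{1+\kappa}g\|_{L^{1}(\R)},$$
which holds under $g\in\mathcal{K}_{1+\kappa}^{\ell}$. Inserting this expansion and recognising the Fourier representations of $L_{t}^{(\ell)}(\lettrequivabien)$ and $L_{t}^{(\ell+1)}(\lettrequivabien)$ obtained as the $L^{2}$-limit of \eqref{eq:mollifierderivlt} (note that $L^{(\ell+1)}$ is well defined under the hypothesis $H<\frac{1}{2\ell+3}$ by Lemma \ref{Lem:existLocalderiv}), the quantity inside the $L^{2}$-norm in \eqref{eq:thmmain223p} decomposes into three pieces: (i) $n^{a}$ times a Riemann-sum-minus-integral error for $\mu[g]L_{t}^{(\ell)}(\lettrequivabien)$; (ii) a Riemann-sum-minus-integral error for $\mu[\tilde{g}]L_{t}^{(\ell+1)}(\lettrequivabien)$; and (iii) $n^{a}$ times a remainder term carrying the factor $R_{n}(\eta)$.

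\textbf{$L^{2}$ estimate.} For each of these three pieces I would square, take expectation, and exploit the Gaussianity of $X$ to reduce the calculation to a double integral of the form
$$\int_{[0,t]^{2}}\!\!\int_{\R^{2}}|\eta|^{\ell+\alpha}|\eta'|^{\ell+\alpha'}\exp\!\left\{-\tfrac12\bigl(s^{2H}\eta^{2}-2r(s,s')\eta\eta'+(s')^{2H}(\eta')^{2}\bigr)\right\}d\eta\,d\eta'\,ds\,ds',$$
with $r(s,s')=\tfrac12(s^{2H}+(s')^{2H}-|s-s'|^{2H})$ and $(\alpha,\alpha')\in\{0,1,1+\kappa\}^{2}$ according to the piece under consideration. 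Performing the Gaussian integration in $(\eta,\eta')$ produces a singular kernel that, using the sharp estimate $\det(\Sigma_{s,s'})\asymp|s-s'|^{2H}(s\wedge s')^{2H}$ for the covariance of $(X_{s},X_{s'})$, is integrable over $[0,t]^{2}$ precisely because $H(2\ell+2\kappa+3)<1$. Keeping track of the prefactors (the $n^{a}$ sitting in front of pieces (i) and (iii); the factor $n^{-a(1+\kappa)}$ carried by $R_{n}$; and the Riemann-sum gains obtained by telescoping the differences $e^{\ib\eta X_{(i-1)/n}}-e^{\ib\eta X_{s}}$ for $s\in[\tfrac{i-1}{n},\tfrac{i}{n}]$) produces the announced rate $n^{-2a\kappa}$ together with the three norms $\|w^{1+\kappa}g\|_{L^{1}(\R)}$, $\|g\|_{W^{2,1}}$ and $\|g^{(\ell)}\|_{L^{2}(\R)}$ appearing in the right-hand side of \eqref{eq:thmmain223p}.

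\textbf{Lifting to functional convergence.} To deduce (i) and (ii), I would apply the $L^{2}$-bound \eqref{eq:thmmain223p} to the increments of the left-hand side between two times $t_{1}<t_{2}$ (combined with the H\"older continuity of $L^{(\ell)}$ and $L^{(\ell+1)}$ recalled just before Section \ref{ss:notation}), thus obtaining moment estimates of the form $\E[|\Delta^{(n)}_{t_{1},t_{2}}|^{2}]\leq C|t_{2}-t_{1}|^{\alpha}$ and concluding tightness in $C([T_{1},T_{2}])$ (or $C([0,T])$) via Billingsley's criterion \cite[Theorem~12.3]{Billin}. As already noted in Remark \ref{rem:tightness}, the dichotomy between (i) (intervals bounded away from zero) and (ii) (intervals including zero) reflects an extra degeneracy of $\det(\Sigma_{s,s'})$ as $s,s'\to 0$, which can be absorbed in the tightness bound only under the stronger condition $H<\frac{1}{2(\ell+2)}$; this threshold is the natural analogue of the one in Theorem \ref{thm:main2} when $\ell$ is replaced by $\ell+1$.

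\textbf{Main obstacle.} The most delicate step will be ensuring that the three $L^{2}$-bounds from the second step are \emph{simultaneously} of order $n^{-2a\kappa}$: the Gaussian integration in $(\eta,\eta')$ produces a kernel with singularities both along the diagonal $s=s'$ and as $s\wedge s'\to 0$, and the Riemann-sum contributions (pieces (i) and (ii)) must be organised as telescoping/finite-difference estimates exploiting the \emph{sharp} H\"older regularity of $L^{(\ell)}$ and $L^{(\ell+1)}$, while piece (iii) requires the full strength of the weighted integrability $\|w^{1+\kappa}g\|_{L^{1}(\R)}$. Coordinating these competing bounds so that the test-function norms appear additively on the right-hand side of \eqref{eq:thmmain223p}, and so that no single piece is worse than $n^{-2a\kappa}$, is where the combinatorial bookkeeping becomes genuinely intricate.
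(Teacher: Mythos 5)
Your overall architecture matches the paper's: Fourier inversion of $g^{(\ell)}$, a second-order Taylor expansion of $\hat g(\xi/n^a)$ around $0$ (equivalently, writing $e^{-\ib\xi y/n^a}-1=(e^{-\ib\xi y/n^a}-1+\ib\xi y/n^a)-\ib\xi y/n^a$ under the integral against $g(y)\,dy$, which is exactly how the paper produces the term $\mu[\tilde g]L_t^{(\ell+1)}(\lettrequivabien)$ and the remainder $R_{t,4}^{(n)}$), followed by second-moment computations via Gaussian integration, local non-determinism, and Billingsley's criterion for tightness. The treatment of the boundary terms, of the weighted-remainder piece carrying $\|w^{1+\kappa}g\|_{L^1(\R)}$, and the explanation of the two thresholds $H<\frac{1}{2\ell+3}$ and $H<\frac{1}{2(\ell+2)}$ are all consistent with the paper.

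There is, however, a genuine gap in your plan for piece (i), the ``Riemann-sum-minus-integral error'' for $\mu[g]L_t^{(\ell)}(\lettrequivabien)$, which after the decomposition \eqref{eq:sumofexponentials} contains the conditionally centered increment term $R_{t,1}^{(n)}$ built from $e^{-\frac12\xi^2\sigma_{i,s}^2}-e^{\ib\xi\Delta_{i,s}X}$. You propose to treat it by squaring, taking expectations and ``telescoping the differences $e^{\ib\eta X_{(i-1)/n}}-e^{\ib\eta X_s}$''. A direct second-moment computation of this term bounds the relevant difference of Gaussian characteristic functions by $|\vec\xi^*(\Sigma(z,1)-\Sigma(z,0))\vec\xi|^{\alpha}$ with $\alpha\le 1$ (the elementary inequality $|e^{-x}-e^{-y}|\le|x-y|^{\alpha}(e^{-x}+e^{-y})$ fails for $\alpha>1$), and the covariance perturbation is genuinely of order $n^{-2H}$, not smaller. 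Hence the best this route yields is $\|R_{t,1}^{(n)}\|_{L^2(\Omega)}^2\lesssim n^{-2H}$, so that $n^{2a}\|R_{t,1}^{(n)}\|_{L^2(\Omega)}^2\lesssim n^{2a-2H}=O(1)$: after the multiplication by $n^a$ required in \eqref{eq:thmmain223p} you do not even get convergence to zero, let alone the rate $n^{-2a\kappa}$. The Hölder regularity of $L^{(\ell)}$ does not help here, since this piece is an oscillatory sum over $i$ of within-interval fluctuations, not a boundary term. The paper closes exactly this gap with Malliavin calculus: using \eqref{eq:Fasdeltaq} it rewrites $e^{\ib\xi(\cdot)}(e^{\ib\xi\Delta_{i,s}X}-e^{-\frac12\xi^2\sigma_{i,s}^2})$ as a Skorohod integral $\delta(\cdot)$ plus a correction carrying the explicit small factor $\langle\Indi{[\frac{i-1}{n},s]},\Indi{[0,\frac{i-1}{n}]}\rangle_{\Hg}=O(n^{-2H})$, and then bounds the Skorohod term by the duality $\E[\delta(u)^2]=\E[\langle u,D\delta(u)\rangle_{\Hg}]$, which brings in inner products of indicators of short disjoint intervals decaying like $n^{-2}|i-j|^{2H-2}$. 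This integration by parts is the mechanism that extracts the extra factor of $n^{-2a}$; without it, or some substitute for it, your piece (i) cannot be controlled at the claimed order.
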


\begin{Remark}
\textup{In the case where $G(x):=\int_{-\infty}^{x}g(y)dy$ satisfies $G(x)=o(x)$ as $|x|$ approaches to infinity, we have that 
$$\int_{\R}G(y)dy=-\int_{\R}yg(y)dy.$$
Thus, in this situation we can obtain Theorem \ref{thm:main2p} from Theorem \ref{thm:main2}, by replacing $\ell$ by $\ell+1$ and $g$ by $G$. Notice however that there are examples of test functions $g$ satisfying $\mu[g],\mu[\tilde{g}]\neq0$ (take for instance $g(x):=(x+1)\phi_1(x)$), in such a way the above argument does not provide an equivalence between Theorems \ref{thm:main2p} and  \ref{thm:main2}.}
\end{Remark}

As anticipated, the next statement shows that Theorem \ref{thm:main2p} in the case $\ell = 0$ yields a second order counterpart to Jenagathan's result \eqref{eq:Jeg}, in the range $0<H<\frac13$. The case $H\geq \frac13$ is outside the scope of the techniques developed in the present paper: we prefer to think about this issue as a separate problem, and leave it open for future research.

\begin{cor} \label{c:cimbalon}Fix $0<H<\frac13$, consider $\kappa>0$ such that $\kappa < \frac12(H^{-1} - (2\ell+3)^{-1})$, and fix $f\in K_{\kappa}^0$ such that $\mu[f], \mu[\tilde{f}]\neq 0$. Then, the convergence \eqref{eq:Jeg} takes place for every $0<a\leq H$, and moreover, as $n\to\infty$,
$$
n^a\left( n^{a-1}\sum_{i=1}^{\lfloor nt\rfloor}f(n^{a}(X_{\frac{i-1}{n}}-\lettrequivabien))
   - L_{t}(\lettrequivabien)\int_{\R}f(x)dx\right) \stackrel{L^2(\Omega)}{\longrightarrow} \mu[\tilde{f}]L_t^{(1)}(\lettrequivabien).
$$
\end{cor}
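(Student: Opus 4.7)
The plan is to deduce the statement as a direct specialization of Theorem \ref{thm:main2p} to $\ell=0$ and $g=f$. All of the hard work (Malliavin calculus, Fourier analysis, and the sharp regularity theory for $L^{(1)}$ established in Section \ref{sec:Fourierderivlocal}) has already been performed upstream, so the proof will reduce to a routine verification of hypotheses plus a bit of bookkeeping.

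First I would check that $f$ satisfies the hypotheses of Theorem \ref{thm:main2p}. With $\ell=0$, the theorem requires $g\in \mathcal{K}^{0}_{1+\kappa'}$ for some $\kappa'\in(0,1/2)$ with $H(3+2\kappa')<1$. Under the corollary's assumptions, $f\in\mathcal{K}^{0}_{\kappa}$ and, setting $\kappa':=\kappa-1$ (positive in the stated regime), one has $w^{1+\kappa'}f=w^{\kappa}f\in L^{1}(\R)$ together with $f\in L^{2}(\R)$; the upper bound on $\kappa$ assumed in the corollary forces $\kappa'$ to lie in the admissible range for Theorem \ref{thm:main2p}, using $H<1/3$ to check $\kappa'<1/2$.

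Next, Theorem \ref{thm:main2p} applied with $\ell=0$ yields directly
$$
\bigl\|\,n^{a}\bigl(n^{a-1}G^{(n,0)}_{t,\lettrequivabien,a}[f]-L_{t}(\lettrequivabien)\mu[f]\bigr)+L^{(1)}_{t}(\lettrequivabien)\mu[\tilde{f}]\,\bigr\|_{L^{2}(\Omega)}\le C_{t}\,n^{-2a\kappa'}\longrightarrow 0 .
$$
A minor bookkeeping point will be that the sum in the corollary runs from $i=1$ while the definition \eqref{e:gdef} of $G^{(n,0)}_{t,\lettrequivabien,a}[f]$ starts at $i=2$; the missing contribution is the deterministic quantity $f(-n^{a}\lettrequivabien)$, and since $0<a\le H<1/3<1/2$, both normalizations $n^{a-1}$ and $n^{2a-1}$ drive this constant to zero (for fixed $\lettrequivabien\in\R$ and a fixed pointwise representative of $f$). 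This will yield exactly the second $L^{2}(\Omega)$ convergence claimed in the corollary.

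For the first assertion --- Jeganathan's convergence \eqref{eq:Jeg} for every $0<a\le H$ --- I would either invoke Theorem \ref{thm:main2}(i) with $\ell=0$ (whose hypotheses are weaker and automatically satisfied since $\mathcal{K}^{0}_{\kappa}\subset L^{1}(\R)\cap L^{2}(\R)$), or simply divide the displayed inequality by $n^{a}$ and use $\|L^{(1)}_{t}(\lettrequivabien)\mu[\tilde{f}]\|_{L^{2}(\Omega)}/n^{a}\to 0$. The main obstacle throughout will be purely clerical rather than conceptual: carefully aligning the precise numerical threshold on $\kappa$ announced in the corollary with the constraint $H(3+2\kappa')<1$ coming from Theorem \ref{thm:main2p}, and disposing of the deterministic $i=1$ boundary term; no genuine analytic difficulty remains after Theorem \ref{thm:main2p} is invoked.
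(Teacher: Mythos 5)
Your overall strategy is exactly the paper's: Corollary \ref{c:cimbalon} is presented there as an immediate specialization of Theorem \ref{thm:main2p} to $\ell=0$, $g=f$, with no further argument, so reading off the displayed $L^{2}(\Omega)$ estimate and dividing by $n^{a}$ to recover \eqref{eq:Jeg} is precisely what is intended; your treatment of the $i=1$ boundary term and of the first assertion is fine.

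The step that does not hold up is the reparametrization $\kappa':=\kappa-1$. The corollary only assumes $\kappa>0$ together with $\kappa<\frac12(H^{-1}-(2\ell+3)^{-1})$, so $\kappa-1$ need not be positive: for $\kappa\le 1$ the membership $f\in\mathcal{K}^{0}_{\kappa}$ gives no control on $w^{1+\kappa'}f$ for any $\kappa'>0$, and indeed $\mu[\tilde f]$ in the conclusion is only defined when $wf\in L^{1}(\R)$, which already forces a weight exponent of at least $1$. Moreover, even when $\kappa>1$, the numerical implication you invoke fails with the bound as printed: from $\kappa<\frac12(H^{-1}-\frac13)$ one only gets $\kappa'<\frac12(H^{-1}-\frac73)$, which is strictly weaker than the constraint $H(3+2\kappa')<1$, i.e.\ $\kappa'<\frac12(H^{-1}-3)$, required by Theorem \ref{thm:main2p}; the claim that $H<\frac13$ yields $\kappa'<\frac12$ is likewise unjustified. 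The correct resolution is to read the hypotheses of the corollary as those of Theorem \ref{thm:main2p} with $\ell=0$ — namely $f\in\mathcal{K}^{0}_{1+\kappa}$ with $\kappa\in(0,\frac12)$ and $H(2\kappa+3)<1$, equivalently $\kappa<\frac12(H^{-1}-3)$, which is positive precisely because $H<\frac13$ — after which no reparametrization is needed and the theorem applies verbatim. (If one insists on starting from $f\in\mathcal{K}^{0}_{\kappa}$ with $\kappa>1$, one must in addition shrink the auxiliary exponent to some $\kappa'\in\bigl(0,\min\{\kappa-1,\tfrac12,\tfrac12(H^{-1}-3)\}\bigr)$ rather than take $\kappa'=\kappa-1$; this costs only the rate, not the convergence.)
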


\begin{Remark}\label{rem:main}
\textup{In general, if we are interested in estimating $L_{t}^{(\ell)}$ with the statistic $$n^{a(\ell+1)-1}G_{t,\lettrequivabien,a}^{(n,\ell)},$$ for $a\leq H$, it is relevant to choose $a$ in such a way that the $L^{2}(\Omega)$-norm of the error $n^{a(\ell+1)-1}G_{t,\lettrequivabien,a}^{(n,\ell)}[g]-L_{t}^{(\ell)}(\lettrequivabien)\mu[g]$ is as small as possible. This problem is closely related to Theorem \ref{thm:main2p}, due to the fact that under the condition $\mu[\tilde{g}]\neq 0$, the convergence 
$$n^a(n^{a(\ell+1)-1}G_{t,\lettrequivabien,a}^{(n,\ell)}[g]-L_{t}^{(\ell)}(\lettrequivabien)\mu[g])\stackrel{L^2(\Omega)}{\longrightarrow}-L_{t}^{(\ell)}(\lettrequivabien)\mu[\tilde{g}]$$
implies that the $L^{2}(\Omega)$-norm of $n^{a(\ell+1)-1}G_{t,\lettrequivabien,a}^{(n,\ell)}[g]-L_{t}^{(\ell)}(\lettrequivabien)\mu[g]$ is of the order $n^{-a}$. Consequently, the value of $a$ that optimizes the rate at which $n^{a(\ell+1)-1}G_{t,\lettrequivabien,a}^{(n,\ell)}[g]$ converges to $L_{t}^{(\ell)}(\lettrequivabien)\mu[g]$ in $L^2(\Omega)$ is $a=H$.}
\end{Remark}

\subsection{Plan} The rest of the paper is organized as follows: In Section \ref{sec:prelimn} we present some preliminary results on the fractional Brownian motion, Malliavin calculus and local non-determinism. In Sections \ref{sec:firstfluct} and \ref{sec:firstfluc2t2} we prove Theorems \ref{thm:main2} and \ref{thm:main2p}. Finally, in Section \ref{ref:Appendix} we present some results related to the properties of $L_{t}^{(\ell)}(\lettrequivabien)$, and prove some technical identities for the proofs of Theorems \ref{thm:main2} and \ref{thm:main2p}.

\medskip

\noindent{\bf Acknowledgments}. We thank Mark Podolskij for a number of illuminating discussions. AJ is supported by the FNR grant R-AGR-3410-12-Z (MISSILe) at Luxembourg and Singapore Universities; GP is supported by the FNR grant R-AGR-3376-10 (FoRGES) at Luxembourg University.

%=======================================================================================================
\section{Preliminaries}\label{sec:prelimn}
\subsection{Malliavin calculus for classical Gaussian processes}\label{subsec:chaos}
In this section, we provide some notation and introduce the basic operators of the theory of Malliavin calculus. The reader is referred to \cite{NoP11, Nualart} for full details. Throughout this section, 
$X=\{X_{t}\ \processsymb\ t\geq0\}$ denotes a fractional Brownian motion defined on a probability space $(\Omega,\Fc,\Pb)$. Namely, $X$ is a centered Gaussian process with covariance function $\E\left[X_{s} X_{t} \right]=R(s,t)$, where
\begin{align*}
R(s,t)
  &=\frac{1}{2}(s^{2H}+t^{2H}-|t-s|^{2H}).
\end{align*}
We denote by $\Hg$ the Hilbert space obtained by taking the completion of the space of step functions over $[0,T]$, endowed with the inner product 
\begin{align*}
\Ip{\Indi{[0,s]},\Indi{[0,t]}}_{\Hg}
  &=  \E\left[X_{s} X_{t} \right],\qquad \text{ for }\quad 0\leq s,t. 
\end{align*}
The mapping $\Indi{[0,t]} \mapsto X_{t}$ can be extended to a linear isometry between $\Hg$ and the linear Gaussian subspace of ${L}^{2}\left(\Omega\right)$ generated by the process $X$. We will denote this isometry by $X(h)$, for $h\in\Hg$. For any integer $q\geq1$, we denote by $\Hg^{\otimes q}$ and $\Hg^{\odot q}$ the $q$-th tensor product of $\Hg$, and the $q$-th symmetric tensor product of $\Hg$ respectively. The $q$-th Wiener chaos, denoted by $\Hc_{q}$, is the closed subspace of ${L}^{2}(\Omega)$ generated by the variables 
\[
\left \{H_{q}(X(v))\ \Big|\ v\in\Hg,\Norm{v}_{\Hg}=1 \right\},
\]
 where $H_{q}$ is the $q$-th Hermite polynomial, defined by 
\begin{align*}
H_{q}(x)
  &:=(-1)^{q}e^{\frac{x^{2}}{2}}\frac{\text{d}^{q}}{\text{d}x^{q}}e^{-\frac{x^{2}}{2}}.
\end{align*}
For $q\in\N$ satisfying $q\geq1$, and $h\in \Hg$ such that $\Norm{h}_{\Hg}=1$, we define the mapping $I_{q}(h^{\otimes q}):=H_{q}(X(h)).$ 
%The range of $I_{q}$ is contained in $\Hc_{q}$. Furthermore, this mapping 
It can be extended to a linear isometry between $\Hg^{\odot q}$ (equipped with the norm $\sqrt{q!}\Norm{\cdot}_{\Hg^{\otimes q}}$) and $\Hc_{q}$ (equipped with the ${L}^{2}(\Omega)$-norm).\\

\noindent From now on, we assume that $\Fc$ coincides with the $\sigma$-field generated by $X$. By the celebrated chaos decomposition theorem, every element $F$ belonging to the space ${L}^{2}(\Omega,\Fc) = L^2(\Omega)$ of $\Fc$-measurable, square-integrable random variables can be written as
\begin{align*}
F=\E\left[F\right]+\sum_{q=1}^{\infty}I_{q}(h_{q}),
\end{align*}
for some unique sequence $\{h_{q}\}$ such that $h_{q}\in \Hg^{\odot q}$. Notice that for every 2-dimensional centered Gaussian vector $\vec{Y}=(Y_{1},Y_{2})$ satisfying $\E[Y_{1}^2]=\E[Y_{2}^2]=1$, we can find elements $h_1,h_{2}\in\Hg$ such that $\|h_{1}\|_{\Hg}=  \|h_{2}\|_{\Hg}=1$ and $\vec{Y}\stackrel{Law}{=}(X(h_{1}),X(h_{2}))$. Consequently, by the isometry property of $I_{q}$, we have that for all $q,\tilde{q}\in\N$, 

\begin{align}\label{eq:Hqorthog}
\E[H_{q}(Y_{1})H_{\tilde{q}}(Y_{2})]
  &=\E[I_{q}(h_{1}^{\otimes q})I_{\tilde{q}}(h_{2}^{\otimes \tilde{q}})]
	=q!\delta_{q,\tilde{q}}\langle h_{1} , h_{2} \rangle^q=\delta_{q,\tilde{q}}\E[Y_{1}Y_{2}]^q
\end{align}

In what follows, for every integer $q\geq1$, we will denote by 
$$J_{q}:{L}^{2}(\Omega)\rightarrow \mathcal{H}_q\subset {L}^{2}(\Omega)$$ 
the projection over the $q$-th Wiener chaos $\Hc_{q}$. In addition, we  denote by $J_{0}(F)$ the expectation of $F$. Let $\Sf$ denote the set  of all cylindrical random variables of the form
\begin{align*}
F= g(X(h_{1}),\dots, X(h_{n})),
\end{align*} 
where $g:\R^{n}\rightarrow\R$ is an infinitely differentiable function with compact support  and $h_{1},\dots, h_{n}$ are step functions defined over $[0,\infty)$. In the sequel, we  refer to the elements of $\Sf$ as ``smooth random variables''. For every $r\geq2$, the Malliavin derivative of order $r$ of $F$ with respect to $X$ is the element of ${L}^{2}(\Omega; \Hg^{\odot r})$ defined by 
\begin{align*}
D^rF
  &=\sum_{i_1,\dots, i_{r}=1}^{n}\frac{\partial^{r}g}{\partial x_{i_1}\cdots \partial x_{i_r}}(X(h_{1}),\dots, X(h_{n}))h_{i_1}\otimes\cdots\otimes h_{i_r}.
\end{align*}
For $p\geq1$ and $r\geq1$, the space $\D^{r,p}$ denotes the closure of $\Sf$ with respect to the norm $\Norm{\cdot}_{\D^{r,p}}$, defined by 
\begin{align}\label{eq:seminorm}
\Norm{F}_{\D^{r,p}}
  &:=\left(\E\left[\Abs{F}^{p}\right]+\sum_{i=1}^{r}\E\left[\Norm{D^{i}F}_{(\Hg^{d})^{\otimes i}}^{p}\right]\right)^{\frac{1}{p}}.
\end{align}
The operator $D^{r}$ can be consistently extended to the space $\D^{r,p}$. 

Let ${L}^{2}(\Omega;\Hg)$ denote the space of square integrable random variables with values in $\Hg$. A random element $u\in {L}^{2}(\Omega;\Hg)$ belongs to the domain of the divergence operator $\delta$, denoted by $\mathrm{Dom} \, \delta $, if and only if it satisfies
\begin{align*}
\Abs{\E\left[\Ip{DF,u}_{\Hg }\right]}
  &\leq C_{u}\E\left[F^{2}\right]^{\frac{1}{2}},\ \text{ for every } F\in\D^{1,2},
\end{align*}
where $C_{u}$ is  a constant only depending on $u$. If $u\in \mathrm{Dom} \,\delta$, then the random variable $\delta(u)$ is defined by the duality relationship
\begin{align*}
\E\left[F\delta(u)\right]=\E\left[\Ip{DF,u}_{\Hg}\right],
\end{align*}
which holds for every $F\in\D^{1,2}$. The divergence satisfies the property that for all $F\in\D^{1,2}$ and $u$ belonging to the domain of $\delta$ such that $Fu\in L^{2}(\Omega,\Hg)$, the $\Hg$-valued random variable $Fu$ belongs to the domain of $\delta$ and 
\begin{align}\label{eq:prodfor}
\delta(Fu)
  &=F\delta(u)-\langle DF,u\rangle_{\Hg}.
\end{align}
The reader is referred to \cite[Proposition~1.3.3]{Nualart} for a proof of this identity. The operator $L$ is the unbounded operator from $\D^{2,2}(\Omega)$ to $L^{2}(\Omega)$ given by the formula
$$LF:=\sum_{q=1}^{\infty}-q J_{q}F.$$
It is the infinitesimal generator of the Ornstein-Uhlenbeck semigroup $\{P_{\theta}\}_{\theta\geq0}$, which is defined as follows
\begin{align*}
\begin{array}{cccc}P_{\theta}: & L^{2}(\Omega)&\rightarrow & L^{2}(\Omega)\\
                               & F                &\mapsto     & \sum_{q=0}^{\infty}e^{-q\theta}J_{q}F.\end{array}
  %&:=\sum_{q=0}^{\infty}e^{-q\theta}J_{q}.
\end{align*}
Moreover, for any $F\in\D^{2,2}(\Omega)$ one has
\begin{align}\label{eq:deltaDFI}
\delta DF
  &=-LF.
\end{align}
We also define the operator $L^{-1}:L^{2}(\Omega)\rightarrow L^{2}(\Omega)$ by
$$L^{-1}F=\sum_{q=1}^{\infty}-\frac{1}{q}J_{q}F.$$
Notice that $L^{-1}$ is a bounded operator and satisfies $LL^{-1}F=F-\E\left[F\right]$ for every $F\in L^{2}(\Omega)$, so that $L^{-1}$ acts as a pseudo-inverse of $L$.  Assume that $\widetilde{X}$ is an independent copy of $X$ such that $X,\widetilde{X}$ are defined in the product space $(\Omega\times \widetilde{\Omega}, \Fc \otimes\widetilde{\Fc},\Pb\otimes\widetilde{\Pb})$. Given a random variable $F\in L^{2}(\Omega)$, we can write $F=\Psi_{F}(X)$, where $\Psi_{F}$ is a measurable mapping from $\R^{\Hg}$ to $\R$, determined $\Pb$-a.s. Then, for every $\theta\geq0$ we have the Mehler formula 
\begin{align}\label{eq:Mehler}
P_{\theta}F
  &=\widetilde{\E}\left[\Psi_{F}(e^{-\theta}X+\sqrt{1-e^{-2\theta}}\widetilde{X})\right],
\end{align}
where $\widetilde{\E}$ denotes the expectation with respect to $\widetilde{\Pb}$. The operator $-L^{-1}$ can be expressed in terms of $P_{\theta}$, as follows
\begin{align}\label{eq:Mehler2}
-L^{-1}F
  &=\int_{0}^{\infty}P_{\theta}Fd\theta,~ \text{ for any } F \text{ such that }\E\left[F\right]=0.
\end{align}
From \eqref{eq:deltaDFI}-\eqref{eq:Mehler2} it follows that if $F=g(X(h))$ for some $h\in\Hg$ and $g:\R\rightarrow\R$ such that $g^{\prime}(X(h))\in L^{2}(\Omega)$, then
\begin{align}\label{eq:writeFasdeltaq}
F-\E[F]
  &=-\delta DL^{-1}F=\delta\bigg(\int_{0}^{\infty}e^{-\theta}P_{\theta}DFd\theta\bigg)\nonumber\\
	&=\delta\big(h\int_{0}^{\infty}e^{-\theta}\tilde{\mathbb{E}}[g^{\prime}(e^{-\theta}X(h)+\sqrt{1-e^{-2\theta}}\tilde{X}(h)]d\theta\big)\nonumber\\
  &=\delta\big(h\int_{0}^{1} \tilde{\mathbb{E}}[g^{\prime}(zX(h)+\sqrt{1-z^2}\tilde{X}(h)]dz\big).
\end{align}
Consequently, we deduce from \eqref{eq:prodfor} that for all $h_1,h_2\in\Hg$ and all differentiable functions $g_1,g_2:\R\rightarrow\R$ such that $g_1^{\prime}(X(h_1)),g_2^{\prime}(X(h_2))\in L^{2}(\Omega)$,  the random variables $F_1=g_1(X(h_1))$ and $F_2=g_2(X(h_2))$ satisfy
\begin{align}\label{eq:Fasdeltaq}
F_2(F_1-\E[F_1])
  &=\delta\big(h_1F_2\int_{0}^{1}\tilde{\E}[g_1^{\prime}(zX(h_1)+\sqrt{1-z^2}\tilde{X}(h_1))]dz\big)\nonumber\\
	&+\langle h_1,h_2\rangle_{\Hg} g_2^{\prime}(X(h_2))\int_{0}^{1}\tilde{\E}[g_1^{\prime}(zX(h_1)+\sqrt{1-z^2}\tilde{X}(h_1))]dz.
\end{align}

%==================================================================================================================

\subsection{Properties of the covariance of Gaussian vectors}\label{subsec:variance_estimation}
We next present some estimations for the increments of $X$ and identities for the determinant of covariance matrix of Gaussian vectors. We start with estimates that will be repeatedly used throughout the paper.
\begin{Lemma}
For all $T>0$, there exists a constant $C>0$ only depending on  $H$, such that for all $u,v,w\in[0,T]$ and $h,k>0$ satisfying $2h\leq v-u$, 

\begin{align}
|\langle\Indi{[u,u+h]},\Indi{[0,w]}\rangle_{\Hg}|
  &\leq \Indi{\{H<\frac{1}{2}\}}h^{2H}+T^{2H-1}\Indi{\{H\geq \frac{1}{2}\}}h ,\label{eq:estimationip2p}\\
|\langle \Indi{[u,u+h]},\Indi{[v,v+k]}\rangle_{\Hg}|
  &\leq 2^{2-2H}H|2H-1|hk|v-u|^{2H-2}.\label{eq:estimationip}
\end{align}
\end{Lemma}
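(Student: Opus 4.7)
Since both $\mathbf{1}_{[u,u+h]}$ and $\mathbf{1}_{[0,w]},\mathbf{1}_{[v,v+k]}$ are step functions, the inner products are expressed via the covariance function through
\[
\langle \mathbf{1}_{[a,b]},\mathbf{1}_{[c,d]}\rangle_{\Hg} = R(b,d) - R(b,c) - R(a,d) + R(a,c),\qquad
R(s,t) = \tfrac12\bigl(s^{2H}+t^{2H}-|t-s|^{2H}\bigr).
\]
I will fix the elementary real-variable inequality
\begin{equation}\label{e:plan-sub}
|a^{2H}-b^{2H}| \le |a-b|^{2H} \quad (0<H<\tfrac12,\ a,b\ge 0),\qquad |a^{2H}-b^{2H}|\le 2H\,T^{2H-1}|a-b|\quad (\tfrac12\le H<1,\ a,b\in[0,T]),
\end{equation}
which follow respectively from concavity/subadditivity of $x\mapsto x^{2H}$ on $[0,\infty)$ and from the mean value theorem applied to $\phi(x)=x^{2H}$, whose derivative $\phi'(x)=2Hx^{2H-1}$ is bounded by $2H\,T^{2H-1}$ on $[0,T]$.

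\emph{First bound \eqref{eq:estimationip2p}.} Writing out the covariance,
\[
\langle \mathbf{1}_{[u,u+h]},\mathbf{1}_{[0,w]}\rangle_{\Hg} = R(u+h,w)-R(u,w) = \tfrac12\bigl[((u+h)^{2H}-u^{2H}) + (|u-w|^{2H}-|u+h-w|^{2H})\bigr].
\]
Applying the appropriate inequality from \eqref{e:plan-sub} to each of the two differences (noting that the arguments of the second difference differ in absolute value by at most $h$) yields the bound $h^{2H}$ when $H<1/2$ and $2H\,T^{2H-1}h$ when $H\ge 1/2$. The dependence on $H$ is absorbed into the declared constant $C=C(H)$.

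\emph{Second bound \eqref{eq:estimationip}.} The hypothesis $2h\le v-u$ makes the intervals disjoint with $u+h<v$, so all four arguments of the absolute values have a determined sign. Setting $a:=v-u$, the same expansion of the covariance yields
\[
\langle \mathbf{1}_{[u,u+h]},\mathbf{1}_{[v,v+k]}\rangle_{\Hg} = \tfrac12\bigl[(a-h)^{2H}+(a+k)^{2H}-(a+k-h)^{2H}-a^{2H}\bigr] = \tfrac12\bigl[g(a-h)-g(a)\bigr],
\]
where $g(x):=\phi(x)-\phi(x+k)$ with $\phi(x)=x^{2H}$. Two successive applications of the mean value theorem give
\[
g(a-h)-g(a) = -h\,g'(\xi_1) = h\,k\,\phi''(\eta)= h\,k\cdot 2H(2H-1)\eta^{2H-2},
\]
for some $\xi_1\in(a-h,a)$ and $\eta\in(\xi_1,\xi_1+k)$. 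Since $2H-2<0$ and $\eta>\xi_1>a-h\ge a/2$ (again using $2h\le a$), we have $\eta^{2H-2}\le (a/2)^{2H-2}=2^{2-2H}a^{2H-2}$, which gives exactly the claimed inequality after taking absolute values.

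\textbf{Expected difficulties.} There is no genuine obstacle here: both bounds reduce to algebraic manipulations of the covariance function together with one- and two-step mean value arguments on $\phi(x)=x^{2H}$. The one delicate point is the geometric observation $a-h\ge a/2$ under the hypothesis $2h\le v-u$, which is what produces the power $|v-u|^{2H-2}$ rather than a worse quantity depending on $h$; I will record this explicitly. Care must also be taken with the sign of $2H-1$ (hence the absolute value in \eqref{eq:estimationip}) so that the estimate covers all $H\in(0,1)\setminus\{1/2\}$ (and becomes trivial in the Brownian case $H=1/2$).
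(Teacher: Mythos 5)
Your proof is correct and follows essentially the same route as the paper: the first bound uses the identical decomposition of $R(u+h,w)-R(u,w)$ together with subadditivity of $x\mapsto x^{2H}$ (resp.\ the mean value theorem on $[0,T]$) according to whether $H<\frac12$ or $H\ge\frac12$, and the second bound rests on the same key observation that $2h\le v-u$ forces the relevant argument to stay above $(v-u)/2$. The only cosmetic difference is that for \eqref{eq:estimationip} the paper bounds the double integral $H(2H-1)hk\int_{[0,1]^2}|v-u+\eta k-\xi h|^{2H-2}\,d\xi\,d\eta$ directly, whereas you reach the same quantity, with the same constant $2^{2-2H}H|2H-1|$, by applying the mean value theorem twice to the second difference of $x\mapsto x^{2H}$.
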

\begin{proof}
Since $2h\leq v-u$, we have that $\Indi{[u,u+h]}$ and $\Indi{[v,v+k]}$ have disjoint supports and
\begin{align*}
|\langle\Indi{[u,u+h]},\Indi{[v,v+k]}\rangle_{\Hg}|
  &=\left|H(2H-1)hk\int_{[0,1]^2}|v-u+\eta k-\xi h|^{2H-2}d\xi d\eta\right|\\
	&\leq \left|H(2H-1)hk\int_{[0,1]^2}|\frac{v-u}{2}|^{2H-2}d\xi d\eta\right|,
\end{align*}
which gives \eqref{eq:estimationip}.  To show \eqref{eq:estimationip2p}, we observe that 
\begin{align*}
\langle \Indi{[u,u+h]},\Indi{[0,w]}\rangle_{\Hg}
  &=\frac{1}{2}((u+h)^{2H}-u^{2H}+|w-u|^{2H}-|w-u-h|^{2H}).
\end{align*} 
Using either that,  if $H<\frac{1}{2}$ and all $a,b\geq 0$, 
\begin{align*}
(a+b)^{2H}
  &\leq a^{2H}+b^{2H},
\end{align*}
or that, if $H\geq \frac{1}{2}$ and $0<a\leq b\leq T$,
\begin{eqnarray*}
b^{2H}=a^{2H}+2H\int_0^{b-a}(x+a)^{2H-1}dx\leq a^{2H}+2HT^{2H-1}(b-a),
\end{eqnarray*}
the desired conclusion \eqref{eq:estimationip2p} follows.
\end{proof}

\noindent Next we describe some properties of the conditional variances of general Gaussian vectors. In the sequel, for all $r\in\N$ and all non-negative definite matrices $C\in\R^{r\times r}$, $|C|$ will denote the determinant of $C$ and $\Phi_{C}:\R^{r}\rightarrow\R^r$ will denote the centered Gaussian kernel of dimension $r$ with covariance $C$, defined by 
\begin{align*}
\Phi_{C}(x_{1},\dots, x_{r})
  &:=(2\pi)^{-\frac{r}{2}}|C|^{-\frac{1}{2}}\exp\{-\frac{1}{2}\sum_{i,j=1}^rx_{i}C_{i,j}x_{j}\}.
\end{align*}
Let $\vec{N}=(N_1,\dots, N_r)$ be a centered Gaussian vector of dimension $r\in\N$ and covariance matrix $\Sigma$, defined in $(\Omega, \Fc, \Pb)$. Denote by $\mathcal{G}$ the $\sigma$-algebra generated by $\vec{N}$. If $F$ is a $\mathcal{G}$-measurable, square integrable random variable and $\mathcal{A}$ is a subalgebra of $\mathcal{G}$, the conditional variance of $F$ given $\mathcal{A}$ is the random variable defined by 
\begin{align*}
\text{Var}[F\ |\ \mathcal{A}]
  &:=\E[\ (F-\E[F\ |\ \mathcal{A}])^2|\ \mathcal{A}].
\end{align*}
In the case where $\mathcal{A}$ is generated by random variables $F_1,\dots, F_{n}$, we will use the notation $\text{Var}[F\ |\ F_1,\dots, F_n]$ instead of $\text{Var}[F\ |\ \mathcal{A}]$.  It is well known that in the case where $F_1,\dots, F_n$ are jointly Gaussian, the conditional variance $\text{Var}[F_1\ |\ F_2,\dots, F_n]$ is deterministic. Consequently, by using the fact that $\E[(F-\E[F\ |\ \mathcal{H}])^2]\leq \E[(F-\E[F\ |\ \mathcal{K}])^2]$ for every $\sigma$-algebras $\mathcal{H}$ and $\mathcal{K}$ satisfying $\mathcal{K}\subset\mathcal{H}\subset\mathcal{F}$, we have that
\begin{align}\label{eq:varreduceinfo}
\text{Var}[N_1\ |\ N_2,\dots, N_r]
  &=\E[(N_{1}-\E[N_1\ |\ N_2,\dots, N_r])^2]
	\leq \E[(N_{1}-\E[N_1\ |\ N_3,\dots, N_r])^2]\nonumber\\
	&= \text{Var}[N_1\ |\ N_3,\dots, N_r] \leq \text{Var}[N_1].
\end{align}
In addition, the determinant of $C$ can be represented as
\begin{align}\label{eq:detdecomp}
|\Sigma|
  &=\text{Var}[N_1]\prod_{j=1}^r\text{Var}[N_r\ |\ N_1,\dots, N_{r-1}]. 
\end{align} 
This identity can be easily obtained by first expressing the probability density $\Phi_{\Sigma}$ of $\vec{N}$ as the product of the conditional densities of its components, and then evaluating at zero the resulting decomposition.\\

\noindent Finally we recall the sectorial local non-determinism property for the fractional Brownian motion, which states that there exists a constant $\delta>0$, only depending on $H$, such that for all $n\in\N$ and $t,t_1,\dots, t_{n}\in(0,\infty)$, 
\begin{align}\label{ineq:localnd}
\text{Var}[X_{t}  |\ X_{t_1} ,\dots, X_{t_{n}} ]
  &\geq \delta\min\{|t-t_i|^{2H}\ |\ 0\leq i\leq n\},
\end{align}
where $t_{0}:=0$.

%=============================
\subsection{Fourier representation for the derivatives of the fBm}\label{sec:Fourierderivlocal}
 In Lemma \ref{lem:nonexistenceLell}, it is proved as well that the local time and its derivatives can be represented as
\begin{align}
L_{t}^{(\ell)}(\lettrequivabien)
  &=\int_{\R}\int_{0}^{t}(\textbf{i}\xi)^{\ell}e^{\textbf{i}\xi(X_{s}-\lettrequivabien)}dsd\xi, \label{eq:FourierrepLprime}
\end{align}
meaning that, as $N\to \infty$, the sequence $\int_{-N}^{N}\int_{0}^{t}(\textbf{i}\xi)^{\ell}e^{\textbf{i}\xi(X_{s}-\lettrequivabien)}dsd\xi$, converges in $L^2(\Omega)$ to $L_{t}^{(\ell)}(\lettrequivabien)$. Notice that the type of limit appearing in the representation \eqref{eq:FourierrepLprime} belongs to the class of functions of the form 
\begin{align}\label{eq:indefint}
\lim_{M\rightarrow\infty}\int_{-M}^{M}\int_{\R}\int_{0}^{t}e^{\textbf{i}\xi X_{s}}g(s,\xi,y)ds dyd\xi
  &=\lim_{M\rightarrow\infty}\int_{\R}\int_{-M}^{M}\int_{0}^{t}e^{\textbf{i}\xi X_{s}}g(s,\xi,y)ds d\xi dy,
\end{align}
where $t>0$ and $g:\R^{3}\rightarrow\R$ is such that $g$ is absolutely integrable over $[-M,M]\times\R\times[0,t]$ for all $M>0$, and the limit \eqref{eq:indefint} exists in the $L^{2}(\Omega)$ sense. For convenience on the notation, we will denote the limit of \eqref{eq:indefint} simply by 
\begin{align}\label{eq:indefint2}
\int_{\R^2}\int_{0}^{t}e^{\textbf{i}\xi X_{s}}g(s,\xi,y)ds dyd\xi.
\end{align}
We will often require bounds on the $L^{2}(\Omega)$-norms of expressions of the form \eqref{eq:indefint2}. These type of estimations can be obtained in the following way: for a given $M>0$, we can write
\begin{multline*}
\E\big[\big(\int_{-M}^{M}\int_{\R}\int_{0}^{t}e^{\textbf{i}\xi X_{s}}g(s,\xi,y)ds dyd\xi\big)^2\big]\\
  =\int_{[-M,M]^2}\int_{\R^2}\int_{[0,t]^2}e^{-\frac{1}{2}\vec{\xi}\Lambda \vec{\xi}}g(s,\xi,y)g(\tilde{s},\tilde{\xi},\tilde{y})d\vec{s} d\vec{y}d\vec{\xi},
\end{multline*}  
where $\Lambda(s,\tilde{s})$ is the covariance matrix of $(X_s,X_{\tilde{s}})$ and $\vec{s}:=(s_1,s_2)$, $\vec{\xi}:=(\xi,\tilde{\xi})$ and $\vec{y}:=(y,\tilde{y})$. Thus, provided that $e^{-\frac{1}{2}\vec{\xi}\Lambda \vec{\xi}}g(s,\xi,y)g(\tilde{s},\tilde{\xi},\tilde{y})$ is integrable over $\vec{s}\in[0,T]^2,\vec{\xi},\vec{y}\in\R^{2}$, by the dominated convergence theorem we have that 
\begin{align}\label{eq:indefint3}
\E\big[\big(\int_{\R^2}\int_{0}^{t}e^{\textbf{i}\xi X_{s}}g(s,\xi,y)ds dyd\xi\big)^2\big]
  &=\int_{\R^4}\int_{[0,t]^2}e^{-\frac{1}{2}\vec{\xi}\Lambda \vec{\xi}}g(s,\xi,y)g(\tilde{s},\tilde{\xi},\tilde{y})d\vec{s} d\vec{y}d\vec{\xi}.
\end{align}
Taking this discussion into consideration, in the sequel we will adopt the notation \eqref{eq:indefint2} for describing the limit \eqref{eq:indefint} and use the formula \eqref{eq:indefint3} for describing the associated moment of order 2.
%Indeed, by taking $g(s,\xi,y)=e^{-\textbf{i}\xi\lambda}(\textbf{i}\xi)\phi_{1}(y)$, we recover the right-hand side of \eqref{eq:FourierrepLprime}. 

We are now ready for the proofs Theorems \ref{thm:main2} and \ref{thm:main2p}, which will be presented in the following two sections.
%=======================================================================================================
\section{Proof of Theorem \ref{thm:main2}}\label{sec:firstfluct}
%=======================================================================================================
\noindent Suppose that $0<a\leq H<\frac{1}{2\ell+1}$. In the sequel, for every integrable function $\psi:\R\rightarrow\R$, we denote by $\hat{\psi}$ its Fourier transform. \\

The proofs of \eqref{eq:thmmain22}-\eqref{eq:ucpmain2} rely on a suitable decomposition of the process $G_{t,\lettrequivabien,a}^{(n,\ell)}[g]$, which we describe next. By the Fourier inversion formula, for all $x\in\R$ we have that 
\begin{align*}
g^{(\ell)}(n^ax)
  &=\frac{1}{2\pi}\int_{\R}(\textbf{i}\xi)^{\ell}\hat{g}(\xi)e^{\textbf{i}\xi n^{a}x}d\xi =\frac{1}{2\pi n^{a(\ell+1)}}\int_{\R}(\textbf{i}\xi)^{\ell}\hat{g}(\frac{\xi}{n^a})e^{\textbf{i}\xi x}d\xi.
\end{align*}
Thus, using the fact that 
\begin{align*}
\hat{g}(\frac{\xi}{n^a})
  &=\int_{\R}e^{-\textbf{i}\frac{\xi}{n^a}y}g(y)dy,
\end{align*}
we get that 
\begin{align}\label{eq:gfourier}
g^{(\ell)}(n^ax)
  &=\frac{1}{2\pi n^{a(\ell+1)}}\int_{\R^{2}}(\textbf{i}\xi)^{\ell}e^{\textbf{i}\xi (x-\frac{y}{n^{a}})}g(y)dyd\xi.
\end{align}
As a consequence,
\begin{align}\label{eq:Gtnormdecompzero}
n^{a(\ell+1)-1}G_{t,\lettrequivabien,a}^{(n,\ell)}[g]
  &=n^{a(\ell+1)-1}\sum_{i=2}^{\lfloor nt\rfloor}g^{(\ell)}(n^a(X_{\frac{i-1}{n}}-\lettrequivabien))
  =\frac{1}{2\pi n}\int_{\R^{2}}\sum_{i=2}^{\lfloor nt\rfloor}(\textbf{i}\xi)^{\ell}e^{\textbf{i}\xi (X_{\frac{i-1}{n}}-\lettrequivabien-\frac{y}{n^{a}})}g(y)dyd\xi\nonumber\\
	&=\frac{1}{2\pi}\int_{\R^{2}}\sum_{i=2}^{\lfloor nt\rfloor}\int_{\frac{i-1}{n}}^{\frac{i}{n}}(\textbf{i}\xi)^{\ell}e^{\textbf{i}\xi (X_{\frac{i-1}{n}}-\lettrequivabien-\frac{y}{n^{a}})}g(y)dsdyd\xi.
\end{align}
Using \eqref{eq:Gtnormdecompzero} as well as the Fourier representation \eqref{eq:FourierrepLprime} of $L_t^{(\ell)}(\lettrequivabien)$, we get that
\begin{align}\label{eq:GtnormminusLoct}
n^{a(\ell+1)-1}G_{t,\lettrequivabien,a}^{(n,\ell)}[g]-\mu[g]L_{t}^{(\ell)}(\lettrequivabien)
  &=\frac{1}{2\pi}\int_{\R^{2}}\sum_{i=2}^{\lfloor nt\rfloor}\int_{\frac{i-1}{n}}^{\frac{i}{n}}(\textbf{i}\xi)^{\ell}(e^{\textbf{i}\xi (X_{\frac{i-1}{n}}-\lettrequivabien-\frac{y}{n^{a}})}-e^{\textbf{i}\xi(X_s-\lettrequivabien)})g(y)dsdyd\xi\nonumber\\
	&-\mu[g]L_{\frac{1}{n}}^{(\ell)}(\lettrequivabien)-\mu[g](L_{t}^{(\ell)}(\lettrequivabien)-L_{\frac{\lfloor nt\rfloor}{n}}^{(\ell)}(\lettrequivabien)).
\end{align}

Let us define 
$\Delta_{i,s}X:=X_{s}-X_{\frac{i-1}{n}}$
and 
$\sigma_{i,s}^2
  :=\text{Var}[\Delta_{i,s}X]=\big(s-\frac{i-1}{n}\big)^{2H}.$
The difference of exponentials of the first term in the right hand side can be written as

\begin{align}\label{eq:sumofexponentials}
e^{\textbf{i}\xi (X_{\frac{i-1}{n}}-\frac{y}{n^{a}})}-e^{\textbf{i}\xi X_s}
  &=e^{\textbf{i}\xi(X_{\frac{i-1}{n}}-\frac{y}{n^a})}(e^{-\frac{1}{2}\xi^2\sigma_{s,i}^2}-e^{\textbf{i}\xi\Delta_{i,s}X})
	+e^{\textbf{i}\xi X_{s}}(e^{-\textbf{i}\xi\frac{y}{n^{a}}}-1 )\\
	&+e^{\textbf{i}\xi(X_{\frac{i-1}{n}}-\frac{y}{n^a})}(1-e^{-\frac{1}{2}\xi^2\sigma_{s,i}^2}) ,\nonumber
\end{align}
which by \eqref{eq:GtnormminusLoct}, leads to  
\begin{align}\label{eq:step2decompwithR1}
n^{a(\ell+1)-1}G_{t,\lettrequivabien,a}^{(n,\ell)}[g]-\mu[g]L_{t}^{(\ell)}(\lettrequivabien)
  &=R_{t,0}^{(n)}+R_{t,1}^{(n)}+R_{t,2}^{(n)}+{R}_{t,3}^{(n)},
\end{align}
where  
\begin{align}
R_{t,0}^{(n)}
  &:=-\mu[g] L_{\frac{1}{n}}^{(\ell)}(\lettrequivabien)-\mu[g](L_{t}^{(\ell)}(\lettrequivabien)-L_{\frac{\lfloor nt\rfloor}{n}}^{(\ell)}(\lettrequivabien))\label{eq:R0deff1}\\
R_{t,1}^{(n)}
  &:=\frac{1}{2\pi}\int_{\R^{2}}\sum_{i=2}^{\lfloor nt\rfloor}\int_{\frac{i-1}{n}}^{\frac{i}{n}}(\textbf{i}\xi)^{\ell}e^{\textbf{i}\xi(X_{\frac{i-1}{n}}-\lettrequivabien-\frac{y}{n^a})}(e^{-\frac{1}{2}\xi^2\sigma_{i,s}^2}-e^{\textbf{i}\xi\Delta_{i,s}X})g(y)dsd\xi dy\label{eq:R1deff1}\\
R_{t,2}^{(n)}
  &:=\frac{1}{2\pi}\int_{\R^{2}}\sum_{i=2}^{\lfloor nt\rfloor}\int_{\frac{i-1}{n}}^{\frac{i}{n}}(\textbf{i}\xi)^{\ell}e^{\textbf{i}\xi(X_{\frac{i-1}{n}}-\lettrequivabien-\frac{y}{n^a})}(1-e^{-\frac{1}{2}\xi^2\sigma_{i,s}^2})g(y)dsd\xi dy\label{eq:R2deff1}\\
R_{t,3}^{(n)}
  &:=\frac{1}{2\pi}\int_{\R^{2}}\sum_{i=2}^{\lfloor nt\rfloor}\int_{\frac{i-1}{n}}^{\frac{i}{n}}(\textbf{i}\xi)^{\ell}e^{\textbf{i}\xi (X_{s}-\lettrequivabien)}(e^{-\textbf{i}\xi\frac{y}{n^{a}}}-1)	g(y)dsdyd\xi.\label{eq:R3tildedef}
\end{align}
Notice that the decomposition \eqref{eq:step2decompwithR1} reduces the problem of proving \eqref{eq:thmmain22} to finding bounds for the $L^{2}(\Omega)$-norm of $R_{t,0}^{(n)},\dots, R_{t,4}^{(n)}$ for $t>0$ fixed and $\ell\geq 0$.

Moreover, assume for the moment that the proof of \eqref{eq:thmmain22} is concluded, and also that we have shown that the family of processes
\begin{align}\label{eq:processmain}
n^{a(\ell+1)-1}G_{t,\lettrequivabien,a}^{(n,\ell)}[g]-\mu[g]L_{t}^{(\ell)}(\lettrequivabien)
\end{align}
is tight in the following two cases: (i) when regarded as a collection of random elements with values in $D[\delta, T]$ (that is, the class of c\`adl\`ag mappings on $[\delta,T]$ endowed with the Skorohod topology, for some $0<\delta<T$), in the case where $H<\frac{1}{2\ell+1}$ and $\ell\geq 1$, and (ii) when regarded as a collection of random elements with values in $D[0, T]$ (for some $0<\delta<T$),  in the case where either $H<\frac{1}{2\ell+2}$ and $\ell\geq 1$ or $\ell=0$. 
Using that the finite dimensional distributions of the process 
 \eqref{eq:processmain}
 converge to those of the zero process by \eqref{eq:thmmain22}, and using the classical discussion contained in \cite[p. 124]{Billin} (see also \cite{WW}), we can therefore conclude that:

\begin{enumerate}
\item[-] If $0< T_{1} <T_{2}$, then 
\begin{align*}
\left\{\sup_{t\in[T_1,T_2]}|n^{a(\ell+1)-1}G_{t,\lettrequivabien,a}^{(n,\ell)}[g]-L_{t}^{(\ell)}(\lettrequivabien)\mu[g]|\right\}_{n\geq 1}
  &\stackrel{Law}{\rightarrow}0
	\quad\quad\quad  \text{ as }\  n\rightarrow\infty.
\end{align*}

\item[-]  If $H<\frac{1}{2\ell+2}$  and $T>0$, then 
\begin{align*}
\left\{\sup_{t\in[0,T]}|n^{a(\ell+1)-1}G_{t,\lettrequivabien,a}^{(n,\ell)}[g]-L_{t}^{(\ell)}(\lettrequivabien)\mu[g]|\right\}_{n\geq 1}
  &\stackrel{Law}{\rightarrow}0
	\quad\quad\quad  \text{ as }\  n\rightarrow\infty.
\end{align*}
\end{enumerate}
Parts (i) and (ii) of Theorem \ref{thm:main2} then follow from the fact that every sequence of random variables $\{\xi_{n}\}_{n\geq 1}$ defined in $(\Omega,\mathcal{F},\mathbb{P})$ such that
$
\xi_{n}
  \stackrel{Law}{\rightarrow}L,$ 
for some deterministic $L\in\R$, satisfies as well the convergence 
$\xi_{n}
  \stackrel{\mathbb{P}}{\rightarrow}L$.\\

\noindent In order to examine the tightness property we distinguish between the cases $\ell=0$ and $\ell\geq 1$.  In the case $\ell=0$, the  property follows from Remark \ref{rem:tightness} (Dini's theorem). For handling the case $\ell\geq 1$ we proceed as follows. Let $0\leq T_{1}<T_{2}$ fixed. By \eqref{eq:step2decompwithR1}, it suffices to show that the processes
\begin{align}\label{eq:tightnessgoal2}
\{R_{t,i}^{(n)}\ \processsymb\ t\in[T_1,T_2]\}
\end{align} 
are tight for $i=0,\dots, 3$. Notice that in order to prove such property,  it suffices to show that the processes $\{\tilde{R}_{t,i}^{(n)}\ \processsymb\ t\in[0,T_2-T_{1}]\}$, with $\tilde{R}_t:=R_{T_{1}+t}$ and $i=0,\dots, 3$, are tight. To verify this, we shall use the Billingsley criterion (see \cite[Theorem~12.3]{Billin}),  in order to reduce the problem of proving tightness for \eqref{eq:tightnessgoal2}, to showing that there exist constants $C,p>0$ such that for all $t_1,t_{2}\in[T_1,T_2]$ and $i=0,\dots, 3$,
\begin{align}\label{eq:tightnessgoal1}
\E\left[\Abs{R_{t_2,i}^{(n)}-R_{t_1,i}^{(n)}}^{p}\right]\leq C\rho_n(t_1,t_2)^{\alpha(p)},
\end{align} 
where $\alpha(p)>1$ is some constant only depending on $p$ and $H$, and 
\begin{align*}
\rho_n(t_1,t_2)
  &:=\Abs{\frac{\lfloor nt_{2}\rfloor-\lfloor nt_{1}\rfloor}{n}}\vee|t_2-t_1|.
\end{align*}

In what follows, to keep the length of this paper within bounds we concentrate only on the case where either $p=2$ and $i\neq0$, or when $p\geq 0$ is arbitrary and $i=0$, which are  two cases representative of the difficulty.\\

As a summary of the discussion above,
we obtain that, 
 in order to conclude the proof of Theorem \ref{thm:main2}, it remains
 to check that, with $\beta=\beta(H,\ell)>0$ defined as
\begin{align*}
\beta:=
  &\left\{\begin{array}{lll}\kappa&\text{ if }&H\in[\frac{1}{2(\ell+1)},\frac{1}{2\ell+1})\\\kappa\wedge(\frac{1-2H(\ell+1)}{4H})&\text{ if }& H\in(0,\frac{1}{2(\ell+1)}),\end{array}\right.
\end{align*}
then for all $T>0$, there exists a constant $C_{T}>0$ only depending on $H$, $T$ and $\ell$, such that the following claims hold true: 

\begin{enumerate}
\item[-] For every $\delta>0$, $t_1,t_2\in[0,T]$ and all $i=1,2,3$, we have that
\begin{align}\label{eq:remindsfortight12}
\|R_{t_2,i}^{(n)}-R_{t_1,i}^{(n)}\|_{L^{2}(\Omega)}^2
  & \leq   C_{T}(\|w^{\kappa}g\|_{L^{1}(\R)}^2 + \|g^{(\ell)}\|_{L^{2}(\R)}^2)\nonumber\\
	&\times \left\{\begin{array}{ccc}
\delta^{-H}\rho_n(t_1,t_2)^{2-H(2\ell+2\beta+1)}  &\text{ if }  & t_1,t_2\in[\delta,T]\\
\rho_n(t_1,t_2)^{2-H(2\ell+2\beta+2)}             &\text{ if }  & t_1,t_2\in[0,T].
\end{array}\right.
\end{align}

\item[-] For every $t_1,t_2>0$ and $p\in\N$, 
\begin{align}\label{eq:remindsfortight12tre}
\|R_{t_2,0}^{(n)}-R_{t_1,0}^{(n)}\|_{L^{2p}(\Omega)}^{2p}
  & \leq  C_{T}^p\| g\|_{L^{1}(\R)}^{2p}\rho_n(t_1,t_2)^{2p(1-H(1+\ell))}.
\end{align}

\item[-] For every $t>0$ and $i=0,1,2,3$,
\begin{align}\label{eq:remindsfortight12asd}
\|R_{t,i}^{(n)} \|_{L^{2}(\Omega)}^2
  &\leq C_t (\|w^{\kappa}g\|_{L^{1}(\R)}^2 + \|g^{(\ell)}\|_{L^{2}(\R)}^2)n^{-\kappa a},
\end{align}
for some constant $C_{t}>0$ depending on $t$, but not on $n$ and $g$.

\end{enumerate}

Indeed, the estimate \eqref{eq:thmmain22} follows from \eqref{eq:step2decompwithR1} and \eqref{eq:remindsfortight12asd}. Moreover, if $H(2\ell+1)<1$ and $T_{1}>0$, then by an application of \eqref{eq:remindsfortight12} with $\delta=T_{1}$ and $T=T_{2}$, we obtain
\begin{align}\label{ineq:Billingsleyc1}
\|R_{t_2,i}^{(n)}-R_{t_1,i}^{(n)}\|_{L^{2}(\Omega)}^2
  &\leq C_{T_1}\rho_n(t_1,t_2)^{2-H(2\ell+2\beta+1)},
\end{align}
for $i=1,2, 3$ and $t_1,t_2\in[T_1,T_2]$, where $C_{T_1}$ is a constant independent of $t_1,t_2$ and $n$. Then, using the fact that 
$\beta\leq \kappa<\frac{1-H(2\ell+1)}{2H}$, we deduce that  $2-H(2\ell+2\beta+1)>1$, which by \eqref{ineq:Billingsleyc1}  implies that the Billingsley criterion holds for $R_{t,1}^{(n)}$, $R_{t,2}^{(n)}$ and $R_{t,3}^{(n)}$.  Similarly, if $2H(\ell+1)<1$, then by \eqref{eq:remindsfortight12},
\begin{align}\label{ineq:Billingsleyc2}
\|R_{t_2,i}^{(n)}-R_{t_1,i}^{(n)}\|_{L^{2}(\Omega)}^2
  &\leq C\rho_n(t_1,t_2)^{2-2H(\ell+\beta+1)},
\end{align}
for $i=1,2,3$, where $C$ is a constant independent of $t_1,t_2$ and $n$. Then, using the fact that $\beta\leq\frac{1-2H(\ell+2)}{4H}<\frac{1-2H(\ell+2)}{2H},$ we deduce that $2-2H(\ell+\beta+1)>1$, which by \eqref{ineq:Billingsleyc2} implies that the Billingsley criterion holds for $R_{t,1}^{(n)},\dots, R_{t,3}^{(n)}$. Finally, by applying \eqref{eq:remindsfortight12tre} with $p>\frac{1}{2(1-H(1+\ell))}$, we obtain the Billingsley condition for the process $\{R_{t,0}^{(n)}\ \processsymb\ t\in[T_1,T_2]\}$ in the case $H<\frac{1}{2\ell+1}$, regardless of the value of $T_{1}$.

It thus remain to prove \eqref{eq:remindsfortight12}-\eqref{eq:remindsfortight12asd}. For proving \eqref{eq:remindsfortight12asd} in the case $i=0$, we use the inequality \eqref{eq:momentsofincLt} in Lemma \ref{Lem:existLocalderiv}, as well as the fact that $|\mu[g]|\leq \| g\|_{L^{1}(\R)}$, to deduce that there exists a constant $C>0$, such that for all $t>0$,
\begin{align}\label{inbeq:R0R4l2boundprob1}
\|R_{t,0}^{(n)}\|_{L^{2}(\Omega)}
  &\leq\| g\|_{L^{1}(\R)}(\|L_{\frac{1}{n}}^{(\ell)}(\lettrequivabien)\|_{L^{2}(\Omega)}+\|L_{t}^{(\ell)}(\lettrequivabien)-L_{\frac{\lfloor nt\rfloor}{n}}^{(\ell)}(\lettrequivabien)\|_{L^{2}(\Omega)})\nonumber\\
	&\leq C\| g\|_{L^{1}(\R)}^2 n^{H(2\ell+2)-2}.
\end{align}
Recall that $H(2\ell+2\kappa+1)<1$, so that the power of $n$ in the right hand side satisfies the inequality $H(2\ell+2)-2=1-H-2\kappa+H(2\ell+2\kappa+1)<-2\kappa$. Relation \eqref{eq:remindsfortight12asd} then follows from \eqref{inbeq:R0R4l2boundprob1}. To prove \eqref{eq:remindsfortight12tre} we combine \eqref{eq:momentsofincLt} in Lemma \ref{Lem:existLocalderiv} with the inequality $|\mu[g]|\leq \| g\|_{L^{1}(\R)}$,  to write 
\begin{align*}
\|R_{t_2,0}^{(n)}-R_{t_1,0}^{(n)}\|_{L^{2p}(\Omega)}
  &\leq\| g\|_{L^{1}(\R)}(\|L_{t_2}^{(\ell)}(\lettrequivabien)-L_{t_1}^{(\ell)}(\lettrequivabien)\|_{L^{2p}(\Omega)}+\|L_{\frac{\lfloor nt_2\rfloor}{n}}^{(\ell)}(\lettrequivabien)-L_{\frac{\lfloor nt_1\rfloor}{n}}^{(\ell)}(\lettrequivabien)\|_{L^{2p}(\Omega)})\nonumber\\
	&\leq C\| g\|_{L^{1}(\R)}  \rho_n(t_1,t_2)^{1-H(1+\ell)},
\end{align*}
as required. This finishes the proof of \eqref{eq:remindsfortight12tre} and \eqref{eq:remindsfortight12asd} in the case $i=0$.

Next we prove \eqref{eq:remindsfortight12} and \eqref{eq:remindsfortight12asd} in the case $i=1$. Take $0\leq t_1\leq t_2\leq T$ and notice that, by \eqref{eq:R1deff1},  
\begin{align*}
\|R_{t_2,1}^{(n)}-R_{t_1,1}^{(n)}\|_{L^{2}(\Omega)}^2
  &=\frac{1}{4\pi^2}\int_{\R^{4}}\sum_{2\vee(nt_1)\leq i,j\leq  nt_2}\int_{\frac{i-1}{n}}^{\frac{i}{n}}\int_{\frac{j-1}{n}}^{\frac{j}{n}}(\textbf{i}\xi)^{\ell}(\textbf{i}\tilde{\xi})^{\ell}e^{\textbf{i}\xi(X_{\frac{i-1}{n}}-\lettrequivabien-\frac{y}{n^a})+\textbf{i}\tilde{\xi}(X_{\frac{j-1}{n}}-\lettrequivabien-\frac{\tilde{y}}{n^a})}\\
	&\times (e^{-\frac{1}{2}\xi^2\sigma_{i,s}^2}-e^{\textbf{i}\xi\Delta_{i,s}X}) (e^{-\frac{1}{2}\tilde{\xi}^2\sigma_{j,\tilde{s}}^2}-e^{\textbf{i}\tilde{\xi}\Delta_{j,\tilde{s}}X})g(y)g(\tilde{y})d\vec{s}d\vec{\xi} d\vec{y},
\end{align*}
where $\vec{s}:=(s,\tilde{s})$, $\vec{\xi}=(\xi,\tilde{\xi})$, $\vec{y}:=(y,\tilde{y})$. The sum above can be decomposed as 
\begin{align}\label{eq:R1decompA0A0tilde}
\|R_{t_2,1}^{(n)}-R_{t_1,1}^{(n)}\|_{L^{2}(\Omega)}^2
  &=A_{0}^{(n)}+\tilde{A}_{0}^{(n)},
\end{align}
where 
\begin{align}\label{eq:A0defint1}
A_{0}^{(n)}
  &:=\frac{1}{4\pi^2}\int_{\R^{4}}\sum_{\substack{2\vee(nt_1)\leq i,j\leq  nt_2\\|i-j|\geq 2}}\int_{\frac{i-1}{n}}^{\frac{i}{n}}\int_{\frac{j-1}{n}}^{\frac{j}{n}}(\textbf{i}\xi)^{\ell}(\textbf{i}\tilde{\xi})^{\ell}\E[e^{\textbf{i}\xi(X_{\frac{i-1}{n}}-\lettrequivabien-\frac{y}{n^a})+\textbf{i}\tilde{\xi}(X_{\frac{j-1}{n}}-\lettrequivabien-\frac{\tilde{y}}{n^a})}\nonumber\\
	&\times (e^{-\frac{1}{2}\xi^2\sigma_{i,s}^2}-e^{\textbf{i}\xi\Delta_{i,s}X}) (e^{-\frac{1}{2}\tilde{\xi}^2\sigma_{j,\tilde{s}}^2}-e^{\textbf{i}\tilde{\xi}\Delta_{j,\tilde{s}}X})]g(y)g(\tilde{y})d\vec{s}d\vec{\xi} d\vec{y},
\end{align}
and
\begin{align}\label{eq:A0ndef}
\tilde{A}_{0}^{(n)}
  &:=\frac{1}{4\pi^2}\int_{\R^{4}}\sum_{\substack{2\vee(nt_1) \leq i,j\leq  nt_2\\|i-j|\leq 1}}\int_{\frac{i-1}{n}}^{\frac{i}{n}}\int_{\frac{j-1}{n}}^{\frac{j}{n}}(\textbf{i}\xi)^{\ell}(\textbf{i}\tilde{\xi})^{\ell}\E[e^{\textbf{i}\xi(X_{\frac{i-1}{n}}-\lettrequivabien-\frac{y}{n^a})+\textbf{i}\tilde{\xi}(X_{\frac{j-1}{n}}-\lettrequivabien-\frac{\tilde{y}}{n^a})}\nonumber\\
	&\times (e^{-\frac{1}{2}\xi^2\sigma_{i,s}^2}-e^{\textbf{i}\xi\Delta_{i,s}X}) (e^{-\frac{1}{2}\tilde{\xi}^2\sigma_{j,\tilde{s}}^2}-e^{\textbf{i}\tilde{\xi}\Delta_{j,\tilde{s}}X})]g(y)g(\tilde{y})d\vec{s}d\vec{\xi} d\vec{y}.
\end{align}
In order to bound the term $A_{0}^{(n)}$ above, we proceed as follows. First we notice that the expectation appearing in the right-hand side of \eqref{eq:A0defint1} satisfies 
\begin{multline}\label{eq:P1termpaux1aux1}
|\E[e^{\textbf{i}\xi X_{\frac{i-1}{n}}+\textbf{i}\tilde{\xi}X_{\frac{j-1}{n}}}
(e^{-\frac{1}{2}\xi^2\sigma_{i,s}^2}-e^{\textbf{i}\xi\Delta_{i,s}X}) (e^{-\frac{1}{2}\tilde{\xi}^2\sigma_{j,\tilde{s}}^2}-e^{\textbf{i}\tilde{\xi}\Delta_{j,\tilde{s}}X})]|\\
  \begin{aligned}
	&\leq |e^{-\frac{1}{2}\vec{\xi}^*\Sigma(1,1)\vec{\xi}}-e^{-\frac{1}{2}\vec{\xi}^*\Sigma(1,0)\vec{\xi}}|
	+|e^{-\frac{1}{2}\vec{\xi}^*\Sigma(0,0)\vec{\xi}}-e^{-\frac{1}{2}\vec{\xi}^*\Sigma(0,1)\vec{\xi}}|,
\end{aligned}
\end{multline}
where $\Sigma(z,\tilde{z})$ denote the covariance matrix
\begin{align}\label{eq:SigmaZdef}
\Sigma(z,\tilde{z})
  &=\text{Cov}[(X_{\frac{i-1}{n}}+z\Delta_{i,s}X+(1-z^2)^{\frac{1}{2}}\Delta_{i,s}X^{\prime}, X_{\frac{j-1}{n}}+\tilde{z}\Delta_{j,\tilde{s}}X+(1-z^2)^{\frac{1}{2}}\Delta_{j,\tilde{s}}X^{\prime\prime})]
\end{align}
and $X^{\prime},X^{\prime\prime}$ are independent copies of $X$. In order to bound the right hand side of \eqref{eq:P1termpaux1aux1}, we observe that if $z\in\{0,1\}$ and $\vec{\xi}=(\xi,\tilde{\xi})$,
\begin{align*}
|e^{-\frac{1}{2}\vec{\xi}^*\Sigma(z,1)\vec{\xi}}-e^{-\frac{1}{2}\vec{\xi}^*{\Sigma}(z,0)\vec{\xi}}|
  &\leq |\vec{\xi}^*(\Sigma(z,1)-{\Sigma}(z,0))\vec{\xi}|(e^{-\frac{1}{2}\vec{\xi}^*\Sigma(z,1)\vec{\xi}}+e^{-\frac{1}{2}\vec{\xi}^*{\Sigma}(z,0)\vec{\xi}}).
\end{align*}
We deduce from \eqref{eq:estimationip2p} and the fact that
$\Sigma(z,1)_{1,1}=\Sigma(z,0)_{1,1}$ that there exists a constant $C>0$ only depending on $H$, such that 
\begin{align*}
|\vec{\xi}^*(\Sigma(z,1)-{\Sigma}(z,0))\vec{\xi}|+
  &\leq2|\xi\tilde{\xi}||\Sigma(z,1)_{1,2}-\Sigma(z,0)_{1,2}| + \tilde{\xi}^2|\Sigma(z,1)_{2,2}-\Sigma(z,0)_{2,2}|\\
	&\leq 2|\xi\tilde{\xi}||\E[(X_{\frac{i-1}{n}}+z\Delta_{i,s}X )\Delta_{j,\tilde{s}}X]|\\
	&+\tilde{\xi}^2|\text{Var}[X_{\tilde{s}}]-\text{Var}[X_{\frac{j-1}{n}}]-\text{Var}[\Delta_{j,\tilde{s}}\tilde{X}]|
	\leq Cn^{-(2H)\wedge 1}(|\xi\tilde{\xi}|+|\tilde{\xi}|^2).
\end{align*}
Thus, using the fact that for every $\alpha\in [0,1]$ and $x,y\geq0$, 
\begin{align*}
|e^{-x}-e^{-y}|
  &\leq |x-y|^{\alpha}(e^{-x}+e^{-y}),
\end{align*}
we obtain 
\begin{align}\label{eq:diffFouriertransfGaussian}
|e^{-\frac{1}{2}\vec{\xi}^*\Sigma(z,1)\vec{\xi}}-e^{-\frac{1}{2}\vec{\xi}^*{\Sigma}(z,0)\vec{\xi}}|
  &\leq  Cn^{-(2H\kappa)\wedge \kappa}(|\tilde{\xi}|^{2\kappa}+|\xi\tilde{\xi}|^{\kappa})
	(e^{-\frac{1}{2}\vec{\xi}^*\Sigma(z,1)\vec{\xi}}+e^{-\frac{1}{2}\vec{\xi}^*{\Sigma}(z,0)\vec{\xi}})\nonumber\\
	&=  Cn^{-(2H\kappa)\wedge \kappa}(|\xi|^{\kappa}+|\tilde{\xi}|^{\kappa})|\tilde{\xi}|^{\kappa}
	(e^{-\frac{1}{2}\vec{\xi}^*\Sigma(z,1)\vec{\xi}}+e^{-\frac{1}{2}\vec{\xi}^*{\Sigma}(z,0)\vec{\xi}}).
\end{align}
Therefore, by using  \eqref{eq:A0defint1}, \eqref{eq:P1termpaux1aux1} and \eqref{eq:diffFouriertransfGaussian},
\begin{align}\label{eq:R12bound1aux}
|A_{0}^{(n)}|
  &\leq n^{-(2H\kappa)\wedge \kappa}\int_{\R^{4}}\sum_{\substack{2\vee(nt_1)\leq i,j\leq  nt_2\\|i-j|\geq 2}}\int_{\frac{i-1}{n}}^{\frac{i}{n}}\int_{\frac{j-1}{n}}^{\frac{j}{n}}\sum_{z\in\{0,1\}}(|\xi|^{\kappa}+|\tilde{\xi}|^{\kappa})|\xi|^{\ell}|\tilde{\xi}|^{\ell+\kappa}|g(y)g(\tilde{y})|\nonumber\\
	&\times (e^{-\frac{1}{2}\vec{\xi}^{*}\Sigma(z,1)\vec{\xi}}+e^{-\frac{1}{2}\vec{\xi}^{*}{\Sigma}(z,0)\vec{\xi}})d\vec{s}d\vec{y}d\vec{\xi}\nonumber\\
  &\leq  Cn^{-(2H\kappa)\wedge \kappa} \int_{\R^{4}}\sum_{\substack{2\vee(nt_1)\leq i,j\leq nt_2\\|i-j|\geq 2}}\int_{\frac{i-1}{n}}^{\frac{i}{n}}\int_{\frac{j-1}{n}}^{\frac{j}{n}}\sum_{z\in\{0,1\}}(|\xi|^{\kappa}+|\tilde{\xi}|^{\kappa})|\xi|^{\ell}|\tilde{\xi}|^{\ell+\kappa}|g(y)g(\tilde{y})|\nonumber\\
	&\times  (e^{-\frac{1}{2}\vec{\xi}^{*}\Sigma(z,1)\vec{\xi}}+e^{-\frac{1}{2}\vec{\xi}^{*}{\Sigma}(z,0)\vec{\xi}}) d\vec{s}d\vec{y}d\vec{\xi},
\end{align}
which by equation \eqref{eq:Lemmaauxgoal} in Lemma \ref{lem:techintegralbound}, implies that 
\begin{align}\label{eq:A0nfinalbound}
|A_{0}^{(n)}|
& \leq  C\|g\|_{L^1(\R)}^2n^{-(2H\kappa)\wedge \kappa}  \sum_{\substack{2\vee(nt_1)\leq i,j\leq nt_2\\|i-j|\geq 2}} \big(\frac{i\vee j}{n}\big)^{-H}\big(\frac{i\wedge j\wedge|j-i|}{n}\big)^{-H(2\ell+2\kappa+1)}\nonumber\\
& \leq  C\|g\|_{L^1(\R)}^2n^{-(2H\kappa)\wedge \kappa}  \int_{[\frac{\lfloor nt_1\rfloor}{n},\frac{\lfloor nt_2\rfloor}{n}]^2}(u\vee v)^{-H}(u\wedge v\wedge|v-u|)^{-H(2\ell+2\kappa+1)}du dv.
\end{align} 
Using basic calculus techniques, we can show that there exists a constant $C>0$ only depending on $H$, such that for all $0\leq a<b$ and $\alpha\in(0,1)$,

\begin{align}\label{eq:integralbound}
 \int_{[a,b]^2}(u\vee v)^{-H}(u\wedge v\wedge|v-u|)^{-\alpha}du dv
 \leq C\times 
\left\{\begin{array}{ccc}
\delta^{-H}|b-a|^{2-\alpha}  &\text{ if }  & a,b\in[\delta,T]\\
|b-a|^{2-\alpha-H}             &\text{ if }  & a,b\in[0,T].
\end{array}\right.
\end{align}
By \eqref{eq:A0nfinalbound} and \eqref{eq:integralbound},  there exists a constant $C>0$ such that

\begin{align}\label{eq:boundA0final} 
|A_{0}^{(n)}|
& \leq  Cn^{-(2H\kappa)\wedge \kappa}\|g\|_{L^1(\R)}^2\times 
\left\{\begin{array}{ccc}
\delta^{-H}\rho_n(t_1,t_2)^{2-H(2\ell+2\kappa+1)}  &\text{ if }  & t_1,t_2\in[\delta,T]\\
\rho_n(t_1,t_2)^{2-2H(\ell+\kappa+1)}             &\text{ if }  & t_1,t_2\in[0,T].
\end{array}\right.
\end{align}

\noindent Next we bound $\tilde{A}_{0}^{(n)}$. To this end, we notice that by \eqref{eq:A0ndef},
\begin{align*}
|\tilde{A}_{0}^{(n)}|
  &\leq \frac{1}{4\pi^2}\int_{\R^{4}}\sum_{\substack{2\vee(nt_1)\leq i,j\leq  nt_2\\|i-j|\leq 1}}\sum_{z,\tilde{z}\in\{0,1\}}\int_{\frac{i-1}{n}}^{\frac{i}{n}}\int_{\frac{j-1}{n}}^{\frac{j}{n}}|\xi \tilde{\xi}|^{\ell}e^{-\frac{1}{2}\vec{\xi}^*\Sigma(z,\tilde{z})\vec{\xi}}|g(y)g(\tilde{y})|d\vec{s}d\vec{\xi} d\vec{y}.
\end{align*}
Therefore, using equation \eqref{eq:Lemmaauxgoal2} in Lemma \ref{lem:techintegralbound},  we get that 
\begin{align*}
|\tilde{A}_{0}^{(n)}|
  &\leq C\|g\|_{L^1(\R)}^2n^{2H(\ell+1)-2}\sum_{\substack{2\vee(nt_1)\leq i,j\leq  nt_2\\|i-j|\leq 1}}(i\vee j)^{-H}.
\end{align*}

\noindent Thus, using the fact that every $i,j\geq 2$ such that $|j-i|\leq 1$, satisfies the inequality 
\begin{align}\label{eq:bounimaxjint}
n^{H(2\ell+2\kappa+1)}\big(\frac{i\vee j}{n}\big)^{-H}
  \leq C\big(\frac{i\vee j}{n}\big)^{-H}\big(\frac{1+ |j-i|}{n}\big)^{-H(2\ell+2\kappa+1)},
\end{align}
for some $C>0$ only depending on $H$, we obtain
\begin{align*} 
|\tilde{A}_{0}^{(n)}|
  &\leq C\|g\|_{L^1(\R)}^2n^{-2H\kappa-2}\sum_{\substack{2\vee(nt_1)\leq i,j\leq  nt_2\\|i-j|\leq 1}}(i\vee j)^{-H}\big(\frac{1+|j-i|}{n}\big)^{-H(2\ell+2\kappa+1)}\\
	&\leq C\|g\|_{L^1(\R)}^2n^{-2H\kappa}\int_{[\frac{\lfloor nt_1\rfloor}{n},\frac{\lfloor nt_2\rfloor}{n}]^2}(u\vee v)^{-H}|v-u|^{-H(2\ell+2\kappa+1)}dudv\\
	&\leq C\|g\|_{L^1(\R)}^2n^{-2H\kappa}\int_{[\frac{\lfloor nt_1\rfloor}{n},\frac{\lfloor nt_2\rfloor}{n}]^2}(u\vee v)^{-H}(u\wedge v\wedge|v-u|)^{ -H(2\ell+2\kappa+1)}dudv.
\end{align*}
Combining the previous inequality with \eqref{eq:integralbound}, we obtain 
\begin{align}\label{eq:A0tildefinalbound}
|\tilde{A}_{0}^{(n)}|
  &\leq  C\|g\|_{L^1(\R)}^2n^{-2H\kappa}\times
\left\{\begin{array}{ccc}
\delta^{-H}\rho_n(t_1,t_2)^{2-H(2\ell+2\kappa+1)}  &\text{ if }  & t_1,t_2\in[\delta,T]\\
\rho_n(t_1,t_2)^{2-2H(\ell+\kappa+1)}             &\text{ if }  & t_1,t_2\in[0,T].
\end{array}\right.
\end{align}
From \eqref{eq:R1decompA0A0tilde}, \eqref{eq:boundA0final} and \eqref{eq:A0tildefinalbound}, we obtain  
\begin{align}\label{eq:R1nfinalbound}
\|R_{t_2,1}^{(n)}-R_{t_1,1}^{(n)}\|_{L^{2}(\Omega)}^2
& \leq  Cn^{-(2H\kappa)\wedge\kappa}\|g\|_{L^1(\R)}^2	\left\{\begin{array}{ccc}
\delta^{-H}\rho_n(t_1,t_2)^{2-H(2\ell+2\kappa+1)}  &\text{ if }  & t_1,t_2\in[\delta,T]\\
\rho_n(t_1,t_2)^{2-2H(\ell+\kappa+1)}             &\text{ if }  & t_1,t_2\in[0,T].
\end{array}\right.
\end{align} 
Relations \eqref{eq:remindsfortight12} and \eqref{eq:remindsfortight12asd} for the case $i=1$ follow from \eqref{eq:R1nfinalbound}.\\

%=========================================================
\noindent Next we prove \eqref{eq:remindsfortight12} and \eqref{eq:remindsfortight12asd} the case $i=2$. By \eqref{eq:R2deff1},

\begin{align*}
\|R_{t_2,2}^{(n)}-R_{t_1,2}^{(n)}\|_{L^{2}(\Omega)}^2
  &=\frac{(-1)^{\ell}}{4\pi^2}\int_{\R^{4}}\sum_{2\vee(nt_1)\leq i,j\leq nt_2}\int_{\frac{i-1}{n}}^{\frac{i}{n}}\int_{\frac{j-1}{n}}^{\frac{j}{n}}(\xi \tilde{\xi})^{\ell}\E[e^{\textbf{i}\xi(X_{\frac{i-1}{n}}-\lettrequivabien-\frac{y}{n^a})+\textbf{i}\tilde{\xi}(X_{\frac{j-1}{n}}-\lettrequivabien-\frac{\tilde{y}}{n^a})}]\\
	&\times(1-e^{-\frac{1}{2}\xi^2\sigma_{i,s}^2})(1-e^{-\frac{1}{2}\tilde{\xi}^2\sigma_{j,\tilde{s}}^2})g(y)g(\tilde{y})d\vec{s}d\vec{y}d\vec{\xi}.
\end{align*}
As before, we decompose this sum as  
\begin{align}\label{eq:R2A5Atilde5}
\|R_{t_2,2}^{(n)}-R_{t_1,2}^{(n)}\|_{L^{2}(\Omega)}^2
  &=A_{5}^{(n)}+\tilde{A}_5^{(n)},
\end{align}
where
\begin{align*}
A_{5}^{(n)}
  &:=\frac{(-1)^{\ell}}{4\pi^2}\int_{\R^{4}}\sum_{\substack{2\vee(nt_1)\leq i,j\leq nt_2\\|i-j|\geq 2}}\int_{\frac{i-1}{n}}^{\frac{i}{n}}\int_{\frac{j-1}{n}}^{\frac{j}{n}}(\xi \tilde{\xi})^{\ell}\E[e^{\textbf{i}\xi(X_{\frac{i-1}{n}}-\lettrequivabien-\frac{y}{n^a})+\textbf{i}\tilde{\xi}(X_{\frac{j-1}{n}}-\lettrequivabien-\frac{\tilde{y}}{n^a})}]\\
	&\times(1-e^{-\frac{1}{2}\xi^2\sigma_{i,s}^2})(1-e^{-\frac{1}{2}\tilde{\xi}^2\sigma_{j,\tilde{s}}^2})g(y)g(\tilde{y})d\vec{s}d\vec{y}d\vec{\xi}
\end{align*}
and 
\begin{align}\label{eq:A5tildedef}
\tilde{A}_{5}^{(n)}
  &:=\frac{(-1)^{\ell}}{4\pi^2}\int_{\R^{4}}\sum_{\substack{2\vee(nt_1)\leq i,j\leq nt_2\\|i-j|\leq 1}}\int_{\frac{i-1}{n}}^{\frac{i}{n}}\int_{\frac{j-1}{n}}^{\frac{j}{n}}(\xi \tilde{\xi})^{\ell}\E[e^{\textbf{i}\xi(X_{\frac{i-1}{n}}-\lettrequivabien-\frac{y}{n^a})+\textbf{i}\tilde{\xi}(X_{\frac{j-1}{n}}-\lettrequivabien-\frac{\tilde{y}}{n^a})}]\nonumber\\
	&\times(1-e^{-\frac{1}{2}\xi^2\sigma_{i,s}^2})(1-e^{-\frac{1}{2}\tilde{\xi}^2\sigma_{j,\tilde{s}}^2})g(y)g(\tilde{y})d\vec{s}d\vec{y}d\vec{\xi}.
\end{align}
To bound $A_{5}^{(n)}$ we proceed as follows. Let $\Lambda$ denote the covariance matrix of $(X_{\frac{i-1}{n}},X_{\frac{j-1}{n}})$. Using the inequality
\begin{align}\label{eq:auxexpfuture}
|e^{-x}-1|
  \leq 2|x|^{\frac{\kappa}{2}},
\end{align}
valid for every $x\geq0$, we deduce that
\begin{align}\label{eq:R2boundpref}
|A_{5}^{(n)}|
  &\leq C\int_{\R^{4}}\sum_{\substack{2\leq i,j\leq nt\\|i-j|\geq 2}}\sigma_{s,i}^{\kappa}\sigma_{\tilde{s},j}^{\kappa}\int_{\frac{i-1}{n}}^{\frac{i}{n}}\int_{\frac{j-1}{n}}^{\frac{j}{n}}|\xi \tilde{\xi}|^{\ell+\kappa}|g(y)g(\tilde{y})|e^{-\frac{1}{2}\vec{\xi}^*\Lambda \vec{\xi}}d\vec{s}d\vec{y}d\vec{\xi}.
\end{align}
Therefore, using Lemma \ref{lem:techintegralbound}, as well as the fact that $\sigma_{s,i}^2,\sigma_{\tilde{s},j}^2\leq n^{-2H}$, we get
\begin{align*}
|A_{5}^{(n)}|
	&\leq Cn^{-2H\kappa}\|g\|_{L^{1}(\R)}^2\frac{1}{n^2}\sum_{\substack{2\vee(nt_1)\leq i,j\leq nt_2\\|i-j|\geq 2}}\big(\frac{i\vee j}{n} \big)^{-H}\big(\frac{i\wedge j\wedge|j-i|}{n}\big)^{-H(2\ell+2\kappa+1)}\\
	&\leq Cn^{-2H\kappa}\|g\|_{L^{1}(\R)}^2  \int_{[\frac{\lfloor nt_1\rfloor}{n},\frac{\lfloor nt_2\rfloor}{n}]^2}(u\vee v)^{-H}(u\wedge v\wedge|v-u|)^{-H(2\ell+2\kappa+1)}dudv.
\end{align*}
Thus, using \eqref{eq:integralbound} we get that 
\begin{align}\label{eq:A5finalbound}
|A_{5}^{(n)}|
	&\leq Cn^{-2H\kappa}\|g\|_{L^{1}(\R)}^2
	\left\{\begin{array}{ccc}
\delta^{-H}\rho_{n}(t_1,t_2)^{2-H(2\ell+2\kappa+1)}  &\text{ if }  & t_1,t_2\in[\delta,T]\\
\rho_{n}(t_1,t_2)^{2-H(2\ell+2\kappa+2)}             &\text{ if }  & t_1,t_2\in[0,T].
\end{array}\right.
\end{align}

\noindent For handling the term $|\tilde{A}_{5}^{(n)}|$, the Fourier transform approach does not give sharp enough bounds, so we will undo the Fourier transform procedure in the following way: first we write 
\begin{align*}
\tilde{A}_{5}^{(n)}
  &=\frac{(-1)^{\ell}}{4\pi^2}\int_{\R^{4}}\sum_{\substack{2\vee(nt_1)\leq i,j\leq nt_2\\|i-j|\leq 1}}\int_{\frac{i-1}{n}}^{\frac{i}{n}}\int_{\frac{j-1}{n}}^{\frac{j}{n}}(\xi \tilde{\xi})^{\ell}\E[e^{\textbf{i}\xi(X_{\frac{i-1}{n}}-\lettrequivabien-\frac{y}{n^H})+\textbf{i}\tilde{\xi}(X_{\frac{j-1}{n}}-\lettrequivabien-\frac{\tilde{y}}{n^H})}]\\
	&\times(1-e^{\textbf{i}\xi\Delta_{i,s}X^{\prime}})(1-e^{\textbf{i}\tilde{\xi}\Delta_{j,\tilde{s}}X^{\prime\prime}})g(y)g(\tilde{y})d\vec{s}d\vec{y}d\vec{\xi},
\end{align*}
where $X^{\prime}$ and $X^{\prime\prime}$ are independent copies of $X$. Then, by  \eqref{eq:gfourier}, 
\begin{align*}
\tilde{A}_{5}^{(n)}
  &=  n^{2a(\ell+1)}\sum_{\substack{2\vee(nt_1)\leq i,j\leq nt_2\\|i-j|\leq 1}}\int_{\frac{i-1}{n}}^{\frac{i}{n}}\int_{\frac{j-1}{n}}^{\frac{j}{n}} \E\big[\big(g^{(\ell)}(n^H(X_{\frac{i-1}{n}}-\lettrequivabien))-g^{(\ell)}(n^H(X_{\frac{i-1}{n}}+\Delta_{i,s}X^{\prime}-\lettrequivabien))\big)\\
	&\quad\quad\quad\quad\times \big(g^{(\ell)}(n^H(X_{\frac{j-1}{n}}-\lettrequivabien))-g^{(\ell)}(n^H(X_{\frac{j-1}{n}}+\Delta_{j,\tilde{s}}X^{\prime\prime}-\lettrequivabien))\big)\big]d\vec{s}.
\end{align*}
Applying Cauchy-Schwarz in the previous inequality we deduce that
\begin{align}\label{eq:tildeA5firstbound}
|\tilde{A}_{5}^{(n)}|
  &\leq  n^{2a(\ell+1)}\sum_{\substack{2\vee(nt_1)\leq i,j\leq nt_2\\|i-j|\leq 1}}\int_{\frac{i-1}{n}}^{\frac{i}{n}}\int_{\frac{j-1}{n}}^{\frac{j}{n}} \E\big[\big(g^{(\ell)}(n^H(X_{\frac{i-1}{n}}-\lettrequivabien))-g^{(\ell)}(n^H(X_{\frac{i-1}{n}}+\Delta_{i,s}X^{\prime}-\lettrequivabien))\big)^2]^{\frac{1}{2}}\nonumber\\
	&\quad\quad\quad\quad\times \E\big[\big(g^{(\ell)}(n^H(X_{\frac{j-1}{n}}-\lettrequivabien))-g^{(\ell)}(n^H(X_{\frac{j-1}{n}}+\Delta_{j,\tilde{s}}X^{\prime\prime}-\lettrequivabien))\big)^2\big]^{\frac{1}{2}}d\vec{s}.
\end{align}
The two expectations in the right-hand side can be bounded in the following manner
\begin{multline*}
\E\big[\big(g^{(\ell)}(n^a(X_{\frac{i-1}{n}}-\lettrequivabien))-g^{(\ell)}(n^a(X_{\frac{i-1}{n}}+\Delta_{i,s}X^{\prime}-\lettrequivabien))\big)^2]\\
\begin{aligned}
  &\leq 2\E\big[ g^{(\ell)}(n^a(X_{\frac{i-1}{n}}-\lettrequivabien))^2]
	+2\E\big[ g^{(\ell)}(n^a(X_{\frac{i-1}{n}}+\Delta_{i,s}X^{\prime}-\lettrequivabien))^2]\\
	&= 2\int_{\R}g^{(\ell)}(x)^2(\phi_{n^{2a-2H}( i-1 )^{2H}}(x)+\phi_{n^{2a-2H}((i-1)^{2H}+ (ns-i+1)^{2H})}(x))dx\\
	&\leq  2\|g^{(\ell)}\|_{L^2(\R)}^2\phi_{n^{2a-2H}( i-1 )^{2H}}(0),
\end{aligned}
\end{multline*}
and thus,
\begin{align*}
\E\big[\big(g^{(\ell)}(n^a(X_{\frac{i-1}{n}}-\lettrequivabien))-g^{(\ell)}(n^a(X_{\frac{i-1}{n}}+\Delta_{i,s}X^{\prime}-\lettrequivabien))\big)^2]
  &\leq C n^{H-a}i^{-H}\|g^{(\ell)}\|_{L^2(\R)}^2.	
 \end{align*}
Similarly, we have that
\begin{align*}
\E\big[\big(g^{(\ell)}(n^a(X_{\frac{j-1}{n}}-\lettrequivabien))-g^{(\ell)}(n^a(X_{\frac{j-1}{n}}+\Delta_{j,\tilde{s}}X^{\prime\prime}-\lettrequivabien))\big)^2\big]
  &\leq C n^{ H- a}j^{-H}\|g^{(\ell)}\|_{L^2(\R)}^2.	
\end{align*}
Therefore, by \eqref{eq:tildeA5firstbound}, there exist $C,C^{\prime}>0$  such that
\begin{align*}
|\tilde{A}_{5}^{(n)}|
  &\leq  Cn^{2a \ell-2+2H}\sum_{\substack{2\vee(nt_1)\leq i,j\leq nt_2\\|i-j|\leq 1}}(ij)^{-\frac{H}{2}}\|g^{(\ell)}\|_{L^2(\R)}^2\\
  &\leq  C^{\prime}n^{2H(\ell+1)-2}\|g^{(\ell)}\|_{L^2(\R)}^2\sum_{\substack{2\vee(nt_1)\leq i,j\leq nt_2\\|i-j|\leq 1}}\big(\frac{i\vee j}{n}\big)^{-H},
\end{align*}
where in the last inequality we used the condition $a\leq H$. Combining the previous inequality with \eqref{eq:bounimaxjint}, we obtain
\begin{align*}
|\tilde{A}_{5}^{(n)}|
	&\leq C\|g^{(\ell)}\|_{L^2(\R)}^2n^{-2H\kappa-2}\sum_{\substack{2\vee(nt_1)\leq i,j\leq  nt_2\\|i-j|\leq 1}}\big(\frac{i\vee j}{n}\big)^{-H}\big(\frac{1+|j-i|}{n}\big)^{-2H(\ell+\kappa+1)}\\
	&\leq C\|g^{(\ell)}\|_{L^2(\R)}^2n^{-2H\kappa}\int_{[\frac{\lfloor nt_1\rfloor}{n},\frac{\lfloor nt_2\rfloor}{n}]^2}(u\vee v)^{-H}|v-u|^{-2H(\ell+\kappa+1)}dudv\\
	&\leq C\|g^{(\ell)}\|_{L^2(\R)}^2n^{-2H\kappa}\int_{[\frac{\lfloor nt_1\rfloor}{n},\frac{\lfloor nt_2\rfloor}{n}]^2}(u\vee v)^{-H}(u\wedge v\wedge|v-u|)^{-2H(\ell+\kappa+1)}dudv.
\end{align*}
Thus, by \eqref{eq:integralbound} we conclude that 
\begin{align}\label{eq:Atilde5finalbound} 
|\tilde{A}_{5}^{(n)}|
 \leq C\|g^{(\ell)}\|_{L^2(\R)}^2n^{-2H\kappa}
\left\{\begin{array}{ccc}
\delta^{-H}\rho_n(t_1,t_2)^{2-H(2\ell+2\kappa+1)}  &\text{ if }  & t_1,t_2\in[\delta,T]\\
\rho_n(t_1,t_2)^{2-H(2\ell+2\kappa+2)}             &\text{ if }  & t_1,t_2\in[0,T].
\end{array}\right.
\end{align}
From \eqref{eq:A5finalbound} and \eqref{eq:Atilde5finalbound}, we conclude that 
\begin{multline}\label{ineq:case1}
\|R_{t_2,2}^{(n)}-R_{t_1,2}^{(n)}\|_{L^{2}(\Omega)}^2\\
\begin{aligned}
 \leq C(\|g\|_{L^1(\R)}^2+\|g^{(\ell)}\|_{L^2(\R)}^2)n^{-2H\kappa}
\left\{\begin{array}{ccc}
\delta^{-H}\rho_n(t_1,t_2)^{2-H(2\ell+2\kappa+1)}  &\text{ if }  & t_1,t_2\in[\delta,T]\\
\rho_n(t_1,t_2)^{2-H(2\ell+2\kappa+2)}             &\text{ if }  & t_1,t_2\in[0,T].
\end{array}\right.
\end{aligned}
\end{multline}
Relations \eqref{eq:remindsfortight12} and \eqref{eq:remindsfortight12asd} in the case $i=2$ are obtained by combining \eqref{eq:R2A5Atilde5}, \eqref{eq:A5finalbound} and \eqref{ineq:case1}.\\

\noindent It thus remain to prove \eqref{eq:remindsfortight12} and \eqref{eq:remindsfortight12asd} in the case  $i=3$. Notice that by \eqref{eq:R3tildedef},
\begin{align*}
\|R_{t_2,3}^{(n)}-R_{t_2,3}^{(n)}\|_{L^{2}(\Omega)}^2
  &=\frac{1}{4\pi^2}\int_{\R^{4}}\sum_{2\vee(nt_1)\leq i,j\leq nt_2}\int_{\frac{i-1}{n}}^{\frac{i}{n}}\int_{\frac{j-1}{n}}^{\frac{j}{n}}(\textbf{i}\xi)^{\ell}(\textbf{i}\tilde{\xi})^{\ell}\E[e^{\textbf{i}\xi X_{s}+\textbf{i}\tilde{\xi} X_{\tilde{s}}}]\\
	&\times (e^{-\textbf{i}\xi\frac{y}{n^{a}}}-1 )(e^{-\textbf{i}\tilde{\xi}\frac{\tilde{y}}{n^{a}}}-1 )	g(y)g(\tilde{y})d\vec{s}d\vec{y}d\vec{\xi}.
 \end{align*}
Thus, we can write
\begin{align}\label{eqLnormR3decompprob1}
\|R_{t_2,3}^{(n)}-R_{t_2,3}^{(n)}\|_{L^{2}(\Omega)}^2
  &=A_{6}^{(n)}+\tilde{A}_{6}^{(n)},
\end{align}
where 
\begin{align*}
A_{6}^{(n)}
  &=\frac{1}{4\pi^2}\int_{\R^{4}}\sum_{\substack{2\vee(nt_1)\leq i,j\leq nt_2\\|i-j|\geq 2}}\int_{\frac{i-1}{n}}^{\frac{i}{n}}\int_{\frac{j-1}{n}}^{\frac{j}{n}}(\textbf{i}\xi)^{\ell}(\textbf{i}\tilde{\xi})^{\ell}\E[e^{\textbf{i}\xi X_{s}+\textbf{i}\tilde{\xi} X_{\tilde{s}}}]\\
	&\times (e^{-\textbf{i}\xi\frac{y}{n^{a}}}-1 )(e^{-\textbf{i}\tilde{\xi}\frac{\tilde{y}}{n^{a}}}-1 )	g(y)g(\tilde{y})d\vec{s}d\vec{y}d\vec{\xi}\\
\tilde{A}_{6}^{(n)}
  &=\frac{1}{4\pi^2}\int_{\R^{4}}\sum_{\substack{2\vee(nt_1)\leq i,j\leq nt_2\\|i-j|\leq 1}}\int_{\frac{i-1}{n}}^{\frac{i}{n}}\int_{\frac{j-1}{n}}^{\frac{j}{n}}(\textbf{i}\xi)^{\ell}(\textbf{i}\tilde{\xi})^{\ell}\E[e^{\textbf{i}\xi X_{s}+\textbf{i}\tilde{\xi} X_{\tilde{s}}}]\\
	&\times (e^{-\textbf{i}\xi\frac{y}{n^{a}}}-1 )(e^{-\textbf{i}\tilde{\xi}\frac{\tilde{y}}{n^{a}}}-1 )	g(y)g(\tilde{y})d\vec{s}d\vec{y}d\vec{\xi}.
\end{align*}
 To bound the term $A_{6}^{(n)}$, we notice that 
\begin{align}\label{eq:complexexpbound}
|e^{-\textbf{i}x}-1|
  \leq 2 |x|^{\kappa},
\end{align}
for every $x\in\R$. From here it follows that 
\begin{align}\label{eq:R3boundprefs}
A_{6}^{(n)}
  &\leq Cn^{-2a \kappa }\int_{\R^{4}}\sum_{\substack{2\vee(nt_1)\leq i,j\leq nt_2\\|i-j|\geq 2}}\int_{\frac{i-1}{n}}^{\frac{i}{n}}\int_{\frac{j-1}{n}}^{\frac{j}{n}}
	|g(y)g(\tilde{y})||y\tilde{y}|^{ a}|\xi\tilde{\xi}|^{\ell+a}e^{-\frac{1}{2}\vec{\xi}^*\Sigma(1,1) \vec{\xi}}
	d\vec{s}d\vec{y}d\vec{\xi}.
\end{align}
Consequently, by first applying equation \eqref{eq:Lemmaauxgoal} in Lemma \ref{lem:techintegralbound} to the right hand side of \eqref{eq:R3boundprefs}, and then the condition $a\leq H$, we get 
\begin{align*}
A_{6}^{(n)}
  &\leq C\|w^{ \kappa}g\|_{L^{1}(\R)}^2n^{-2a \kappa-2}\sum_{\substack{2\vee(nt_1)\leq i,j\leq nt_2\\|i-j|\geq 2}}\big(\frac{i\vee j}{n}\big)^{-H}\big(\frac{i\wedge j\wedge|j-i|}{n}\big)^{-H(2\ell+1+2\kappa)}\\
	&\leq C\|w^{ \kappa}g\|_{L^{1}(\R)}^2n^{-2a \kappa }\int_{[\frac{\lfloor nt_1\rfloor}{n},\frac{\lfloor nt_2\rfloor}{n}]^2}(u\vee v\big)^{-H} (u\wedge w\wedge|v-u|)^{-H(2\ell+1+2\kappa)}dudv.
\end{align*}
Combining the previous inequality with \eqref{eq:integralbound}, we conclude that 
\begin{align}\label{eq:A6boundfinal}
A_{6}^{(n)}
  &\leq C\|w^{ \kappa}g\|_{L^{1}(\R)}^2n^{-2a \kappa }\times \left\{\begin{array}{ccc}
\delta^{-H}\rho_n(t_1,t_2)^{2-H(2\ell+2\kappa+1)}  &\text{ if }  & t_1,t_2\in[\delta,T]\\
\rho_n(t_1,t_2)^{2-H(2\ell+2\kappa+2)}             &\text{ if }  & t_1,t_2\in[0,T].
\end{array}\right.
\end{align}
To handle the term $\tilde{A}_{6}^{(n)}$, we use the bound \eqref{eq:complexexpbound} to get 
\begin{align}\label{eq:R3boundprefsprime}
\tilde{A}_{6}^{(n)}
  &\leq Cn^{-2a\kappa}\int_{\R^{4}}\sum_{\substack{2\vee(nt_1)\leq i,j\leq nt_2\\|i-j|\leq 1}}\int_{\frac{i-1}{n}}^{\frac{i}{n}}\int_{\frac{j-1}{n}}^{\frac{j}{n}}
	|g(y)g(\tilde{y})||y\tilde{y}|^{\kappa}|\xi\tilde{\xi}|^{\ell+\kappa}e^{-\frac{1}{2}\vec{\xi}^*\Sigma(1,1) \vec{\xi}}
	d\vec{s}d\vec{y}d\vec{\xi}.
\end{align}
Thus, proceeding as before, we can apply equation \eqref{eq:Lemmaauxgoal2} in Lemma \ref{lem:techintegralbound} as well as \eqref{eq:bounimaxjint},  to obtain the inequality 
\begin{align*}
|\tilde{A}_{6}^{(n)}|
  &\leq C\|w^{\kappa}g\|_{L^{1}(\R)}^2n^{-2a\kappa+H(2\ell+2\kappa+1)-2}\sum_{\substack{2\vee(nt_1)\leq i,j\leq nt_2\\|i-j|\leq 1}}\big(\frac{i\vee j}{n}\big)^{-H}\\
	&\leq C\|w^{\kappa}g\|_{L^{1}(\R)}^2n^{-2a\kappa-2}\sum_{\substack{2\vee(nt_1)\leq i,j\leq nt_2\\|i-j|\leq 1}}\big(\frac{i\vee j}{n}\big)^{-H}
	\big(\frac{1+|i-j|}{n}\big)^{-H(2\ell +2\kappa+1)}\\
	&\leq C\|w^{\kappa}g\|_{L^{1}(\R)}^2n^{-2a\kappa}\int_{[\frac{\lfloor nt_1\rfloor}{n},\frac{\lfloor nt_2\rfloor}{n}]^2} (u\vee v)^{-H}
  |v-u|^{-H(2\ell+2\kappa +1)}dudv.
\end{align*}
Thus, by \eqref{eq:integralbound} we get 
\begin{align}\label{eq:Atilde6boundfinal}
|\tilde{A}_{6}^{(n)}|
  &\leq C\|w^{ \kappa}g\|_{L^{1}(\R)}^2n^{-2a \kappa}\times \left\{\begin{array}{ccc}
\delta^{-H}\rho_n(t_1,t_2)^{2-H(2\ell+2\kappa+1)}  &\text{ if }  & t_1,t_2\in[\delta,T]\\
\rho_n(t_1,t_2)^{2-H(2\ell+2\kappa+2)}             &\text{ if }  & t_1,t_2\in[0,T].
\end{array}\right.
\end{align}
Combining \eqref{eqLnormR3decompprob1}, \eqref{eq:A6boundfinal} and \eqref{eq:Atilde6boundfinal}, we obtain 
\begin{align}\label{eq:Rt3case1final}
\|{R}_{t_2,3}^{(n)}-{R}_{t_1,3}^{(n)}\|_{L^{2}(\Omega)}^{2}
  &\leq C\|w^{ \kappa}g\|_{L^{1}(\R)}^2n^{-2a \kappa}\times 
	\left\{\begin{array}{ccc}
\delta^{-H}\rho_n(t_1,t_2)^{2-H(2\ell+2\kappa+1)}  &\text{ if }  & t_1,t_2\in[\delta,T]\\
\rho_n(t_1,t_2)^{2-H(2\ell+2\kappa+2)}             &\text{ if }  & t_1,t_2\in[0,T],
\end{array}\right.
\end{align}
which gives \eqref{eq:remindsfortight12} and \eqref{eq:remindsfortight12asd} in the case  $i=3$. The proof is now complete.

%=======================================================================================================
%=======================================================================================================
\section{Proof of Theorem \ref{thm:main2p}}\label{sec:firstfluc2t2} 
%=======================================================================================================
%=======================================================================================================
\noindent Suppose that $H<\frac{1}{2\ell+3}$ and let $\kappa\in(0,\frac{1}{2})$ be such that $H(2\ell+2\kappa+3)<1$. Define $R_{t,3}^{(n)}$  by \eqref{eq:R3tildedef}. Using the identity 
\begin{align*}
e^{\textbf{i}\xi (X_{s}-\lettrequivabien)}(e^{-\textbf{i}\xi\frac{y}{n^{a}}}-1)
  &=e^{\textbf{i}\xi (X_{s}-\lettrequivabien)}(e^{-\textbf{i}\xi\frac{y}{n^{a}}}-1+\textbf{i}\xi\frac{y}{n^{a}})-\textbf{i}\xi\frac{y}{n^{a}}e^{\textbf{i}\xi (X_{s}-\lettrequivabien)},
\end{align*}
we can show that  
\begin{align}\label{eq:R3tndecomp1q}
R_{t,3}^{(n)}
  &=R_{t,4}^{(n)}-\frac{1}{2\pi n^a}\int_{\R^{2}}\sum_{i=2}^{\lfloor nt\rfloor}\int_{\frac{i-1}{n}}^{\frac{i}{n}}(\textbf{i}\xi)^{\ell+1}e^{\textbf{i}\xi (X_{s}-\lettrequivabien)}yg(y)dsdyd\xi,
\end{align}
where 
\begin{align*}
R_{t,4}^{(n)}
  &:=\frac{1}{2\pi}\int_{\R^{2}}\sum_{i=2}^{\lfloor nt\rfloor}\int_{\frac{i-1}{n}}^{\frac{i}{n}}(\textbf{i}\xi)^{\ell}e^{\textbf{i}\xi (X_{s}-\lettrequivabien)}(e^{-\textbf{i}\xi\frac{y}{n^{a}}}-1+\textbf{i}\xi\frac{y}{n^{a}})g(y)dsdyd\xi.
\end{align*}
By applying the Fourier representation \eqref{eq:FourierrepLprime} in \eqref{eq:R3tndecomp1q}, and then combining the resulting identity with \eqref{eq:step2decompwithR1}, we obtain
\begin{align}\label{eq:step2decompwithRcase2qd}
n^a(n^{a(\ell+1)-1}G_{t,\lettrequivabien}^{(n,\ell)}[g]+\mu[g]L_{t}^{(\ell)}(\lettrequivabien))+\mu[\tilde{g}]L_{t}^{(\ell+1)}(\lettrequivabien)
  &=R_{t,0}^{(n)}+R_{t,1}^{(n)}+R_{t,2}^{(n)}+{R}_{t,4}^{(n)},
\end{align}
where $R_{t,i}^{(n)}$, for $i=0,1,2$ are given as in \eqref{eq:R0deff1}-\eqref{eq:R2deff1}.\\

\noindent By proceeding as in the proof of Theorem \ref{thm:main2} we deduce that, in order to prove Theorem \ref{thm:main2p}, we are left to show that if $\beta=\beta(H,\ell)$ is defined as
\begin{align*}
\beta:=
  &\left\{\begin{array}{lll}\kappa&\text{ if }&H\in[\frac{1}{2(\ell+2)},\frac{1}{2\ell+3})\\\kappa\wedge(\frac{1-2H(\ell+2)}{4H})&\text{ if }& H\in(0,\frac{1}{2(\ell+2)}),\end{array}\right.
\end{align*}
then, for all $T>0$, there exists a constant $C_T>0$ only depending on $T$, $H$ and $\ell$, such that the following claims hold true:

\begin{enumerate}
\item[-] For every $\delta>0$, $t_1,t_2>\delta$ and $i=1,2,4$,
\begin{align}\label{eq:remindsfortight1222d}
n^{2a}\|R_{t_2,i}^{(n)}-R_{t_1,i}^{(n)}\|_{L^{2}(\Omega)}^2
  & \leq  C_{T}(\|w^{\kappa+1}g\|_{L^{1}(\R)}^2+\|g\|_{W^{2,1}}^2+\|g^{(\ell)}\|_{L^{\infty}(\R)}^2)\nonumber\\
	&\times \left\{\begin{array}{ccc}
\delta^{-H}\rho_n(t_1,t_2)^{2-H(2\ell+2\beta+3)}  &\text{ if }  & t_1,t_2\in[\delta,T]\\
\rho_n(t_1,t_2)^{2-2H(\ell+\beta+2)}             &\text{ if }  & t_1,t_2\in[0,T].
\end{array}\right.
\end{align}

\item[-] For every $t_1,t_2>0$ and $p\in\N$, 
\begin{align}\label{eq:remindsfortight12tre22d}
n^{2a}\|R_{t_2,0}^{(n)}-R_{t_1,0}^{(n)}\|_{L^{2p}(\Omega)}^{2p}
  & \leq  C_{T}^p\| g\|_{L^{1}(\R)}^{2p}\rho_n(t_1,t_2)^{2p(1-H(2+\ell))}.
\end{align}

\item[-] For every $t>0$ and $i=0,1,2,4$,
\begin{align}\label{eq:remindsfortight12asdd}
n^{2a}\|R_{t,i}^{(n)} \|_{L^{2}(\Omega)}^2
  &\leq C_t (\|w^{1+\kappa}g\|_{L^{1}(\R)}+\|g\|_{W^{2,1}}+\|g^{(\ell)}\|_{L^{2}(\R)}^2)n^{-a\kappa},
\end{align}
for some constant $C_{t}>0$  depending on $t$, but not on $n$ and $g$.

\end{enumerate}
As in the proof of Theorem \ref{thm:main2}, to verify this simplification it suffices to prove \eqref{eq:thmmain223p} and show that there exist constants $C,p>0$ such that for all $t_1,t_{2}\in[T_1,T_2]$ and $i=0,1,2,4$,
\begin{align}\label{eq:tightnessgoal1dsa}
\E\left[\Abs{n^{a}(R_{t_2,i}^{(n)}-R_{t_1,i}^{(n)})}^{p}\right]\leq C\rho_n(t_1,t_2)^{\alpha(p)},
\end{align} 
where $\alpha(p)>1$ is some constant only depending on $p$ and $H$.\\

\noindent Relation \eqref{eq:thmmain223p} follows from \eqref{eq:step2decompwithRcase2qd} and \eqref{eq:remindsfortight12asdd}. Moreover, if $H(2\ell+3)<1$ and $T_{1}>0$, then by an application of \eqref{eq:remindsfortight1222d} with $\delta=T_{1}$ and $T=T_{2}$, we obtain
\begin{align}\label{ineq:Billingsleyc1d}
n^{2a}\|R_{t_2,i}^{(n)}-R_{t_1,i}^{(n)}\|_{L^{2}(\Omega)}^2
  &\leq C_{T_1}\rho_n(t_1,t_2)^{2-H(2\ell+2\beta+3)},
\end{align}
for $i=1,2,4$ and $t_1,t_2\in[T_1,T_2]$, where $C_{T_1}$ is a constant independent of $t_1,t_2$ and $n$. Then, using the fact that 
$\beta\leq\kappa<\frac{1-H(2\ell+3)}{2H}$, we deduce that  $2-H(2\ell+2\beta+3)>1$, which by \eqref{ineq:Billingsleyc1d}  implies that the Billingsley condition  holds for $R_{t,1}^{(n)},R_{t,2}^{(n)}, R_{t,4}^{(n)}$.  Similarly, if $2H(\ell+2)<1$, then by \eqref{eq:remindsfortight1222d},
\begin{align}\label{ineq:Billingsleyc2d}
\|R_{t_2,i}^{(n)}-R_{t_1,i}^{(n)}\|_{L^{2}(\Omega)}^2
  &\leq C\rho_n(t_1,t_2)^{2-2H(\ell+\beta+2)},
\end{align}
for $i=1,2,4$, where $C$ is a constant independent of $t_1,t_2$ and $n$. Then, using the fact that $\beta\leq\frac{1-2H(\ell+2)}{4H}< \frac{1-2H(\ell+2)}{2H}$ we deduce that $2-2H(\ell+\beta+2)>1$, which by \eqref{ineq:Billingsleyc2d} implies that the Billingsley criterion holds for $R_{t,1}^{(n)},R_{t,2}^{(n)}, R_{t,4}^{(n)}$. Finally, by applying \eqref{eq:remindsfortight12tre22d} with $p>\frac{1}{2(1-H(2+\ell))}$, we obtain the Billingsley condition for the process $\{R_{0,t}^{(n)}\ \processsymb\ t\geq0\}$ in the case $H<\frac{1}{2\ell+3}$, regardless of the value of $T_{1}$. This finishes the proof of the simplification.\\

\noindent It thus remain to prove \eqref{eq:remindsfortight1222d}-\eqref{eq:remindsfortight12asdd}. In the sequel, we will assume that $t_{1},t_{2}>0$ belong to a given interval of the form $[0,T]$, with $T>0$ and $C$ will denote a generic constant only depending on $T$, $H$ and $\ell$ that might change from line to line. For proving \eqref{eq:remindsfortight12asdd} in the case $i=0$, we use the inequality \eqref{eq:momentsofincLt} in Lemma \ref{Lem:existLocalderiv}, as well as the fact that $|\mu[g]|\leq \| g\|_{L^{1}(\R)}$, to deduce that there exists a constant $C>0$, such that for all $t>0$,
\begin{align}\label{inbeq:R0R4l2boundprob1d}
\|R_{t,0}^{(n)}\|_{L^{2}(\Omega)}^2
  &\leq\| g\|_{L^{1}(\R)}(\|L_{\frac{1}{n}}^{(\ell)}(\lettrequivabien)\|_{L^{2}(\Omega)}+\|L_{t}^{(\ell)}(\lettrequivabien)-L_{\frac{\lfloor nt\rfloor}{n}}^{(\ell)}(\lettrequivabien)\|_{L^{2}(\Omega)})\nonumber\\
	&\leq C\| g\|_{L^{1}(\R)}^2 n^{2H(\ell+1)-2}.
\end{align}
Recall that $H(2\ell+2\kappa+3)<1$, so that the power of $n$ in the right hand side of \eqref{inbeq:R0R4l2boundprob1d} satisfies the inequality 
$$H(2\ell+2)-2=1-2H-2\kappa+H(2\ell+2\kappa+3)<-2\kappa-2H\leq-2\kappa-2a.$$
Relation \eqref{eq:remindsfortight12asdd} then follows from \eqref{inbeq:R0R4l2boundprob1d}. To prove \eqref{eq:remindsfortight12tre22d} in the case $i=0$, we split our proof into the cases $\rho_{n}(t_1,t_2)\leq\frac{1}{n}$ and $\rho_{n}(t_1,t_2)>\frac{1}{n}$. If $\rho_{n}(t_1,t_2)\leq\frac{1}{n}$, we  combine \eqref{eq:momentsofincLt} in Lemma \ref{Lem:existLocalderiv} with the inequality $|\mu[g]|\leq \| g\|_{L^{1}(\R)}$,  to write 
\begin{align}\label{inbeq:R0R4l2boundprob1asdasd}
\|R_{t_2,0}^{(n)}-R_{t_1,0}^{(n)}\|_{L^{2p}(\Omega)}
  &\leq\| g\|_{L^{1}(\R)}(\|L_{t_2}^{(\ell)}(\lettrequivabien)-L_{t_1}^{(\ell)}(\lettrequivabien)\|_{L^{2p}(\Omega)}+\|L_{\frac{\lfloor nt_2\rfloor}{n}}^{(\ell)}(\lettrequivabien)-L_{\frac{\lfloor nt_1\rfloor}{n}}^{(\ell)}(\lettrequivabien)\|_{L^{2p}(\Omega)})\nonumber\\
	&\leq C\| g\|_{L^{1}(\R)}  \rho_n(t_1,t_2)^{1-H(1+\ell)}
	\leq C\rho_n(t_1,t_2)^{a}\| g\|_{L^{1}(\R)} \rho_n(t_1,t_2)^{1-H(2+\ell)}\nonumber\\
	&\leq Cn^{-a}\| g\|_{L^{1}(\R)} \rho_n(t_1,t_2)^{1-H(2+\ell)},
\end{align}
as required. On the other hand, if $\rho_{n}(t_1,t_2)>\frac{1}{n}$, by a further application of inequality \eqref{eq:momentsofincLt} in Lemma \ref{Lem:existLocalderiv}, we get
\begin{align}\label{inbeq:R0R4l2boujhjnhddlo}
\|R_{t_2,0}^{(n)}-R_{t_1,0}^{(n)}\|_{L^{2p}(\Omega)}
  &\leq \|L_{t_1}^{(\ell)}(\lettrequivabien)-L_{\frac{\lfloor nt_1\rfloor}{n}}^{(\ell)}(\lettrequivabien)\|_{L^2(\Omega)}+\|L_{t_2}^{(\ell)}(\lettrequivabien)-L_{\frac{\lfloor nt_2\rfloor}{n}}^{(\ell)}(\lettrequivabien)\|_{L^2(\Omega)}\nonumber\\
	&\leq Cn^{-(1-H(\ell+1))}\| g\|_{L^{1}(\R)}
	\leq Cn^{-a}n^{-(1-H(\ell+2))}\| g\|_{L^{1}(\R)}\nonumber\\
	&\leq Cn^{-a}\rho_{n}(t_1,t_2)^{1-H(\ell+2)}\| g\|_{L^{1}(\R)}.
\end{align}
This completes the proof of \eqref{eq:remindsfortight12tre22d} and \eqref{eq:remindsfortight12asdd} in the case $i=0$.\\

\noindent  For proving \eqref{eq:remindsfortight1222d} and \eqref{eq:remindsfortight12asdd} the case $i=1$, we need to rewrite in a suitable way the random variable 
$$e^{\textbf{i}\xi(X_{\frac{i-1}{n}}-\lettrequivabien-\frac{y}{n^H})}(e^{\textbf{i}\xi\Delta_{i,s}X}-e^{-\frac{1}{2}\xi^2\sigma_{s,i}^2}),$$
appearing in the definition of $R_{t,1}^{(n)}$. This can be done as follows: using \eqref{eq:Fasdeltaq}, we can write
\begin{multline*}
e^{\textbf{i}\xi(X_{\frac{i-1}{n}}-\lettrequivabien-\frac{y}{n^a})}(e^{\textbf{i}\xi\Delta_{i,s}X}-e^{-\frac{1}{2}\xi^2\sigma_{s,i}^2})\\
\begin{aligned}
  &=\delta\bigg(\Indi{[\frac{i-1}{n},s]}\int_{0}^{1}(\textbf{i}\xi)e^{\textbf{i}\xi (X_{\frac{i-1}{n}}-\lettrequivabien-\frac{y}{n^a}+z\Delta_{i,s}X)-\frac{1}{2}(1-z^2)\sigma_{i,s}^2\xi^2}dz\bigg)\\
	&+\langle\Indi{[\frac{i-1}{n},s]},\Indi{[0,\frac{i-1}{n}]}\rangle_{\Hg}\int_{0}^{1}(\textbf{i}\xi)^2e^{\textbf{i}\xi (X_{\frac{i-1}{n}}-\lettrequivabien-\frac{y}{n^a}+z\Delta_{i,s}X)-\frac{1}{2}(1-z^2)\sigma_{i,s}^2\xi^2}dz.
\end{aligned}
\end{multline*}
Consequently, by \eqref{eq:R1deff1}, $R_{t,1}^{(n)}=R_{t,1,2}^{(n)}+\delta (R_{t,1,1}^{(n,Sk)})$, where 
\begin{align}
R_{t,1,2}^{(n)}
  &:=\frac{1}{2\pi}\int_{\R^{2}}\sum_{i=2}^{\lfloor nt\rfloor}\int_{\frac{i-1}{n}}^{\frac{i}{n}}\langle\Indi{[\frac{i-1}{n},s]},\Indi{[0,\frac{i-1}{n}]}\rangle_{\Hg}\nonumber\\
	&\times \int_{0}^{1}(\textbf{i}\xi)^{\ell+2}e^{\textbf{i}\xi (X_{\frac{i-1}{n}}-\lettrequivabien-\frac{y}{n^a}+z\Delta_{i,s}X)-\frac{1}{2}(1-z^2)\sigma_{i,s}^2\xi^2}g(y)dzdsdyd\xi\label{eq:R12defq}\\
R_{t,1,1}^{(n,Sk)}
  &:=\frac{1}{2\pi }\int_{\R^{2}}\sum_{i=2}^{\lfloor nt\rfloor}\int_{\frac{i-1}{n}}^{\frac{i}{n}}\Indi{[\frac{i-1}{n},s]}\int_{0}^{1}(\textbf{i}\xi)^{\ell+1}e^{\textbf{i}\xi (X_{\frac{i-1}{n}}-\lettrequivabien-\frac{y}{n^a}+z\Delta_{i,s}X)-\frac{1}{2}(1-z^2)\sigma_{i,s}^2\xi^2}g(y)dzdsdyd\xi\nonumber.
\end{align}
Thus, in order to bound  $\|R_{t_2,1}^{(n)}-R_{t_1,1}^{(n)}\|_{L^{2}(\Omega)} $, it suffices to estimate
$\|R_{t_2,1,2}^{(n)}-R_{t_1,1,2}^{(n)}\|_{L^{2}(\Omega)}$ and $\|\delta( R_{t_2,1,1}^{(n,Sk)}-R_{t_1,1,1}^{(n,Sk)})\|_{L^{2}(\Omega)} $.\\

\noindent First we handle the term $\|\delta( R_{t_2,1,1}^{(n,Sk)}-R_{t_1,1,1}^{(n,Sk)})\|_{L^{2}(\Omega)}$. By the identity $R_{t,1}^{(n)}=R_{t,1,2}^{(n)}+\delta (R_{t,1,1}^{(n,Sk)})$, we have that 
\begin{align*}
\E[\delta( R_{t_2,1,1}^{(n,Sk)}-R_{t_1,1,1}^{(n,Sk)})^2]
  &=\E[\delta( R_{t_2,1,1}^{(n,Sk)}-R_{t_1,1,1}^{(n,Sk)})((R_{t_2,1}^{(n)}-R_{t_1,1}^{(n)})-(R_{t_2,1,2}^{(n)}-R_{t_1,1,2}^{(n)}))]\\
	&=\E[\langle R_{t_2,1,1}^{(n,Sk)}-R_{t_1,1,1}^{(n,Sk)},D(R_{t_2,1}^{(n)}-R_{t_1,1}^{(n)})-D(R_{t_2,1,2}^{(n)}-R_{t_1,1,2}^{(n)})\rangle_{\Hg}],
\end{align*}
which implies that 
\begin{align}\label{ineq:R11skboundq}
\|\delta( R_{t_2,1,1}^{(n,Sk)}-R_{t_1,1,1}^{(n,Sk)})\|_{L^2(\Omega)}^2
  &\leq |\E[\langle  R_{t_2,1,1}^{(n,Sk)}-R_{t_1,1,1}^{(n,Sk)},D(R_{t_2,1}^{(n)}-R_{t_1,1}^{(n)})\rangle_{\Hg}]|\nonumber\\
	&+|\E[\langle  R_{t_2,1,1}^{(n,Sk)}-R_{t_1,1,1}^{(n,Sk)},D(R_{t_2,1,2}^{(n)}-R_{t_1,1,2}^{(n)})\rangle_{\Hg}]|.
\end{align}
In order to estimate $|\E[\langle  R_{t_2,1,1}^{(n,Sk)}-R_{t_1,1,1}^{(n,Sk)},D(R_{t_2,1}^{(n)}-R_{t_1,1}^{(n)})\rangle_{\Hg}]|$ we proceed as follows. Notice that $D(R_{t_2,1}^{(n)}-R_{t_1,1}^{(n)})=P_1^{(n)}+P_2^{(n)}$ by \eqref{eq:R1deff1}, where 
\begin{align*}
P_1^{(n)}
  &:=\frac{1}{2\pi  }\int_{\R^{2}}\sum_{2\vee nt_1\leq i\leq nt_2}\int_{\frac{i-1}{n}}^{\frac{i}{n}}(\textbf{i}\xi)^{\ell+1} \Indi{[0,\frac{i-1}{n}]} e^{\textbf{i}\xi(X_{\frac{i-1}{n}}-\lettrequivabien-\frac{y}{n^a})}(e^{-\frac{1}{2}\xi^2\sigma_{s,i}^2}-e^{\textbf{i}\xi\Delta_{i,s}X})g(y)dzdsdyd\xi,\\
P_2^{(n)}
  &:=-\frac{1}{2\pi  }\int_{\R^{2}}\sum_{2\vee nt_1\leq i\leq nt_2}\int_{\frac{i-1}{n}}^{\frac{i}{n}}(\textbf{i}\xi)^{\ell+1} \Indi{[\frac{i-1}{n},s]} e^{\textbf{i}\xi(X_{s}-\lettrequivabien-\frac{y}{n^a})} g(y)dzdsdyd\xi.
\end{align*}
Therefore, in order to estimate $|\E[\langle R_{t_2,1,1}^{(n,Sk)}-R_{t_1,1,1}^{(n,Sk)},D(R_{t_2,1}^{(n)}-R_{t_1,1}^{(n)})\rangle_{\Hg}]|$, it suffices to analyze the terms
\begin{align}\label{eq:R12bound1sq}
\E[\langle R_{t_2,1,1}^{(n,Sk)}-R_{t_1,1,1}^{(n,Sk)},P_{1}^{(n)}\rangle_{\Hg}]
  &=\frac{(-1)^{\ell+1}}{4\pi^2}\int_{\R^{4}}\sum_{2\vee nt_1\leq i,j\leq nt_2}\int_{\frac{i-1}{n}}^{\frac{i}{n}}\int_{\frac{j-1}{n}}^{\frac{j}{n}}\int_{0}^1(\xi\tilde{\xi})^{\ell+1}
	\langle\Indi{[\frac{i-1}{n},s]},\Indi{[0,\frac{j-1}{n}]}\rangle_{\Hg}\nonumber
	\\
	&\times g(y)g(\tilde{y}) \E[e^{\textbf{i}\xi (X_{\frac{i-1}{n}}-\lettrequivabien-\frac{y}{n^a}+z\Delta_{i,s}X)-\frac{1}{2}(1-z^2)\sigma_{i,s}^2\xi^2
	+\textbf{i}\tilde{\xi} (X_{\frac{j-1}{n}}-\lettrequivabien-\frac{\tilde{y}}{n^a})}\nonumber \\
	&\times (e^{\textbf{i}\tilde{\xi}\Delta_{j,\tilde{s}}X}-e^{-\frac{1}{2}\tilde{\xi}^2\sigma_{\tilde{s},j}^2})] dzd\vec{s}d\vec{y}d\vec{\xi},
\end{align}
and 
\begin{multline}\label{eq:R12bound2q}
\E[\langle R_{t_2,1,1}^{(n,Sk)}-R_{t_1,1,1}^{(n,Sk)},P_{2}^{(n)}\rangle_{\Hg}]\\
\begin{aligned}
  &=\frac{(-1)^{\ell}}{4\pi^2n^{2H}}\int_{\R^{4}}\sum_{2\vee nt_1\leq i,j\leq nt_2}\int_{\frac{i-1}{n}}^{\frac{i}{n}}\int_{\frac{j-1}{n}}^{\frac{j}{n}}\int_{0}^1(\xi\tilde{\xi})^{ \ell+1}
	\langle\Indi{[i-1,ns]},\Indi{[j-1,n\tilde{s}]}\rangle_{\Hg} \\
	&\times g(y)g(\tilde{y}) \E[e^{\textbf{i}\xi (X_{\frac{i-1}{n}}-\lettrequivabien-\frac{y}{n^a}+z\Delta_{i,s}X)-\frac{1}{2}(1-z^2)\sigma_{i,s}^2\xi^2
	+\textbf{i}\tilde{\xi} (X_{\tilde{s}}-\lettrequivabien-\frac{\tilde{y}}{n^a} )}]d {z}d\vec{s}d\vec{y}d\vec{\xi},
\end{aligned}
\end{multline}
where $\vec{s}=(s,\tilde{s})$, $\vec{y}=(y,\tilde{y})$ and $\vec{\xi}=(\xi,\tilde{\xi})$. First we bound the right-hand side of \eqref{eq:R12bound1sq}. To this end, we first write 
\begin{align}\label{eq:RKvsP1decomp}
\E[\langle R_{t_2,1,1}^{(n,Sk)}-R_{t_1,1,1}^{(n,Sk)},P_{1}^{(n)}\rangle_{\Hg}]
  &=A_{1,1,1}^{(n)}+\tilde{A}_{1,1,1}^{(n)},
\end{align}
where 
\begin{align}\label{eq:A111defq}
A_{1,1,1}^{(n)}
  &:=\frac{(-1)^{\ell+1}}{4\pi^2}\int_{\R^{4}}\sum_{\substack{2\vee (nt_1)\leq i,j\leq nt_2\\|i-j|\geq 2}}\int_{\frac{i-1}{n}}^{\frac{i}{n}}\int_{\frac{j-1}{n}}^{\frac{j}{n}}\int_{0}^1(\xi\tilde{\xi})^{\ell+1}
	\langle\Indi{[\frac{i-1}{n},s]},\Indi{[0,\frac{j-1}{n}]}\rangle_{\Hg}\nonumber
	\\
	&\times g(y)g(\tilde{y}) \E[e^{\textbf{i}\xi (X_{\frac{i-1}{n}}-\lettrequivabien-\frac{y}{n^a}+z\Delta_{i,s}X)-\frac{1}{2}(1-z^2)\sigma_{i,s}^2\xi^2
	+\textbf{i}\tilde{\xi} (X_{\frac{j-1}{n}}-\lettrequivabien-\frac{\tilde{y}}{n^a})}\nonumber \\
	&\times (e^{\textbf{i}\tilde{\xi}\Delta_{j,\tilde{s}}X}-e^{-\frac{1}{2}\tilde{\xi}^2\sigma_{\tilde{s},j}^2})] dzd\vec{s}d\vec{y}d\vec{\xi}
\end{align}
and 
\begin{align}\label{eq:A111tildeeqq}
\tilde{A}_{1,1,1}^{(n)}
  &:=\frac{(-1)^{\ell+1}}{4\pi^2}\int_{\R^{4}}\sum_{\substack{2\vee (nt_1)\leq i,j\leq nt_2\\|i-j|\leq 1}}\int_{\frac{i-1}{n}}^{\frac{i}{n}}\int_{\frac{j-1}{n}}^{\frac{j}{n}}\int_{0}^1(\xi\tilde{\xi})^{\ell+1}
	\langle\Indi{[\frac{i-1}{n},s]},\Indi{[0,\frac{j-1}{n}]}\rangle_{\Hg}\nonumber
	\\
	&\times g(y)g(\tilde{y}) \E[e^{\textbf{i}\xi (X_{\frac{i-1}{n}}-\lettrequivabien-\frac{y}{n^a}+z\Delta_{i,s}X)-\frac{1}{2}(1-z^2)\sigma_{i,s}^2\xi^2
	+\textbf{i}\tilde{\xi} (X_{\frac{j-1}{n}}-\lettrequivabien-\frac{\tilde{y}}{n^a})}\nonumber \\
	&\times (e^{\textbf{i}\tilde{\xi}\Delta_{j,\tilde{s}}X}-e^{-\frac{1}{2}\tilde{\xi}^2\sigma_{\tilde{s},j}^2})] dzd\vec{s}d\vec{y}d\vec{\xi}.
\end{align}
 
To bound the right hand side of \eqref{eq:A111defq}, we notice that 
\begin{multline}\label{eq:P1termpaux1aux1q}
|\E[e^{\textbf{i}\xi (X_{\frac{i-1}{n}}-\lettrequivabien-\frac{y}{n^a}+z\Delta_{i,s}X)-\frac{1}{2}(1-z^2)\sigma_{i,s}^2\xi^2
	+\textbf{i}\tilde{\xi} (X_{\frac{j-1}{n}}-\lettrequivabien-\frac{\tilde{y}}{n^a})} (e^{\textbf{i}\tilde{\xi}\Delta_{j,\tilde{s}}X}-e^{-\frac{1}{2}\xi^2\sigma_{\tilde{s},j}^2})]|\\
	\leq |e^{-\frac{1}{2}\vec{\xi}^*\Sigma(z,1)\vec{\xi}}-e^{-\frac{1}{2}\vec{\xi}^*{\Sigma}(z,0)\vec{\xi}}|,
\end{multline}
where $\Sigma(z,\tilde{z})$ is the covariance matrix given by \eqref{eq:SigmaZdef}. Using \eqref{eq:diffFouriertransfGaussian} and the fact that $H<\frac{1}{2\ell+3}<\frac{1}{2}$, we conclude that 
\begin{align*}
|e^{-\frac{1}{2}\vec{\xi}^*\Sigma(z,1)\vec{\xi}}-e^{-\frac{1}{2}\vec{\xi}^*{\Sigma}(z,0)\vec{\xi}}|
	&\leq  Cn^{-2H\kappa}(|\xi|^{\kappa}+|\tilde{\xi}|^{\kappa})|\tilde{\xi}|^{\kappa}
	(e^{-\frac{1}{2}\vec{\xi}^*\Sigma(z,1)\vec{\xi}}+e^{-\frac{1}{2}\vec{\xi}^*{\Sigma}(z,0)\vec{\xi}}).
\end{align*}
Consequently, by \eqref{eq:A111defq}, \eqref{eq:P1termpaux1aux1q} and \eqref{eq:estimationip2p},
\begin{align}\label{eq:R12bound1auxq}
A_{1,1,1}^{(n)}
  &\leq  Cn^{-2H(1+\kappa)} \int_{\R^{4}}\sum_{\substack{2\vee(nt_1)\leq i,j\leq nt_2\\|i-j|\geq 2}}\int_{\frac{i-1}{n}}^{\frac{i}{n}}\int_{\frac{j-1}{n}}^{\frac{j}{n}}\int_{0}^1|g(y)g(\tilde{y})|\nonumber\\
	&\times (|\xi|^{\kappa}+|\tilde{\xi}|^{\kappa})|\tilde{\xi}|^{\ell+\kappa+1}|\xi|^{\ell+1} (e^{-\frac{1}{2}\vec{\xi}^{*}\Sigma(z,1)\vec{\xi}}+e^{-\frac{1}{2}\vec{\xi}^{*}{\Sigma}(z,0)\vec{\xi}}) dzd\vec{s}d\vec{y}d\vec{\xi},
\end{align}
which by equation \eqref{eq:Lemmaauxgoal} in Lemma \ref{lem:techintegralbound}, implies that 
\begin{align}\label{eq:covR1P1finalq}
|A_{1,1,1}^{(n)}|
& \leq  C\|g\|_{L^1(\R)}^2n^{-2H(1+\kappa)} \frac{1}{n^2}\sum_{\substack{2\vee(nt_1)\leq i,j\leq nt_2\\|i-j|\geq 2}}\big(\frac{i\vee j}{n}\big)^{-H}\big(\frac{i\wedge j\wedge|j-i|}{n}\big)^{-H(2\ell+2\kappa+3)}\\
& \leq  C\|g\|_{L^1(\R)}^2n^{-2H(1+\kappa)} \int_{[\frac{\lfloor nt_1\rfloor}{n},\frac{\lfloor nt_2\rfloor}{n}]^2}(s\wedge{s})^{-H}(s\wedge{s}\wedge|s-\tilde{s}|)^{-H(2\ell+2\kappa+3)}dsd\tilde{s}\nonumber.
\end{align} 
Therefore, using the inequality \eqref{eq:integralbound}, as well as the condition $a\leq H$, we conclude that 
\begin{align}\label{eq:A111finalbound}
n^{2a}|A_{1,1,1}^{(n)}|
& \leq  C\|g\|_{L^1(\R)}^2\times 
\left\{\begin{array}{ccc}
\delta^{-H}\rho_n(t_1,t_2)^{2-H(2\ell+2\kappa+3)}  &\text{ if }  & t_1,t_2\in[\delta,T]\\
\rho_n(t_1,t_2)^{2-2H(\ell+\kappa+2)}             &\text{ if }  & t_1,t_2\in[0,T].
\end{array}\right.
\end{align} 
Similarily, from \eqref{eq:A111tildeeqq} and \eqref{eq:P1termpaux1aux1q}, it follows that 
\begin{align*}
|\tilde{A}_{1,1,1}^{(n)}|
  &\leq  n^{-2H} \int_{\R^{4}}\sum_{\substack{2\vee(nt_1)\leq i,j\leq nt_2\\|i-j|\geq 2}}\int_{\frac{i-1}{n}}^{\frac{i}{n}}\int_{\frac{j-1}{n}}^{\frac{j}{n}}\int_{0}^1|g(y)g(\tilde{y})|\nonumber\\
	&\times |\xi\tilde{\xi}|^{\ell+1} (e^{-\frac{1}{2}\vec{\xi}^{*}\Sigma(z,1)\vec{\xi}}+e^{-\frac{1}{2}\vec{\xi}^{*}{\Sigma}(z,0)\vec{\xi}}) dzd\vec{s}d\vec{y}d\vec{\xi},
\end{align*}
which by equation \eqref{eq:Lemmaauxgoal2} in Lemma \ref{lem:techintegralbound}, implies that

\begin{align}\label{eq:R12bound1auxqdsa}
|\tilde{A}_{1,1,1}^{(n)}|
  &\leq    C\|g\|_{L^1(\R)}^2n^{-2H -2}  \sum_{\substack{2\vee(nt_1)\leq i,j\leq nt_2\\|i-j|\leq 1}}\big(\frac{i\vee j}{n}\big)^{-H}n^{H(2\ell+3)}.
\end{align}
Therefore, by applying \eqref{eq:bounimaxjint} with $\kappa$ replaced by $\kappa+1$, we get

\begin{align}\label{eq:R12bound1auxqdsap2}
|\tilde{A}_{1,1,1}^{(n)}|
  &\leq C\|g\|_{L^1(\R)}^2n^{-2H(1+\kappa)} \frac{1}{n^2}\sum_{2\vee(nt_1)\leq i,j\leq nt_2}\big(\frac{i\vee j}{n}\big)^{-H}\big(\frac{i\wedge j\wedge(1+|j-i|)}{n}\big)^{-H(2\ell+2\kappa+3)}\\
& \leq  C\|g\|_{L^1(\R)}^2n^{-2H(1+\kappa)} \int_{[\frac{\lfloor nt_1\rfloor}{n},\frac{\lfloor nt_2\rfloor}{n}]^2}(s\wedge{s})^{-H}(s\wedge{s}\wedge|s-\tilde{s}|)^{-H(2\ell+2\kappa+3)}dsd\tilde{s}.\nonumber
\end{align} 
An application of the inequality \eqref{eq:integralbound} and the condition $a\leq H$ then leads to
\begin{align}\label{eq:Atilde111finalbound}
n^{2a}|\tilde{A}_{1,1,1}^{(n)}|
& \leq  Cn^{-2H\kappa}\|g\|_{L^1(\R)}^2\times 
\left\{\begin{array}{ccc}
\delta^{-H}\rho_n(t_1,t_2)^{2-H(2\ell+2\kappa+3)}  &\text{ if }  & t_1,t_2\in[\delta,T]\\
\rho_n(t_1,t_2)^{2-2H(\ell+\kappa+2)}             &\text{ if }  & t_1,t_2\in[0,T].
\end{array}\right.
\end{align} 
Finally, by \eqref{eq:RKvsP1decomp}, \eqref{eq:A111finalbound} and \eqref{eq:Atilde111finalbound} we obtain the bound 
\begin{multline}\label{eq:R11P1tightineqfinal}
|\E[\langle R_{t_2,1,1}^{(n,Sk)}-R_{t_1,1,1}^{(n,Sk)},P_{1}^{(n)}\rangle_{\Hg}]|\\
  \leq Cn^{-2H\kappa}\|g\|_{L^1(\R)}^2\times \left\{\begin{array}{ccc}
\delta^{-H}\rho_n(t_1,t_2)^{2-H(2\ell+2\kappa+3)}  &\text{ if }  & t_1,t_2\in[\delta,T]\\
\rho_n(t_1,t_2)^{2-2H(\ell+\kappa+2)}             &\text{ if }  & t_1,t_2\in[0,T].
\end{array}\right.
\end{multline}

\noindent Next we bound the term $\E[\langle R_{t_2,1,1}^{(n,Sk)}-R_{t_1,1,1}^{(n,Sk)},P_{2}^{(n)}\rangle_{\Hg}]$. By \eqref{eq:estimationip}, for all $s\in[\frac{i-1}{n},\frac{i}{n}]$ and $\tilde{s}\in[\frac{j-1}{n},\frac{j}{n}]$, it holds that 
\begin{align*}
|\langle\Indi{[i-1,ns]},\Indi{[j-1,n\tilde{s}]}\rangle_{\Hg}|
  &\leq |j-i-1|^{2H-2}\Indi{\{|i-j|\geq 2\}}+\Indi{\{|i-j|\leq 1\}}\leq C(1+|i-j|)^{2H-2}.
\end{align*}
Thus, by \eqref{eq:R12bound2q},
\begin{multline}\label{eq:R12bound2aux1qwh}
|\E[\langle R_{t_2,1,1}^{(n,Sk)}-R_{t_1,1,1}^{(n,Sk)},P_{2}^{(n)}\rangle_{\Hg}]|\\
\begin{aligned}
  &\leq\frac{1}{4\pi^2n^{2H}}\int_{\R^{4}}\sum_{2\vee (nt_1)\leq i,j\leq nt_2}\int_{\frac{i-1}{n}}^{\frac{i}{n}}\int_{\frac{j-1}{n}}^{\frac{j}{n}}\int_{0}^1|\xi\tilde{\xi}|^{ \ell+1} \\
	&\times |\langle\Indi{[i-1,ns]},\Indi{[j-1,n\tilde{s}]}\rangle_{\Hg}||g(y)g(\tilde{y})|e^{-\frac{1}{2}\vec{\xi}^*{\Sigma}(z,1)\vec{\xi}}dzd\vec{s}d\vec{y}d\vec{\xi}\\
  &\leq Cn^{-2H}\|g\|_{L^1(\R)}^2 \int_{\R^{2}}\sum_{2\vee(nt_1)\leq i,j\leq nt_2}\int_{\frac{i-1}{n}}^{\frac{i}{n}}\int_{\frac{j-1}{n}}^{\frac{j}{n}}\int_{0}^1(1+|i-j|)^{2H-2}|\xi\tilde{\xi}|^{ \ell+1}e^{-\frac{1}{2}\vec{\xi}^*{\Sigma}(z,1)\vec{\xi}}dzd\vec{s}d\vec{\xi}.
\end{aligned}
\end{multline}
By following the same arguments as in the proof of \eqref{eq:covR1P1finalq} and \eqref{eq:R12bound1auxqdsap2}, we can apply inequality \eqref{eq:Lemmaauxgoal} in Lemma \ref{lem:techintegralbound} to the indices in the right hand side of \eqref{eq:R12bound2aux1qwh} satisfying $|i-j|\geq 2$, and inequality \eqref{eq:Lemmaauxgoal2} to the indices satisfying satisfying $|i-j|\leq 1$, in order to obtain
\begin{multline*} 
|\E[\langle R_{t_2,1,1}^{(n,Sk)}-R_{t_1,1,1}^{(n,Sk)},P_{2}^{(n)}\rangle_{\Hg}]|\\
\begin{aligned}
  &\leq C\|g\|_{L^1(\R)}^2 n^{2H(\ell+1)-2} \sum_{2\vee(nt_1)\leq i,j\leq nt_2}(1+|j-i|)^{2H-2}(i\vee j)^{-H}(i\wedge j\wedge(1+|j-i|))^{-H(2\ell+3)}.
\end{aligned}
\end{multline*}
Using the previous inequality and the conditions $H<\frac{1}{2\ell+3}<\frac{1}{2}$, $H(2\ell+2\kappa+3)<1$, we obtain 
\begin{align}\label{eq:R111vsP2finalboundfdd} 
|\E[\langle R_{t_2,1,1}^{(n,Sk)}-R_{t_1,1,1}^{(n,Sk)},P_{2}^{(n)}\rangle_{\Hg}]|
  &\leq C\|g\|_{L^1(\R)}^2 n^{2H(\ell+1)-2} \sum_{2\vee(nt_1)\leq i,j\leq nt_2}(1+|j-i|)^{2H-2}(i\vee j)^{-H}\nonumber\\
	&\leq (1-H)^{-1}Ct_{2}^{1-H} \|g\|_{L^1(\R)}^2 n^{H(2\ell+1)-1} \sum_{r\in\Z}(1+|r|)^{2H-2}\nonumber\\
	&\leq C^{\prime}t_{2}^{1-H}\|g\|_{L^1(\R)}^2 n^{-2H\kappa}
\end{align}
and 
\begin{multline}\label{eq:R111vsP2finalboundprev} 
|\E[\langle R_{t_2,1,1}^{(n,Sk)}-R_{t_1,1,1}^{(n,Sk)},P_{2}^{(n)}\rangle_{\Hg}]|\\
\begin{aligned}
  &\leq C\|g\|_{L^1(\R)}^2 n^{2H(\ell+1)-2} \sum_{2\vee(nt_1)\leq i,j\leq nt_2}(1+|j-i|)^{2H-2}(i\vee j)^{-H}(i\wedge j\wedge(1+|j-i|))^{-H(2\ell+3)}\\
	&\leq C\|g\|_{L^1(\R)}^2 n^{-2H-2} \sum_{2\vee(nt_1)\leq i,j\leq nt_2}\big(\frac{i\vee j}{n})^{-H}\big(\frac{i\wedge j\wedge(1+|j-i|)}{n}\big)^{-H(2\ell+3)}\\
	& \leq  C\|g\|_{L^1(\R)}^2n^{-2H } \int_{[\frac{\lfloor nt_1\rfloor}{n},\frac{\lfloor nt_2\rfloor}{n}]^2}(s\wedge{s})^{-H}(s\wedge{s}\wedge|s-\tilde{s}|)^{-H(2\ell+2\kappa+3)}ds d\tilde{s}.
\end{aligned}
\end{multline}
Notice that due to  \eqref{eq:integralbound}, \eqref{eq:R111vsP2finalboundprev} and  condition $a\leq H$,  the previous inequality implies that
\begin{multline}\label{eq:R111vsP2finalbound}
n^{2a}|\E[\langle R_{t_2,1,1}^{(n,Sk)}-R_{t_1,1,1}^{(n,Sk)},P_{2}^{(n)}\rangle_{\Hg}]|\\
\begin{aligned}
& \leq  C\|g\|_{L^1(\R)}^2\times 
\left\{\begin{array}{ccc}
\delta^{-H}\rho_n(t_1,t_2)^{2-H(2\ell+2\kappa+3)}  &\text{ if }  & t_1,t_2\in[\delta,T]\\
\rho_n(t_1,t_2)^{2-2H(\ell+\kappa+2)}             &\text{ if }  & t_1,t_2\in[0,T].
\end{array}\right.
\end{aligned}
\end{multline} 
From relations \eqref{eq:R11P1tightineqfinal}, \eqref{eq:R111vsP2finalboundfdd} and the fact that $DR_{1}^{(n)}=P_1^{(n)}+P_2^{(n)}$, it follows that
\begin{align}\label{eq:R11DR1finalboundq}
n^{2a}|\E[\langle  R_{t_2,1,1}^{(n,Sk)}-R_{t_1,1,1}^{(n,Sk)},D(R_{t_2,1}^{(n)}-R_{t_1,1}^{(n)})\rangle_{\Hg}]|
	&\leq Ct_2^{1-H}n^{-2H\kappa}, 
\end{align}
for some constant $C>0$ independent of $t_1,t_2$ and $n$. In addition, by \eqref{eq:R11P1tightineqfinal} and \eqref{eq:R111vsP2finalbound} we get 
\begin{multline}\label{eq:R11vsDRfinalbound}
n^{2a}|\E[\langle  R_{t_2,1,1}^{(n,Sk)}-R_{t_1,1,1}^{(n,Sk)},D(R_{t_2,1}^{(n)}-R_{t_1,1}^{(n)})\rangle_{\Hg}]|\\
\begin{aligned}
& \leq  C\|g\|_{L^1(\R)}^2\times 
\left\{\begin{array}{ccc}
\delta^{-H}\rho_n(t_1,t_2)^{2-H(2\ell+2\kappa+3)}  &\text{ if }  & t_1,t_2\in[\delta,T]\\
\rho_n(t_1,t_2)^{2-2H(\ell+\kappa+2)}             &\text{ if }  & t_1,t_2\in[0,T].
\end{array}\right.
\end{aligned}
\end{multline} 

\noindent Next we proceed with the problem of bounding $|\E[\langle  R_{t_2,1,1}^{(n,Sk)}-R_{t_1,1,1}^{(n,Sk)},D(R_{t_2,1,2}^{(n)}-R_{t_1,1,2}^{(n)})\rangle_{\Hg}]|$. To this end, we use \eqref{eq:R12defq} to write
\begin{align*}
DR_{t,1,2}^{(n)}
  &=\frac{1}{2\pi}\int_{\R^{2}}\sum_{i=2}^{\lfloor nt\rfloor}\int_{\frac{i-1}{n}}^{\frac{i}{n}}\int_{0}^{1}(\Indi{[0,\frac{i-1}{n}]}+z\Indi{[\frac{i-1}{n},s]}) \langle\Indi{[\frac{i-1}{n},s]},\Indi{[0,\frac{i-1}{n}]}\rangle_{\Hg}\\
	&\times (\textbf{i}\xi)^{\ell+3}e^{\textbf{i}\xi (X_{\frac{i-1}{n}}-\frac{y}{n^a}+z\Delta_{i,s}X)-\frac{1}{2}(1-z^2)\sigma_{i,s}^2\xi^2}g(y)dzdsdyd\xi,
\end{align*}
which leads to 
\begin{align}\label{eq:R11SkDR12decomp}
\E[\langle  R_{t_2,1,1}^{(n,Sk)}-R_{t_1,1,1}^{(n,Sk)},D(R_{t_2,1,2}^{(n)}-R_{t_1,1,2}^{(n)})\rangle_{\Hg}]
  &=A_{1,1,2}+\tilde{A}_{1,1,2},
\end{align}
where 
\begin{align*}
A_{1,1,2}
  &=\frac{(-1)^{\ell+2}}{4\pi^2 }\int_{\R^{4}}\sum_{\substack{2\vee(nt_1)\leq i,j\leq nt_2\\|i-j|\geq 2}}\int_{\frac{i-1}{n}}^{\frac{i}{n}}\int_{\frac{j-1}{n}}^{\frac{j}{n}}\int_{[0,1]^2}\xi^{ \ell+1}\tilde{\xi}^{\ell+3}\\
	&\times\langle  \Indi{[\frac{i-1}{n},s]},\Indi{[0,\frac{j-1}{n}]}\rangle_{\Hg} \langle \Indi{[\frac{i-1}{n},s]},\Indi{[0,\frac{j-1}{n}]}+\tilde{z}\Indi{[\frac{j-1}{n},\tilde{s}]}\rangle_{\Hg}g(y)g(\tilde{y}) \\
	&\times \E[e^{\textbf{i}\xi (X_{\frac{i-1}{n}}-\lettrequivabien-\frac{y}{n^a}+z\Delta_{i,s}X)-\frac{1}{2}(1-z^2)\sigma_{i,s}^2\xi^2+
	\textbf{i}\tilde{\xi}(X_{\frac{j-1}{n}}-\lettrequivabien-\frac{\tilde{y}}{n^a}+\tilde{z}\Delta_{j,\tilde{s}}X-\frac{1}{2}(1-\tilde{z}^2)\sigma_{j,\tilde{s}}^2\tilde{\xi}^2}]d\vec{z}d\vec{s}d\vec{y}d\vec{\xi}
\end{align*}
and 
\begin{align}\label{eq:Atilde112def}
\tilde{A}_{1,1,2}
  &=\frac{(-1)^{\ell+2}}{4\pi^2 }\int_{\R^{4}}\sum_{\substack{2\vee(nt_1)\leq i,j\leq nt_2\\|i-j|\leq 1}}\int_{\frac{i-1}{n}}^{\frac{i}{n}}\int_{\frac{j-1}{n}}^{\frac{j}{n}}\int_{[0,1]^2}\xi^{ \ell+1}\tilde{\xi}^{\ell+3}\nonumber\\
	&\times\langle  \Indi{[\frac{i-1}{n},s]},\Indi{[0,\frac{j-1}{n}]}\rangle_{\Hg} \langle \Indi{[\frac{i-1}{n},s]},\Indi{[0,\frac{j-1}{n}]}+\tilde{z}\Indi{[\frac{j-1}{n},\tilde{s}]}\rangle_{\Hg}g(y)g(\tilde{y})\nonumber\\
	&\times \E[e^{\textbf{i}\xi (X_{\frac{i-1}{n}}-\lettrequivabien-\frac{y}{n^a}+z\Delta_{i,s}X)-\frac{1}{2}(1-z^2)\sigma_{i,s}^2\xi^2+
	\textbf{i}\tilde{\xi}(X_{\frac{j-1}{n}}-\lettrequivabien-\frac{\tilde{y}}{n^a}+\tilde{z}\Delta_{j,\tilde{s}}X-\frac{1}{2}(1-\tilde{z}^2)\sigma_{j,\tilde{s}}^2\tilde{\xi}^2}]d\vec{z}d\vec{s}d\vec{y}d\vec{\xi}.
\end{align}
Notice that
\begin{align}\label{eq:intofexpztildezcase}
\E[e^{\textbf{i}\xi (X_{\frac{i-1}{n}}+z\Delta_{i,s}X)-\frac{1}{2}(1-z^2)\sigma_{i,s}^2\xi^2+
	\textbf{i}\tilde{\xi}(X_{\frac{j-1}{n}}+\tilde{z}\Delta_{j,\tilde{s}}X)-\frac{1}{2}(1-\tilde{z}^2)\sigma_{j,\tilde{s}}^2\tilde{\xi}^2}]
	  &=e^{-\frac{1}{2}\vec{\xi}^{*}\Sigma(z,\tilde{z})\vec{\xi}}.
\end{align}
Thus, by \eqref{eq:estimationip2p} we deduce that 
\begin{multline*}
|A_{1,1,2}|
  \leq Cn^{-4H}\int_{\R^{4}}\sum_{\substack{2\vee(nt_1)\leq i,j\leq nt_2\\|i-j|\geq2}}\int_{\frac{i-1}{n}}^{\frac{i}{n}}\int_{\frac{j-1}{n}}^{\frac{j}{n}}\int_{[0,1]^2}|\xi|^{ \ell+1}|\tilde{\xi}|^{\ell+3} |g(y)g(\tilde{y})|e^{-\frac{1}{2}\vec{\xi}^{*}\Sigma(z,\tilde{z})\vec{\xi}}d\vec{z}d\vec{s}d\vec{y}d\vec{\xi},
\end{multline*}
which by equation \eqref{eq:Lemmaauxgoal} in Lemma \ref{lem:techintegralbound}, leads to 

\begin{align}\label{eq:A112boundmid}
|A_{1,1,2}|
  &\leq Cn^{-4H}\|g\|_{L^1(\R)}^2 \frac{1}{n^2}\sum_{\substack{2\vee(nt_1)\leq i,j\leq nt_2\\|i-j|\geq2}} \big(\frac{i\vee j}{n})^{-H}\big(\frac{i\wedge j\wedge|j-i|}{n}\big)^{-H(2\ell+5)}\nonumber\\
  &\leq Cn^{-2(H+\kappa)} \|g\|_{L^1(\R)}^2 \frac{1}{n^2}\sum_{\substack{2\vee(nt_1)\leq i,j\leq nt_2\\|i-j|\geq2}} \big(\frac{i\vee j}{n})^{-H}\big(\frac{i\wedge j\wedge|j-i|}{n}\big)^{-H(2\ell+3+2\kappa)}.
\end{align}
On the other hand, by first applying the integration by parts 
\begin{align*}
\int_{\R}g(\tilde{y})e^{\textbf{i}\frac{\tilde{\xi} }{n^{a}}\tilde{y}}d\tilde{y}
  &=\frac{n^{2a}}{\tilde{\xi}^2}\int_{\R}g^{\prime\prime}(\tilde{y})e^{\textbf{i}\frac{\tilde{\xi} }{n^{a}}\tilde{y}}d\tilde{y}
\end{align*}
in \eqref{eq:Atilde112def}, and then using \eqref{eq:intofexpztildezcase} and \eqref{eq:estimationip2p}, we get 
\begin{align*} 
|\tilde{A}_{1,1,2}|
  &\leq Cn^{2a-4H}\int_{\R^{4}}\sum_{\substack{2\vee(nt_1)\leq i,j\leq nt_2\\|i-j|\leq 1}}\int_{\frac{i-1}{n}}^{\frac{i}{n}}\int_{\frac{j-1}{n}}^{\frac{j}{n}}\int_{[0,1]^2}|\xi\tilde{\xi}|^{\ell+1}|g(y)g^{\prime\prime}(\tilde{y})|e^{-\frac{1}{2}\vec{\xi}^{*}\Sigma(z,\tilde{z})\vec{\xi}}d\vec{z}d\vec{s}d\vec{y}d\vec{\xi},
\end{align*}
which by equation \eqref{eq:Lemmaauxgoal2} in Lemma \ref{lem:techintegralbound}  the condition $a\leq H$, yields
\begin{align*} 
|\tilde{A}_{1,1,2}|
  &\leq Cn^{-2H+H(2\ell+3)-2}\|g\|_{W^{2,1}}^2\sum_{\substack{2\vee(nt_1)\leq i,j\leq nt_2\\|i-j|\leq 1}}\big(\frac{i\vee j}{n}\big)^{-H}.
\end{align*}
Therefore, by \eqref{eq:bounimaxjint} with $\kappa$ replaced by $\kappa+1$, we get
\begin{align*} 
|\tilde{A}_{1,1,2}|
  &\leq Cn^{-2H(1+\kappa)-2}\|g\|_{W^{2,1}}^2\sum_{\substack{2\vee(nt_1)\leq i,j\leq nt_2\\|i-j|\leq 1}} \big(\frac{i\vee j}{n})^{-H}\big(\frac{i\wedge j\wedge(1+|j-i|)}{n}\big)^{-H(2\ell+3+2\kappa)} .
\end{align*}
Combining this inequality with \eqref{eq:R11SkDR12decomp} and \eqref{eq:A112boundmid}, we obtain
\begin{multline}\label{eq:R11DR12finalboundq}
|\E[\langle  R_{t_2,1,1}^{(n,Sk)}-R_{t_1,1,1}^{(n,Sk)},D(R_{t_2,1,2}^{(n)}-R_{t_1,1,2}^{(n)})\rangle_{\Hg}]|\\
\begin{aligned}
  &\leq Cn^{-2H(1+\kappa)-2}\|g\|_{W^{2,1}}^2\sum_{ 2\vee(nt_1)\leq i,j\leq nt_2 } \big(\frac{i\vee j}{n})^{-H}\big(\frac{i\wedge j\wedge(1+|j-i|)}{n}\big)^{-H(2\ell+3+2\kappa)}\\
	&\leq Cn^{-2H(1+\kappa)}\|g\|_{W^{2,1}}^2 \int_{[\frac{\lfloor nt_1\rfloor}{n},\frac{\lfloor nt_2\rfloor}{n}]^2}(s\vee\tilde{s})^{-H}(s\wedge\tilde{s}\wedge|s-\tilde{s}|)^{-H(2\ell+3+2\kappa)}d\vec{s},
\end{aligned}
\end{multline}
which by \eqref{eq:integralbound} gives
\begin{multline}\label{ineq:R11,DR12finalbound}
|\E[\langle  R_{t_2,1,1}^{(n,Sk)}-R_{t_1,1,1}^{(n,Sk)},D(R_{t_2,1,2}^{(n)}-R_{t_1,1,2}^{(n)})\rangle_{\Hg}]|\\
\begin{aligned}
\leq Cn^{-2H(1+\kappa)}\|g\|_{W^{2,1}}^2 \times 
\left\{\begin{array}{ccc}
\delta^{-H}\rho_n(t_1,t_2)^{2-H(2\ell+2\kappa+1)}  &\text{ if }  & t_1,t_2\in[\delta,T]\\
\rho_n(t_1,t_2)^{2-H(2\ell+2\kappa+2)}             &\text{ if }  & t_1,t_2\in[0,T].
\end{array}\right.
\end{aligned}
\end{multline}
From \eqref{ineq:R11skboundq}, \eqref{eq:R11DR1finalboundq}  and \eqref{ineq:R11,DR12finalbound}, it follows that for all $t>0$, 
\begin{align}\label{eq:deltaR11finalboundq}
n^{2a}\| \delta(R_{t,1,1}^{(n,Sk)})\|_{L^2(\Omega)}^2
  &\leq Cn^{-2H\kappa}\|g\|_{2,1}^2 t^{1-H}.
\end{align}
for some constant $C>0$ independent of $t$ and $n$. In addition, by \eqref{ineq:R11skboundq},  \eqref{eq:R11vsDRfinalbound} and \eqref{ineq:R11,DR12finalbound},  
\begin{multline}\label{eq:SkortermfinalboundBill}
\|n^{a}\delta( R_{t_2,1,1}^{(n,Sk)}-R_{t_1,1,1}^{(n,Sk)})\|_{L^{2}(\Omega)}^2\\
\begin{aligned}
& \leq  C\|g\|_{W^{2,1}}^2\times 
\left\{\begin{array}{ccc}
\delta^{-H}\rho_n(t_1,t_2)^{2-H(2\ell+2\kappa+3)}  &\text{ if }  & t_1,t_2\in[\delta,T]\\
\rho_n(t_1,t_2)^{2-2H(\ell+\kappa+2)}             &\text{ if }  & t_1,t_2\in[0,T]
\end{array}\right.
\end{aligned}
\end{multline} 
To bound $\|R_{t_2,1,2}^{(n)}-R_{t_1,1,2}^{(n)}\|_{L^{2}(\Omega)}^2$, we notice by \eqref{eq:R12defq} that
\begin{align}\label{eq:decompRt12}
\|R_{t_2,1,2}^{(n)}-R_{t_1,1,2}^{(n)}\|_{L^{2}(\Omega)}^2
  &=A_{1,1,3}+\tilde{A}_{1,1,3}
\end{align}
where 
\begin{align*}
A_{1,1,3}
  &= \frac{1}{4\pi^2}\int_{\R^{4}}\sum_{\substack{2\vee(nt_1)\leq i,j\leq nt_2\\|i-j|\geq 2}}\int_{\frac{i-1}{n}}^{\frac{i}{n}}\int_{\frac{j-1}{n}}^{\frac{j}{n}}\int_{[0,1]^2}(\textbf{i}\xi)^{\ell+2}(\textbf{i}\tilde{\xi})^{\ell+2}\\
	&\times \langle\Indi{[\frac{i-1}{n},s]},\Indi{[0,\frac{i-1}{n}]}\rangle_{\Hg}\langle\Indi{[\frac{j-1}{n},\tilde{s}]},\Indi{[0,\frac{j-1}{n}]}\rangle_{\Hg}g(y)g(\tilde{y})\\
	&\times \E[e^{\textbf{i}\xi (X_{\frac{i-1}{n}}-\lettrequivabien-\frac{y}{n^{H}} +z\Delta_{i,s}X)-\frac{1}{2}(1-z^2)\sigma_{i,s}^2\xi^2
	+\textbf{i}\tilde{\xi} (X_{\frac{j-1}{n}}-\lettrequivabien-\frac{\tilde{y}}{n} +\tilde{z}\Delta_{j,\tilde{s}}X)-\frac{1}{2}(1-\tilde{z}^2)\sigma_{j,\tilde{s}}^2\xi^2
	}]d\vec{z}d\vec{s}d\vec{y}d\vec{\xi}
\end{align*}
and 
\begin{align}\label{eq:Atilde113def}
\tilde{A}_{1,1,3}
  &= \frac{1}{4\pi^2}\int_{\R^{4}}\sum_{\substack{2\vee(nt_1)\leq i,j\leq nt_2\\|i-j|\leq 1}}\int_{\frac{i-1}{n}}^{\frac{i}{n}}\int_{\frac{j-1}{n}}^{\frac{j}{n}}\int_{[0,1]^2}(\textbf{i}\xi)^{\ell+2}(\textbf{i}\tilde{\xi})^{\ell+2}\nonumber\\
	&\times \langle\Indi{[\frac{i-1}{n},s]},\Indi{[0,\frac{i-1}{n}]}\rangle_{\Hg}\langle\Indi{[\frac{j-1}{n},\tilde{s}]},\Indi{[0,\frac{j-1}{n}]}\rangle_{\Hg}g(y)g(\tilde{y})\nonumber\\
	&\times \E[e^{\textbf{i}\xi (X_{\frac{i-1}{n}}-\lettrequivabien-\frac{y}{n^{H}} +z\Delta_{i,s}X)-\frac{1}{2}(1-z^2)\sigma_{i,s}^2\xi^2
	+\textbf{i}\tilde{\xi} (X_{\frac{j-1}{n}}-\lettrequivabien-\frac{\tilde{y}}{n} +\tilde{z}\Delta_{j,\tilde{s}}X)-\frac{1}{2}(1-\tilde{z}^2)\sigma_{j,\tilde{s}}^2\xi^2
	}]d\vec{z}d\vec{s}d\vec{y}d\vec{\xi}.
\end{align}
By \eqref{eq:estimationip2p}, 
\begin{align*}
|A_{1,1,3}|
  &\leq Cn^{-4H}\int_{\R^{4}}\sum_{\substack{2\vee(nt_1)\leq i,j\leq nt_2\\|i-j|\geq 2}}\int_{\frac{i-1}{n}}^{\frac{i}{n}}\int_{\frac{j-1}{n}}^{\frac{j}{n}}\int_{[0,1]^2}|\xi\tilde{\xi}|^{\ell+2}|g(y)g(\tilde{y})|e^{-\frac{1}{2}\vec{\xi}^*\Sigma(z,\tilde{z})\vec{\xi}}d\vec{z}d\vec{s}d\vec{y}d\vec{\xi}.
\end{align*}
Thus, by equation \eqref{eq:Lemmaauxgoal} in Lemma \ref{lem:techintegralbound}, 
\begin{align*}
|A_{1,1,3}|
  &\leq Cn^{-4H }\|g\|_{L^1(\R)}^2 \frac{1}{n^2}\sum_{\substack{2\vee(nt_1)\leq i,j\leq nt_2\\|i-j|\geq 2}}\big(\frac{i\vee j}{n})^{-H}\big(\frac{i\wedge j\wedge|j-i|}{n})^{-H(2\ell+5)}\\
  &\leq C\|g\|_{L^1(\R)}^2 n^{-H(2+2\kappa)} \frac{1}{n^2}\sum_{\substack{2\vee(nt_1)\leq i,j\leq nt_2\\|i-j|\geq 2}}\big(\frac{i\vee j}{n}\big)^{-H}\big(\frac{i\wedge j\wedge |j-i|}{n}\big)^{-H(2\ell+3+2\kappa)}\nonumber\\
	&\leq Cn^{-H(2+2\kappa)}\|g\|_{L^1(\R)}^2 \int_{[\frac{\lfloor nt_1\rfloor}{n},\frac{\lfloor nt_2\rfloor}{n}]^2}(s\vee\tilde{s})^{-H}(s\wedge\tilde{s}\wedge|s-\tilde{s}|)^{-H(2\ell+3+2\kappa)}d\tilde{s},
\end{align*}
and consequently, by relation \eqref{eq:integralbound} and condition $a\leq H$,

\begin{align}\label{ineq:R12finalbound2pqA113}
n^{2a}|A_{1,1,3}|
  &\leq Cn^{-2H\kappa}\|g\|_{L^1(\R)}^2\times 
\left\{\begin{array}{ccc}
\delta^{-H}\rho_n(t_1,t_2)^{2-H(2\ell+2\kappa+1)}  &\text{ if }  & t_1,t_2\in[\delta,T]\\
\rho_n(t_1,t_2)^{2-H(2\ell+2\kappa+2)}             &\text{ if }  & t_1,t_2\in[0,T].
\end{array}\right.
\end{align}
On the other hand, by first using the integration by parts 
\begin{align*}
\int_{\R^2}g(y)g(\tilde{y})e^{-\textbf{i}(\frac{y\xi}{n^a}+\frac{\tilde{y}\tilde{\xi}}{n^a})}dyd\tilde{y}
  &=-\frac{n^{2a}}{\xi\tilde{\xi}}\int_{\R^2}g^{\prime}(y)g^{\prime}(\tilde{y})e^{-\textbf{i}(\frac{y\xi}{n^a}+\frac{\tilde{y}\tilde{\xi}}{n^a})}dyd\tilde{y}
\end{align*}
in \eqref{eq:Atilde113def} and then applying \eqref{eq:estimationip2p} and equation \eqref{eq:Lemmaauxgoal2} in  Lemma \ref{lem:techintegralbound} to the resulting expression, we obtain 
\begin{align*}
|\tilde{A}_{1,1,3}|
  &\leq Cn^{2a-4H -2 }\|g\|_{W^{1,1}}^2  \sum_{\substack{2\vee(nt_1)\leq i,j\leq nt_2\\|i-j|\leq 1}}\big(\frac{i\vee j}{n})^{-H}\big(\frac{1}{n})^{-H(2\ell+3)}.
\end{align*}
Therefore, by applying inequality \eqref{eq:bounimaxjint} with $\kappa$ replaced by $\kappa+1$ and condition $a\leq H$, we get
\begin{align*}
|\tilde{A}_{1,1,3}|
  &\leq Cn^{-2H(1+\kappa) -2 }\|g\|_{W^{1,1}}^2  \sum_{\substack{2\vee(nt_1)\leq i,j\leq nt_2\\|i-j|\leq 1}}\big(\frac{i\vee j}{n})^{-H}\big(\frac{i\wedge j\wedge(1+|i-j|)}{n})^{-H(2\ell+3)}\\
	&\leq Cn^{-2H(1+\kappa)}\|g\|_{W^{1,1}}^2\int_{[\frac{\lfloor nt_1\rfloor}{n},\frac{\lfloor nt_2\rfloor}{n}]^2}(s\vee\tilde{s})^{-H}(s\wedge\tilde{s}\wedge|s-\tilde{s}|)^{-H(2\ell+3+2\kappa)}d\vec{s},
\end{align*}
and consequently, by \eqref{eq:integralbound} and condition $a\leq H$,

\begin{align}\label{ineq:R12finalbound2pqAtil113}
n^{2a}|\tilde{A}_{1,1,3}|
  &\leq Cn^{-2H\kappa}\|g\|_{W^{1,1}}^2\times 
\left\{\begin{array}{ccc}
\delta^{-H}\rho_n(t_1,t_2)^{2-H(2\ell+2\kappa+1)}  &\text{ if }  & t_1,t_2\in[\delta,T]\\
\rho_n(t_1,t_2)^{2-H(2\ell+2\kappa+2)}             &\text{ if }  & t_1,t_2\in[0,T].
\end{array}\right.
\end{align}
Finally, by \eqref{eq:decompRt12}, \eqref{ineq:R12finalbound2pqA113} and \eqref{ineq:R12finalbound2pqAtil113}, we obtain

\begin{align}\label{ineq:R12finalbound2pq}
n^{2a}\|R_{t_2,1,2}^{(n)}-R_{t_1,1,2}^{(n)}\|_{L^{2}(\Omega)}^2
  &\leq Cn^{-2H\kappa}\|g\|_{W^{1,1}}^2\times 
\left\{\begin{array}{ccc}
\delta^{-H}\rho_n(t_1,t_2)^{2-H(2\ell+2\kappa+1)}  &\text{ if }  & t_1,t_2\in[\delta,T]\\
\rho_n(t_1,t_2)^{2-H(2\ell+2(\kappa+2))}             &\text{ if }  & t_1,t_2\in[0,T].
\end{array}\right.
\end{align}
Since $R_{t,1}^{(n)}=R_{t,1,2}^{(n)}+\delta(R_{t,1,1}^{(n,Sk)})$, from \eqref{eq:SkortermfinalboundBill}, \eqref{eq:deltaR11finalboundq} and \eqref{ineq:R12finalbound2pq}, we conclude that

\begin{multline}\label{eq:R1c2finalbound1}
\|n^{a}\delta( R_{t_2,1}^{(n)}-R_{t_1,1}^{(n,Sk)})\|_{L^{2}(\Omega)}^2\\
\begin{aligned}
& \leq  C\|g\|_{W^{2,1}}^2\times 
\left\{\begin{array}{ccc}
\delta^{-H}\rho_n(t_1,t_2)^{2-H(2\ell+2\kappa+3)}  &\text{ if }  & t_1,t_2\in[\delta,T]\\
\rho_n(t_1,t_2)^{2-2H(\ell+\kappa+2)}             &\text{ if }  & t_1,t_2\in[0,T]
\end{array}\right.
\end{aligned}
\end{multline} 
and
\begin{align}\label{eq:R1c2finalbound2}
\|n^{a}\delta(R_{t,1}^{(n)})\|_{L^2(\Omega)}^2
  &\leq Ct^{1-H}n^{-2H\kappa}\|g\|_{W^{2,1}}^2 .
\end{align}
\noindent This finishes the proof of  \eqref{eq:remindsfortight1222d} and \eqref{eq:remindsfortight12asdd} in the case $i=1.$\\

%=========================================================
\noindent To handle the case $i=2$, we reproduce the steps of the proof of \eqref{ineq:case1}, with $\kappa$ replaced by $1+\kappa$, in order to show that 
\begin{align}\label{ineq:case1v2}
n^{2a}\|R_{t_2,2}^{(n)}-R_{t_1,2}^{(n)}\|_{L^{2}(\Omega)}^2
 \leq C\|g\|_{L^1(\R)}^2n^{-2H\kappa}
\times \left\{\begin{array}{ccc}
\delta^{-H}\rho_n(t_1,t_2)^{2-H(2\ell+2\kappa+1)}  &\text{ if }  & t_1,t_2\in[\delta,T]\\
\rho_n(t_1,t_2)^{2-H(2\ell+2\kappa+2)}             &\text{ if }  & t_1,t_2\in[0,T].
\end{array}\right.
\end{align}
Relations \eqref{eq:remindsfortight1222d} and \eqref{eq:remindsfortight12asdd} follow from \eqref{ineq:case1v2}.\\

\noindent To handle the case $i=4$, we reproduce the proof of \eqref{eq:Rt3case1final}, with the following modifications:
\begin{enumerate}
\item[-] the index $i=3$ is replaced by $i=4$;
\item[-] the variable $\kappa$ is now replaced by $1+\kappa$;
\item[-] the terms $(e^{-\textbf{i}\xi\frac{y}{n^{H}}}-1 )$ and $(e^{-\textbf{i}\tilde{\xi}\frac{\tilde{y}}{n^{H}}}-1 )$ are replaced by $(e^{-\textbf{i}\xi\frac{y}{n^{H}}}-1+\textbf{i}\xi\frac{y}{n^{H}})$ and $(e^{-\textbf{i}\tilde{\xi}\frac{\tilde{y}}{n^{H}}}-1+\textbf{i}\tilde{\xi}\frac{\tilde{y}}{n^{H}})$ respectively;
\item[-] the inequality \eqref{eq:complexexpbound} is replaced by 
\begin{align*}
|e^{-\textbf{i}x}-1+\textbf{i}x|^{\kappa}
  \leq 2|x|^{1+\kappa}.
\end{align*}
\end{enumerate}
By doing these modifications, we can easily show that
\begin{align}\label{ineq:case1term4v2}
n^{2a}\|R_{t_2,4}^{(n)}-R_{t_1,4}^{(n)}\|_{L^{2}(\Omega)}^2
 \leq C\|g\|_{L^1(\R)}^2n^{-2H\kappa}
\left\{\begin{array}{ccc}
\delta^{-H}\rho_n(t_1,t_2)^{2-H(2\ell+2\kappa+1)}  &\text{ if }  & t_1,t_2\in[\delta,T]\\
\rho_n(t_1,t_2)^{2-H(2\ell+2\kappa+2)}             &\text{ if }  & t_1,t_2\in[0,T].
\end{array}\right.
\end{align}
Relations \eqref{eq:remindsfortight1222d} and \eqref{eq:remindsfortight12asdd} in the case $i=4$ follow from \eqref{ineq:case1term4v2}. This finishes the proof of Theorem \ref{thm:main2p}.

%=======================================================================================================
\section{Appendix}\label{ref:Appendix}
In this section we investigate the existence and regularity of the derivatives of the local time of a fBm, and present the proofs of the technical lemmas that were used in Sections \ref{sec:firstfluct} and \ref{sec:firstfluc2t2}.

For the rest of the section, $X$ is a fBm with Hurst parameter $H\in (0,1)$.

\begin{Lemma}\label{lem:techintegralboundasd}
Let  $\vec{s}=(s_1,\dots, s_{m}),\vec{u}=(u_1,\dots, u_{m})\in\R_{+}^{m}$, $\vec{z}=(z_1,\dots, z_m)\in\{0,1\}^{m}$ and $q_{1},\dots, q_{m}\in\N$ be such that $u_i\leq s_i$ for all $i=1,\dots,m$ and $s_{1}\leq \cdots \leq s_{m}$. Denote by $\Lambda$ the covariance matrix of $(N_{1},\dots, N_{m})$, with 
\begin{align*}
N_{i}
  :=X_{u_{i}}+z_{i}(X_{s_i}-X_{u_i})+\sqrt{1-z_i^2}(X_{s_i}^{(i)}-X_{u_i}^{(i)}),
\end{align*}
where $X^{(1)},\dots, X^{(m)}$ are independent copies of $X$. Then, if 
$\tau_{i}:=\min\{|s_{i}-s_{j}|\wedge|s_{i}-u_{j}|\ \processsymb\ 1\leq j\leq m, \ i\neq j\}$,  we have that
\begin{align}\label{eq:boundsstildesfixed}
\int_{\R^{m}}e^{-\frac{1}{2}\vec{\xi}^*\Lambda\vec{\xi}}\prod_{i=1}^{p}|\xi_{i}|^{q_{i}}d\vec{\xi}
  &\leq C s_{1}^{-H}\tau_1^{-Hq_1}\bigg(\prod_{j=2}^{p}(s_{j}-s_{j-1})^{-H} \tau_{j}^{-Hq_{i}}\bigg),
\end{align}
for some constant $C>0$ only depending on $H,\vec{q}$ and $m$.
\end{Lemma}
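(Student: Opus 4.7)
The plan is to reduce the integral to one-dimensional moment computations, exploiting the determinant identity \eqref{eq:detdecomp} for the factor $e^{-\frac{1}{2}\vec{\xi}^{*}\Lambda\vec{\xi}}$ and sectorial local nondeterminism \eqref{ineq:localnd} for the polynomial weight $\prod|\xi_{i}|^{q_{i}}$. First I would recognize the integral as a Gaussian moment,
\begin{equation*}
\int_{\R^{m}}e^{-\frac{1}{2}\vec{\xi}^{*}\Lambda\vec{\xi}}\prod_{i=1}^{m}|\xi_{i}|^{q_{i}}\,d\vec{\xi} \;=\; (2\pi)^{m/2}|\Lambda|^{-1/2}\,\E\!\left[\prod_{i=1}^{m}|Y_{i}|^{q_{i}}\right],
\end{equation*}
where $Y=(Y_{1},\ldots,Y_{m})\sim\mathcal{N}(0,\Lambda^{-1})$. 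The task therefore splits into bounding $|\Lambda|^{-1/2}$ and the expected moment on the right.

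For the determinant, I would invoke \eqref{eq:detdecomp} to write $|\Lambda|=\prod_{i=1}^{m}\sigma_{i}^{2}$, with $\sigma_{i}^{2}:=\mathrm{Var}[N_{i}\mid N_{1},\ldots,N_{i-1}]$ for $i\geq 2$ and $\sigma_{1}^{2}:=\mathrm{Var}[N_{1}]$. Since $X^{(i)}$ is independent of $X$ and of $\{X^{(j)}:j\neq i\}$, the independent component contributes an additive term $(1-z_{i})(s_{i}-u_{i})^{2H}$ to $\sigma_{i}^{2}$, while the rest equals $\mathrm{Var}[M_{i}\mid X_{s_{j}},X_{u_{j}}:j<i]$ with $M_{i}:=(1-z_{i})X_{u_{i}}+z_{i}X_{s_{i}}$. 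Applying \eqref{ineq:localnd} to $M_{i}$, together with the ordering $s_{1}\leq\cdots\leq s_{m}$, the constraint $u_{i}\leq s_{i}$, and the elementary inequality $a^{2H}+b^{2H}\geq c_{H}(a+b)^{2H}$ (with $c_{H}>0$ depending on whether $H$ lies above or below $1/2$), would yield $\sigma_{1}^{2}\geq c\,s_{1}^{2H}$ and $\sigma_{i}^{2}\geq c(s_{i}-s_{i-1})^{2H}$ for $i\geq 2$, producing the factor $s_{1}^{-H}\prod_{j=2}^{m}(s_{j}-s_{j-1})^{-H}$ in the desired bound.

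For the expected moment, H\"older's inequality with equal weights, together with the standard Gaussian estimate $\E[|Y_{i}|^{k}]\leq C_{k}(\mathrm{Var}\,Y_{i})^{k/2}$, yields $\E[\prod_{i}|Y_{i}|^{q_{i}}]\leq C_{\vec{q},m}\prod_{i}(\mathrm{Var}\,Y_{i})^{q_{i}/2}$. Since $\mathrm{Var}\,Y_{i}=(\Lambda^{-1})_{ii}=1/\mathrm{Var}[N_{i}\mid N_{j}:j\ne i]$, it remains to lower bound the conditional variance of $N_{i}$ given \emph{all} other $N_{j}$'s. Repeating the decomposition of the previous paragraph, but now conditioning on $\sigma(X_{s_{j}},X_{u_{j}}:j\ne i)\vee\sigma(X^{(j)}:j\ne i)$ --- which contains $\sigma(N_{j}:j\ne i)$ --- I would obtain
\begin{equation*}
\mathrm{Var}[N_{i}\mid N_{j}:j\ne i]\;\geq\;\mathrm{Var}[M_{i}\mid X_{s_{j}},X_{u_{j}}:j\ne i]+(1-z_{i})(s_{i}-u_{i})^{2H},
\end{equation*}
and then apply \eqref{ineq:localnd} to $M_{i}$, with a case analysis on $z_{i}\in\{0,1\}$, to deduce $\mathrm{Var}[N_{i}\mid N_{j}:j\ne i]\geq c\,\tau_{i}^{2H}$. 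Combining with the bound on $|\Lambda|^{-1/2}$ produces the claimed estimate.

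The main obstacle will be establishing that final lower bound $\mathrm{Var}[N_{i}\mid N_{j}:j\ne i]\geq c\,\tau_{i}^{2H}$, because $\tau_{i}$ measures distances from $s_{i}$ whereas \eqref{ineq:localnd} applied to $M_{i}$ naturally yields distances from $s_{i}$ (if $z_{i}=1$) or from $u_{i}$ (if $z_{i}=0$). In the delicate case $z_{i}=0$, bridging the two requires combining the noise contribution $(s_{i}-u_{i})^{2H}$ with local nondeterminism at $u_{i}$ via the triangle inequality $|s_{i}-t|\leq|u_{i}-t|+(s_{i}-u_{i})$ and the sub- or superadditivity of $x\mapsto x^{2H}$. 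For $i\geq 2$ the ordering forces $\tau_{i}\leq s_{i}-s_{i-1}\leq s_{i}$, so the implicit distance-to-$0$ term in \eqref{ineq:localnd} is never binding; the case $i=1$ requires a short separate argument.
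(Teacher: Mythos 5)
Your proposal is correct and follows essentially the same route as the paper: both factor the integral as $|\Lambda|^{-1/2}$ times a product of $\big((\Lambda^{-1})_{ii}\big)^{q_i/2}$ via H\"older (you apply it to the Gaussian expectation, the paper to the integral directly — the same computation), identify $(\Lambda^{-1})_{ii}$ with the reciprocal of $\mathrm{Var}[N_i\mid N_j: j\neq i]$ through the adjugate/determinant decomposition \eqref{eq:detdecomp}, bound that conditional variance below by $c\,\tau_i^{2H}$ using \eqref{ineq:localnd} with the same $z_i\in\{0,1\}$ case split and triangle-inequality bridge, and bound $|\Lambda|$ below by the telescoping product of conditional variances. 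Your remark about the implicit distance-to-the-origin term in \eqref{ineq:localnd} for $i=1$ is a point the paper's proof passes over silently, so flagging it is a welcome extra precaution rather than a deviation.
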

\begin{proof}
By the generalized H\"older inequality, we have that
\begin{align}\label{eq:CauchyShwarz}
\int_{\R^{m}}e^{-\frac{1}{2}\vec{\xi}^*\Lambda\vec{\xi}}\prod_{i=1}^{p}|\xi_{i}|^{q_{i}}d\vec{\xi}
  &\leq \prod_{i=1}^{p}\left(\int_{\R^{m}}e^{-\frac{1}{2}\vec{\xi}^*\Lambda\vec{\xi}}|\xi_{i}|^{pq_{i}}d\vec{\xi}\right)^{\frac{1}{p}}.
\end{align}
The terms of the product in the right hand side can be written in terms of conditional variances in the following way: define $\tilde{\Sigma}:=\Sigma^{-1}$ and denote by $\Phi_{\tilde{\Sigma}}$ the probability density of a centered Gaussian random vector with covariance $\tilde{\Sigma}$. Then, if $r\geq 0$,
\begin{align}
\int_{\R^{m}}e^{-\frac{1}{2}\vec{\xi}^*\Lambda\vec{\xi}}|\xi_i|^rd\vec{\xi}
  &=|\tilde{\Lambda}|^{\frac{1}{2}}\int_{\R^{m}}\Phi_{\tilde{\Sigma}}(\xi)|\xi_i|^rd\vec{\xi}
	=C_{r}|\Lambda|^{-\frac{1}{2}}\tilde{\Lambda}_{i,i}^{\frac{r}{2}}\label{eq:integralexpsig},
\end{align}
for some $C_{r}$ only depending on $r$. To bound $\tilde{\Lambda}_{i,i}$ we define $\Sigma$ as the covariance matrix 
\begin{align*}
\Sigma
  &=\text{Cov}[(N_{1},\dots, N_{i-1},N_{i+1},\dots, N_{m})].
\end{align*}
Then, by using \eqref{eq:detdecomp},  we express the determinant $|\Sigma|$ in the form 
\begin{align*}
|\Lambda|
  &=\bigg(\prod_{\substack{1\leq r\leq m\\r\neq m}}\text{Var}[N_{r}\ |\ N_{1},\dots, N_{r-1}]\bigg)\text{Var}[N_{i}\ |\ N_{j}\  \text{ with } \ j\neq i ]\\
	&=|\Sigma|\text{Var}[N_{i}\ |\ N_{j}\ \text{ with }\ j\neq i ].
\end{align*}
Therefore, if $adj(V)$ denotes the adjugate matrix of $V$, we have that
\begin{align}
\tilde{\Lambda}_{i,i}
  &=(\Lambda^{-1})_{i,i}=\frac{(adj(\Lambda))_{i,i}}{|\Lambda|}=\frac{|\Sigma|}{|\Lambda|}=\text{Var}[N_{i}\  |\ N_{j}\  \text{ with } \ j\neq i]^{-1}.\label{eq:integralexpsig2}
\end{align}
Combining \eqref{eq:integralexpsig} with \eqref{eq:integralexpsig2}, we obtain
\begin{align}\label{eq:integralexpsigpd}
\int_{ \R^{m}}e^{-\frac{1}{2}\vec{\xi}^*\Lambda\vec{\xi}}|\xi_i|^rd\vec{\xi}
  & =C_{r}|\Lambda|^{-\frac{1}{2}}\text{Var}[N_{i}\  |\ N_{j}\  \text{ with } \ j\neq i]^{-\frac{r}{2}}.
\end{align}
To bound   the conditional variance in the right-hand side we proceed as follows.\\

\noindent If $z_{i}=1$, then by the local non determinism property \eqref{ineq:localnd}, there exists a constant $c>0$ only depending on $H$, such that
\begin{align}\label{eq:Varnitaui1}
\text{Var}[N_{i}\  |\ N_{j}\  \text{ with } \ j\neq i]
  &\geq\text{Var}[X_{s_{i}}\  |\ X_{s_{j}},X_{u_j}\  \text{ with } \ j\neq i]
	\geq c\tau_{i}^{2H}.
\end{align}
On the other hand, if $z=0$, then by a further application local non determinism property \eqref{ineq:localnd},
\begin{align}\label{eq:boundcasezis0}
\text{Var}[N_{i}\  |\ N_{j}\  \text{ with } \ j\neq i]
  &\geq\text{Var}[X_{u_{i}}\  |\ X_{s_{j}},X_{u_j}\  \text{ with } \ j\neq i]+  |s_{i}-u_{i}|^{2H}\nonumber\\
	&\geq c\min_{j\neq i}((|u_{i}-s_{j}|\wedge|u_{i}-u_j|)^{2H}+|s_{i}-u_{i}|^{2H}).
\end{align}
Using \eqref{eq:boundcasezis0}, as well as the the fact that for every $x,y\in\R$, 
\begin{align}\label{ineq:Minkowskytype}
(|x|^a+|y|^a)^{1/a}
  &\geq\left\{\begin{array}{ccc}|x|+|y|&\text{ if } & a\leq 1\\\frac{1}{2}(|x|+|y|)&\text{ if } & a\geq 1,\end{array}\right.
\end{align}
we get
\begin{align}\label{eq:Varnitaui2}
\text{Var}[N_{i}\  |\ N_{j}\  \text{ with } \ j\neq i]
	&\geq c\bigg(\min_{j\neq i}(|u_{i}-s_{j}|+|s_{i}-u_{i}|)\wedge(|u_{i}-u_j|+|s_{i}-u_{i}|)\bigg)^{2H}\geq  c\tau_{i}^{2H}.
\end{align}
By \eqref{eq:integralexpsigpd}, \eqref{eq:Varnitaui1} and \eqref{eq:Varnitaui2}, we get the bound
\begin{align}\label{ineq:integralgoalmiddle}
\int_{\R^{m}}e^{-\frac{1}{2}\vec{\xi}^*\Lambda\vec{\xi}}|\xi_i|^rd\vec{\xi}
  &\leq C|\Lambda|^{-\frac{1}{2}}\prod_{i=1}^{m}\tau_{i}^{-Hq_{i}}.
\end{align}
It thus remains to bound the determinant $|\Lambda|$. To this end, we use \eqref{eq:varreduceinfo} to write
\begin{align}\label{eq:Varboundfortermsofdetprev}
|\Lambda|
  &=\text{Var}[N_1]\prod_{i=2}^{p}\text{Var}[N_{i}\ |\ N_{1},\dots, N_{i-1}]\nonumber\\
	&\geq\prod_{i=1}^{p}\text{Var}[X_{u_{i}}+z_{i}(X_{s_i}-X_{u_i})+\sqrt{1-z_i^2}(X_{s_i}^{(i)}-X_{u_i}^{(i)})\ |\ X_{u_j},X_{s_{j}}\ \text{ with }\ i\leq j-1]\nonumber\\	
	&=\prod_{i=1}^{p}\bigg(\text{Var}[X_{u_{i}}+z_{i}(X_{s_i}-X_{u_i})\ |\ X_{u_j},X_{s_{j}}\ \text{ with }\ i\leq j-1]+(1-z_i^2)|s_{i}-u_{i}|^{2H}\bigg),
\end{align}
where for convenience, we have used the notation $u_0=s_0=0$. Notice that if $z_{i}=1$, then by the local-non determinism property \eqref{ineq:localnd} and the condition $s_i\leq s_{i+1}$,
\begin{align}\label{eq:Varboundfortermsofdet}
\text{Var}[X_{u_{i}}+z_{i}(X_{s_i}-X_{u_i})\ |\ X_{u_j},X_{s_{j}}\ \text{ with }\ i\leq j-1]
  &\geq c |s_{i}-s_{j}|^{2H}.
\end{align}
On the other hand, if $z_{i}=0$, then by the local-non determinism property \eqref{ineq:localnd} and the inequality \eqref{ineq:Minkowskytype}, 
\begin{multline*}
\text{Var}[X_{u_{i}}+z_{i}(X_{s_i}-X_{u_i})\ |\ X_{u_j},X_{s_{j}}\ \text{ with }\ i\leq j-1]\\
\begin{aligned}
  &\geq c \min_{1\leq j\leq i-1}((|u_{i}-s_{j}|\wedge|u_{i}-u_{j}|)^{2H}+|s_i-u_i|^{2H})\\
	&\geq c^{\prime} \min_{1\leq j\leq i-1}\big((|u_{i}-s_{j}|+|s_i-u_i|)\wedge(|u_{i}-u_{j}|+|s_i-u_i|)\big)^{2H}\\
	&\geq c^{\prime} \min_{1\leq j\leq i-1}\big(|s_i-s_j|\wedge(|s_i-u_j|)\big)^{2H}.
\end{aligned}
\end{multline*}
Thus, using the condition $u_j\leq s_j\leq s_{j+1}$, we deduce that \eqref{eq:Varboundfortermsofdet} holds as well in the case $z_i=0$.
Relation \eqref{eq:boundsstildesfixed} follows from \eqref{ineq:integralgoalmiddle}, \eqref{eq:Varboundfortermsofdetprev} and \eqref{eq:Varboundfortermsofdet}.
\end{proof}

As a consequence of the previous lemma, we have the following result:

\begin{Lemma}\label{lem:techintegralbound}
Let $i,j\in\N$, $s,\tilde{s}\geq0$ and $z,\tilde{z}\in[0,1]$ be such that $2\leq i, j\leq nt$,  $\frac{i-1}{n}\leq s\leq \frac{i}{n}$ and 
$\frac{j-1}{n}\leq \tilde{s}\leq\frac{j}{n}$. Denote by $\Sigma$  the covariance matrix of the Gaussian vector $(N_1,N_2)$, defined by
\begin{align*} 
N_1
  &:=X_{\frac{i-1}{n}}+z(X_{s}-X_{\frac{i-1}{n}})+(1-z^2)^{\frac{1}{2}}(X_{s}^{\prime}-X_{\frac{i-1}{n}}^{\prime})\nonumber\\
N_2
  &:=X_{\frac{j-1}{n}}+\tilde{z}(X_{\tilde{s}}-X_{\frac{j-1}{n}})+(1-\tilde{z}^2)^{\frac{1}{2}}(X_{\tilde{s}}^{\prime}-X_{\frac{j-1}{n}}^{\prime}),
\end{align*}
where $X^{\prime},X^{\prime\prime}$ are independent copies of $X$. Then, for all $q,\tilde{q}\geq0$, there exists a constant $C>0$ only depending on $q,\tilde{q}$, $t$ and the Hurst parameter  $H$, such that
\begin{enumerate}
\item If  $|j-i|\geq 2$, 
\begin{align}\label{eq:Lemmaauxgoal}
\int_{\R^{2}}|\xi|^q|\tilde{\xi}|^{\tilde{q}}e^{-\frac{1}{2}\vec{\xi}^*\Sigma\vec{\xi}}d\vec{\xi}
  &\leq Cn^{H(q+\tilde{q}+2)}(i\vee j)^{-H}(i\wedge j\wedge|j-i|)^{H(q+\tilde{q}+1)},
\end{align}
where $\vec{\xi}:=(\xi,\tilde{\xi})$.

\item  If  $|j-i|\leq 1$ and $H(q+\tilde{q}+1)<1$, 
\begin{align}\label{eq:Lemmaauxgoal2}
\int_{\frac{i-1}{n}}^{\frac{i}{n}}\int_{\frac{j-1}{n}}^{\frac{j}{n}}\int_{\R^{2}}|\xi|^q|\tilde{\xi}|^{\tilde{q}}e^{-\frac{1}{2}\vec{\xi}^*\Sigma\vec{\xi}}d\vec{\xi}dsd\tilde{s}
  &\leq Cn^{H(q+\tilde{q}+2)-2}(i\vee j)^{-H}.
\end{align}
\end{enumerate}

\end{Lemma}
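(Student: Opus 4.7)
The plan is to deduce both bounds by specialising Lemma~\ref{lem:techintegralboundasd} to the case $m=2$. That lemma is stated for $z_i \in \{0,1\}$, while the present statement allows $z,\tilde z\in[0,1]$, so the first step will be to observe that its conclusion extends to this wider range. The point is that, for any sub-$\sigma$-algebra $\mathcal{A}$, the map
$$z_i \mapsto \text{Var}(N_i \mid \mathcal{A}) = \text{Var}((1-z_i)X_{u_i} + z_i X_{s_i} \mid \mathcal{A}) + (1-z_i^2)(s_i - u_i)^{2H}$$
is a quadratic function of $z_i$ whose leading coefficient equals $\text{Var}(X_{u_i}-X_{s_i}\mid\mathcal{A}) - (s_i-u_i)^{2H}$, and this is nonpositive because conditioning can only reduce variance. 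Hence the map is concave on $[0,1]$ and attains its minimum at one of the endpoints $z_i\in\{0,1\}$; the lower bounds on $\text{Var}(N_i\mid N_1,\ldots,N_{i-1})$ and $\text{Var}(N_i\mid N_j,\,j\neq i)$ established in the proof of Lemma~\ref{lem:techintegralboundasd} therefore remain valid for all $z_i\in[0,1]$.

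To prove \eqref{eq:Lemmaauxgoal}, by the symmetry of the integrand under swapping of $(i,s,z,q)$ with $(j,\tilde s,\tilde z,\tilde q)$ I may assume $i\leq j$. The hypothesis $|j-i|\geq 2$ yields the ordering $s\leq \tfrac{i}{n}\leq\tfrac{j-1}{n}\leq\tilde s$, so I can invoke the extended Lemma~\ref{lem:techintegralboundasd} with $(u_1,s_1,z_1,q_1) = (\tfrac{i-1}{n},s,z,q)$ and $(u_2,s_2,z_2,q_2) = (\tfrac{j-1}{n},\tilde s,\tilde z,\tilde q)$. A direct inspection of the four lengths in \eqref{eq:boundsstildesfixed} gives $s_1$ comparable to $i/n$, $s_2-s_1\geq \tfrac{j-i-1}{n}$ comparable to $|j-i|/n$, and $\tau_1,\tau_2$ both comparable to $|j-i|/n$. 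Substitution produces an upper bound of order $n^{H(q+\tilde q+2)}\,i^{-H}\,|j-i|^{-H(q+\tilde q+1)}$, which, using $i\leq j$ and $|j-i|\geq i\wedge|j-i|$, is dominated by the claimed $Cn^{H(q+\tilde q+2)}(i\vee j)^{-H}(i\wedge j\wedge|j-i|)^{-H(q+\tilde q+1)}$.

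For \eqref{eq:Lemmaauxgoal2}, the constraint $|j-i|\leq 1$ allows $s$ and $\tilde s$ to be arbitrarily close, so no uniform lower bound on the $\tau_k$'s is available and the integration over $(s,\tilde s)$ becomes essential. My plan is to apply the extended Lemma~\ref{lem:techintegralboundasd} pointwise in $(s,\tilde s)$: assuming without loss of generality $s\leq\tilde s$, the same assignment gives $s_2-s_1 = \tilde s - s$ and $\tau_k\geq|\tilde s-s|$, producing the inner estimate
$$\int_{\R^{2}} |\xi|^q |\tilde{\xi}|^{\tilde{q}} e^{-\frac{1}{2}\vec{\xi}^*\Sigma \vec{\xi}}\, d\vec{\xi} \leq C \left(\frac{i\vee j}{n}\right)^{-H} (\tilde s - s)^{-H(q+\tilde q+1)}.$$
Integrating over $s\in[\tfrac{i-1}{n},\tfrac{i}{n}]$ and $\tilde s\in[\tfrac{j-1}{n},\tfrac{j}{n}]$ (two intervals of length $1/n$ meeting at most at one endpoint), the assumption $H(q+\tilde q+1)<1$ guarantees that the double integral of $(\tilde s-s)^{-H(q+\tilde q+1)}$ is convergent and comparable to $n^{H(q+\tilde q+1)-2}$; combined with the prefactor this yields exactly $Cn^{H(q+\tilde q+2)-2}(i\vee j)^{-H}$.

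The main obstacle I expect is the very first step, namely verifying cleanly that the argument of Lemma~\ref{lem:techintegralboundasd} remains valid for $z_i\in[0,1]$ via the concavity observation above; once this extension is granted, both inequalities reduce to a careful bookkeeping of the lengths $s_1, s_2-s_1, \tau_1, \tau_2$, together with, in the second case, the elementary integration that highlights the role of the constraint $H(q+\tilde q+1)<1$.
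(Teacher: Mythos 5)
Your concavity observation is a genuine improvement: Lemma \ref{lem:techintegralboundasd} is stated only for $z_i\in\{0,1\}$, the paper applies it to $z,\tilde z\in[0,1]$ without comment, and your remark that $z_i\mapsto \mathrm{Var}(N_i\mid\mathcal{A})$ is a concave quadratic (leading coefficient $\mathrm{Var}(X_{s_i}-X_{u_i}\mid\mathcal{A})-(s_i-u_i)^{2H}\le 0$), hence minimized at an endpoint, cleanly closes that gap. Your treatment of \eqref{eq:Lemmaauxgoal} is also correct: the bookkeeping $s_1\gtrsim i/n$, $s_2-s_1\gtrsim |j-i|/n$, $\tau_1,\tau_2\gtrsim|j-i|/n$ is valid because $j-i-1\ge\tfrac12|j-i|$, and the resulting $i^{-H}|j-i|^{-H(q+\tilde q+1)}$ is indeed dominated by $(i\vee j)^{-H}(i\wedge j\wedge|j-i|)^{-H(q+\tilde q+1)}$ (a short case split on whether $i\le|j-i|$ is needed to see this, but it goes through; note the positive exponent in the displayed \eqref{eq:Lemmaauxgoal} is a typo for a negative one, as all applications confirm).

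The proof of \eqref{eq:Lemmaauxgoal2}, however, has a genuine gap: the claim $\tau_k\ge|\tilde s-s|$ is false. By definition $\tau_1=|s_1-s_2|\wedge|s_1-u_2|=(\tilde s-s)\wedge\bigl|s-\tfrac{j-1}{n}\bigr|$, and when $|i-j|\le 1$ the second term is not controlled by $\tilde s-s$: for $j=i$ it equals $s-\tfrac{i-1}{n}$, which tends to $0$ as $s\downarrow\tfrac{i-1}{n}$ while $\tilde s-s$ stays of order $1/n$ (similarly $\tfrac{i}{n}-s$ for $j=i+1$). Consequently the pointwise inner estimate $C\bigl(\tfrac{i\vee j}{n}\bigr)^{-H}(\tilde s-s)^{-H(q+\tilde q+1)}$ does not follow from Lemma \ref{lem:techintegralboundasd}; what that lemma actually yields is
\begin{align*}
C\,s^{-H}(\tilde s-s)^{-H}\Bigl((\tilde s-s)\wedge\bigl|s-\tfrac{j-1}{n}\bigr|\Bigr)^{-Hq}(\tilde s-s)^{-H\tilde q},
\end{align*}
which is more singular near the endpoints of the $s$-interval. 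The repair is to keep the full minimum, use $\tilde s(\tilde s-s)\le 2 s(\tilde s-s)$ to extract $(\tfrac{i\vee j}{n})^{-H}$, bound the remaining factor by $\bigl(\min(\cdots)\bigr)^{-H(q+\tilde q+1)}\le\sum(\cdots)^{-H(q+\tilde q+1)}$ over the finitely many competing lengths, and integrate each summand separately over the two intervals of length $1/n$; each such integral is $O\bigl(n^{H(q+\tilde q+1)-2}\bigr)$ precisely because $H(q+\tilde q+1)<1$, which recovers \eqref{eq:Lemmaauxgoal2}. This is exactly where the paper's proof spends its effort, and it cannot be short-circuited by pretending all the $\tau_k$ are comparable to $\tilde s-s$.
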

\begin{proof}
We first prove \eqref{eq:Lemmaauxgoal} in the case $|i-j|\geq 2$. Suppose without loss of generality that $\frac{i}{n}\leq s\leq \frac{j-1}{n}\leq \tilde{s}$. By applying Lemma \ref{lem:techintegralboundasd} in the case $m=2$, $u_1=\frac{i-1}{n}$, $u_{2}=\frac{j-1}{n}$ and $s_1=s$, $s_{2}=\tilde{s}$, we have that 
\begin{align}\label{lem:techintegralboundasdapp}
\int_{\R^{2}}|\xi|^q|\tilde{\xi}|^{\tilde{q}}e^{-\frac{1}{2}\vec{\xi}^*\Sigma\vec{\xi}}d\vec{\xi}
  &\leq C (s(\tilde{s}-s))^{-H}(|s\wedge(\frac{j-1}{n}-s))^{-Hq}|\tilde{s}-\frac{i}{n}|^{-H\tilde{q}}\nonumber\\
	&\leq C  \tilde{s}^{-H}(s\wedge(\tilde{s}-s))^{-H}(|s\wedge(\frac{j-1}{n}-s))^{-Hq}|\tilde{s}-\frac{i}{n}|^{-H\tilde{q}},
\end{align}
where in the second inequality we have used the fact that for every $0\leq s\leq \tilde{s}$, 
\begin{align}\label{ineq:inequsstilde}
\tilde{s}(s\wedge(\tilde{s}-s))
  &\leq 2(s\vee (\tilde{s}-s))(s\wedge(\tilde{s}-s))
	=2s(\tilde{s}-s).
\end{align}
Notice that if $|i-j|,i,j\geq 2$, then $|i-1|\geq\frac{i}{2}$ and $j-i-1\geq \frac{1}{2}(j-i-1)$, which by \eqref{lem:techintegralboundasdapp} implies that 
\begin{align*}
\int_{\R^{2}}|\xi|^q|\tilde{\xi}|^{\tilde{q}}e^{-\frac{1}{2}\vec{\xi}^*\Sigma\vec{\xi}}d\vec{\xi}
  &\leq C (\frac{i}{n})^{-H}(\frac{j-i}{n})^{-H}(|\frac{i-1}{n}\wedge(\frac{j-1}{n}-\frac{i}{n}))^{-Hq}|\frac{j}{n}-\frac{i}{n}|^{-H\tilde{q}}\\
  &\leq C (\frac{i}{n})^{-H}(\frac{j-i}{n})^{-H}(\frac{i}{n}\wedge(\frac{j-i}{n}))^{-Hq}(\frac{j-i}{n})^{-H\tilde{q}}\\
	&= C (\frac{i\vee(j-i)}{n})^{-H}(\frac{i\wedge(j-i)}{n})^{-H}(\frac{i}{n}\wedge(\frac{j-i}{n}))^{-Hq}(\frac{j-i}{n})^{-H\tilde{q}}.
\end{align*}
Inequality \eqref{eq:Lemmaauxgoal} under the condition $|i-j|,i,j\geq 2$ follows by combining the previous inequality with the fact that $i\leq j\leq 2i\vee(j-i)$.\\

\noindent To prove \eqref{eq:Lemmaauxgoal2} under the condition $|i-j|\leq  1$,  we will assume without loss of generality that $i\leq j$, and define
\begin{align*}
I
  &:=\int_{[\frac{i-1}{n},\frac{j}{n}]^2}\int_{\R^{2}}|\xi|^q|\tilde{\xi}|^{\tilde{q}}e^{-\frac{1}{2}\vec{\xi}^*\Sigma\vec{\xi}}d\vec{\xi}d\vec{\eta}d\vec{s}.
\end{align*}
Notice that $I$ is an upper bound for the integral in the left-hand side of \eqref{eq:Lemmaauxgoal2}. Consequently, in order to prove \eqref{eq:Lemmaauxgoal2} it suffices show that
\begin{align}\label{eq:Lemmaauxgoal2sd}
I  &\leq Cn^{H(q+\tilde{q}+2)-2}(i\vee j)^{-H}.
\end{align}
Using  Lemma \ref{lem:techintegralboundasd}, we can check that 
\begin{align*}
I
  &\leq C\int_{[\frac{i-1}{n},\frac{j}{n}]^2}\Indi{\{s_1\leq s_2\}}(\tilde{s}(\tilde{s}-s))^{-H}\nonumber\\
	&\times \big((\tilde{s}-s)\wedge (s-\frac{i-1}{n})\wedge (\frac{i}{n}-s)\wedge(\tilde{s}-\frac{j-1}{n})\wedge (\frac{j}{n}-\tilde{s})\big)^{-H(q+\tilde{q})}d\vec{s}.
\end{align*}
Hence, by \eqref{ineq:inequsstilde},

\begin{align*}
I
  &\leq C\big(\frac{i}{n}\big)^{-H}\int_{[\frac{i-1}{n},\frac{j}{n}]^2}\Indi{\{s_1\leq s_2\}}\nonumber\\
	&\times \big((\tilde{s}-s)\wedge (s-\frac{i-1}{n})\wedge (\frac{i}{n}-s)\wedge(\tilde{s}-\frac{j-1}{n})\wedge (\frac{j}{n}-\tilde{s})\big)^{-H(q+\tilde{q}+1)}d\vec{s}\\
	&\leq C\big(\frac{i}{n}\big)^{-H}\int_{[\frac{i-1}{n},\frac{j}{n}]^2}\Indi{\{s_1\leq s_2\}}\bigg((\tilde{s}-s)^{-H(q+\tilde{q}+1)}+ (s-\frac{i-1}{n})^{-H(q+\tilde{q}+1)}\nonumber\\
	&+ (\frac{i}{n}-s)^{-H(q+\tilde{q}+1)}+(\tilde{s}-\frac{j-1}{n})^{-H(q+\tilde{q}+1)}+ (\frac{j}{n}-\tilde{s})^{-H(q+\tilde{q}+1)}\big)\bigg)d\vec{s}.
\end{align*}
Relation \eqref{eq:Lemmaauxgoal2sd} follows by integrating the right-hand side of the previous inequality.
\end{proof}

The next statement is one of the \blue{main technical contributions of the paper}.

\begin{Lemma}\label{Lem:existLocalderiv}
Let $\ell\in\Z$ be such that $0< H<\frac{1}{2\ell+1}$. Then, for every $t\geq0$ and $\lettrequivabien\in\R$, as $\varepsilon\to 0$ the random variables
\begin{align}\label{eq:Ltnellapprox}
L_{t,\varepsilon}^{(\ell)}(\lettrequivabien):=\int_{0}^{t}\phi_{\varepsilon}^{(\ell)}(X_{s}-\lettrequivabien)ds,
\end{align}
converge in $L^{2}(\Omega)$ to a limit $L^{(\ell)}_t(\lettrequivabien)$,  as $\varepsilon\rightarrow0$ . The limit random variable $L_{t}^{(\ell)}(\lettrequivabien)$ can be written in Fourier form as
\begin{align}\label{eq:Fourierrepmain}
L_{t}^{(\ell)}(\lettrequivabien)
  &=\int_{\R}\int_{0}^{t}(\textbf{i}\xi)^{\ell}e^{\textbf{i}\xi(X_{s}-\lettrequivabien)}dsd\xi:=\lim_{N\rightarrow\infty}\int_{-N}^{N}\int_{0}^{t}(\textbf{i}\xi)^{\ell}e^{\textbf{i}\xi(X_{s}-\lettrequivabien)}dsd\xi,
\end{align}
where the limit in the right hand side is understood in the sense of $L^{2}(\Omega).$ {\color{black}Moreover, if $\ell\geq 1$, we have that 
\begin{align}\label{eq:LTderivativeapprox}
L_{t}^{(\ell)}(\lettrequivabien)
  &=\lim_{h\rightarrow 0}\frac{1}{h}(L_{t}^{(\ell-1)}(\lettrequivabien+h)-L_{t}^{(\ell-1)}(\lettrequivabien)),
\end{align}
where the limit is understood in the $L^{2}(\Omega)$-sense.} In addition, for fixed $\lettrequivabien$ and for all $\gamma<1-H(\ell+1)$ and $p>1$, the process $\{L_{t}^{\ell}(\lettrequivabien)\ \processsymb\ t\geq0\}$ obtained as the pointwise limit in $L^{2}(\Omega)$ of $\{L_{t,n}^{\ell}\ \processsymb\ t\geq0\}$, has a modification with H\"older continuous trajectories (in the variable $t$) of order $\gamma$ and 
\begin{align}\label{eq:momentsofincLt}
\|L_{v}^{(\ell)}(\lettrequivabien)-L_{u}^{(\ell)}(\lettrequivabien)\|_{L^{2p}(\Omega)}^{2p}
  &\leq C|u-v|^{2p(1-H(\ell+1))},
\end{align}
for all $u<v$, where $C$ is a constant independent of $u$ and $v$.
\end{Lemma}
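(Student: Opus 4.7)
My plan relies on a single Gaussian/Fourier-analytic core that handles every assertion at once. Writing $\phi_{\varepsilon}(x)=(2\pi)^{-1}\int_{\R}e^{-\varepsilon\xi^{2}/2}e^{\textbf{i}\xi x}d\xi$, differentiating $\ell$ times and applying Fubini gives
\begin{align*}
L_{t,\varepsilon}^{(\ell)}(\lettrequivabien)=\frac{1}{2\pi}\int_{\R}\int_{0}^{t}(\textbf{i}\xi)^{\ell}e^{-\varepsilon\xi^{2}/2}e^{\textbf{i}\xi(X_{s}-\lettrequivabien)}dsd\xi.
\end{align*}
To prove convergence in $L^{2}(\Omega)$ as $\varepsilon\to 0$, I would compute the cross expectation $\E[L_{t,\varepsilon}^{(\ell)}(\lettrequivabien)\overline{L_{t,\varepsilon'}^{(\ell)}(\lettrequivabien)}]$: after expanding the product and invoking Gaussianity to replace $\E[\exp(\textbf{i}\xi X_{s}-\textbf{i}\eta X_{s'})]$ by $\exp(-\tfrac{1}{2}\vec{\chi}^{*}\Lambda\vec{\chi})$ with $\vec{\chi}=(\xi,-\eta)$ and $\Lambda$ the covariance matrix of $(X_{s},X_{s'})$, Lemma \ref{lem:techintegralboundasd} applied with $m=2$, $u_{i}=s_{i}$, $q_{i}=\ell$ bounds the inner Gaussian integral uniformly in $\varepsilon,\varepsilon'$ by $C(s\wedge s')^{-H}|s-s'|^{-H(2\ell+1)}$, which is integrable on $[0,t]^{2}$ precisely when $H(2\ell+1)<1$. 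Dominated convergence yields the Cauchy property; the same bound also removes the truncation $|\xi|\leq N$ in $L^{2}(\Omega)$, producing the Fourier representation \eqref{eq:Fourierrepmain}.

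For \eqref{eq:LTderivativeapprox}, I would insert the Fourier representation of $L_{t}^{(\ell-1)}$ and factor out the $\lettrequivabien$-dependence:
\begin{align*}
\frac{L_{t}^{(\ell-1)}(\lettrequivabien+h)-L_{t}^{(\ell-1)}(\lettrequivabien)}{h}=\frac{1}{2\pi}\int_{\R}\int_{0}^{t}(\textbf{i}\xi)^{\ell-1}\,\frac{e^{-\textbf{i}\xi h}-1}{h}\,e^{\textbf{i}\xi(X_{s}-\lettrequivabien)}dsd\xi.
\end{align*}
Since $|(e^{-\textbf{i}\xi h}-1)/h|\leq|\xi|$ and the pointwise limit $(e^{-\textbf{i}\xi h}-1)/h\to-\textbf{i}\xi$ holds as $h\to 0$, the $L^{2}$-estimate from the previous step, invoked at level $\ell$ in place of $\ell-1$, provides an integrable majorant. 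Dominated convergence in $L^{2}(\Omega)$ then passes $h\to 0$ inside the integral and identifies the limit with $L_{t}^{(\ell)}(\lettrequivabien)$.

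For the moment bound \eqref{eq:momentsofincLt} I would start from
\begin{align*}
\E\bigl[(L_{v}^{(\ell)}(\lettrequivabien)-L_{u}^{(\ell)}(\lettrequivabien))^{2p}\bigr]=\frac{1}{(2\pi)^{2p}}\int_{[u,v]^{2p}}\int_{\R^{2p}}\prod_{k=1}^{2p}(\textbf{i}\xi_{k})^{\ell}\,e^{-\textbf{i}\lettrequivabien\sum_{k}\xi_{k}}e^{-\frac{1}{2}\vec{\xi}^{*}\Lambda\vec{\xi}}d\vec{\xi}d\vec{s},
\end{align*}
where $\Lambda$ is the covariance matrix of $(X_{s_{1}},\dots,X_{s_{2p}})$. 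Ordering $s_{1}<\cdots<s_{2p}$ and applying Lemma \ref{lem:techintegralboundasd} with $u_{i}=s_{i}$, $q_{i}=\ell$ produces the bound $Cs_{1}^{-H}\prod_{j=2}^{2p}(s_{j}-s_{j-1})^{-H}\prod_{j=1}^{2p}\tau_{j}^{-H\ell}$. Charging each $\tau_{j}^{-H\ell}$ to an adjacent gap via $\tau_{j}\leq s_{j+1}-s_{j}$ or $\tau_{j}\leq s_{j}-s_{j-1}$, then rescaling $s_{k}=u+(v-u)\rho_{k}$ with $\rho_{k}\in[0,1]$, factors out $(v-u)^{2p(1-H(\ell+1))}$ from a dimensionless simplex integral whose convergence follows from $H(2\ell+1)<1$. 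Kolmogorov's continuity criterion with $p$ sufficiently large then produces a H\"older-continuous modification of every order $\gamma<1-H(\ell+1)$.

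The main obstacle is the bookkeeping of the nearest-neighbor factors $\tau_{j}^{-H\ell}$: an assignment of each $\tau_{j}$ to an adjacent gap must be chosen so that, after aggregation, no gap carries a singular exponent larger than $H(2\ell+1)$. The forced assignment of $\tau_{1}$ to the first gap and $\tau_{2p}$ to the last produces exactly one gap with worst-case exponent $H(2\ell+1)$ and leaves the remaining gaps with exponent $H(\ell+1)$; the hypothesis $H(2\ell+1)<1$ is precisely what ensures that the dimensionless simplex integral converges, and a direct power count in the scaling yields the advertised exponent $2p(1-H(\ell+1))$.
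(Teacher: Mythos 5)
Your proposal follows the paper's proof essentially step for step: the Fourier representation of $\phi_{\varepsilon}^{(\ell)}$, the cross-moment computation bounded via Lemma \ref{lem:techintegralboundasd} with the singularity $(s\wedge s')^{-H}|s-s'|^{-H(2\ell+1)}$ made integrable by $H(2\ell+1)<1$, the same truncation argument for \eqref{eq:Fourierrepmain}, and the $2p$-fold Gaussian integral with the nearest-neighbour gap bookkeeping (each gap carrying exponent at most $H(2\ell+1)<1$, total power $2p(1-H(\ell+1))$) feeding into Kolmogorov's criterion. The only deviation is in the proof of \eqref{eq:LTderivativeapprox}, where you use the crude bound $\left|\bigl(e^{-\textbf{i}\xi h}-1\bigr)/h\right|\leq|\xi|$ together with dominated convergence, while the paper uses $|e^{-\textbf{i}\eta}-1-\textbf{i}\eta|\leq 3|\eta|^{1+\varepsilon}$ to additionally extract a rate $O(h^{2\varepsilon})$; both are valid because the hypothesis $H(2\ell+1)<1$ is a strict inequality.
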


\begin{Remark} {\rm The arguments used in the proofs of Lemma \ref{Lem:existLocalderiv} and Lemma \ref{lem:nonexistenceLell} also show that, for every fixed $t>0$ and $\lettrequivabien \in \R$, the variance of $L_{t}^{(\ell)}(\lettrequivabien)$ is strictly positive.
}
\end{Remark}

\begin{proof}[Proof of Lemma \ref{Lem:existLocalderiv}]
%Notice that for every $m,n\geq 1$, we have that 
%\begin{align*}
%\|L_{t,n}^{(\ell)}(\lettrequivabien)-L_{t,m}^{(\ell)}(\lettrequivabien)	\|_{L^{2}(\Omega)}^2
%   &=\E[(L_{t,n}^{(\ell)}(\lettrequivabien))^2]+\E[(L_{t,m}^{(\ell)}(\lettrequivabien))^2]-2\E[L_{t,n}^{(\ell)}(\lettrequivabien)L_{t,m}^{(\ell)}(\lettrequivabien)],
%\end{align*}
It suffices to show that, for every pair of positive sequences $\{\varepsilon_n, \eta_n\}$ such that $\varepsilon_n, \eta_n\to 0$, one has that, as $n\to\infty$, the sequence
\begin{align*}
\E[L_{t,\varepsilon_n}^{(\ell)}(\lettrequivabien)L_{t,\eta_n}^{(\ell)}(\lettrequivabien)]
\end{align*}
converges to a finite limit, independent of the choice of $\{\varepsilon_n, \eta_n\}$. To this end, we first write $\phi_{\varepsilon}^{(\ell)}$, for $\varepsilon>0$ as 
\begin{align}\label{eq:phifourierrep}
\phi_{\varepsilon}^{(\ell)}(x)
  &=\frac{1}{2\pi}\int_{\R}(\textbf{i}\xi)^{\ell}e^{\textbf{i}\xi  x-\frac{\varepsilon}{2}\xi^2}d\xi.
\end{align}
For $\vec{s}=(s,\tilde{s})\in\R_{+}^2$, denote by $\Sigma(\vec{s})$ the covariance matrix of $(X_{s},X_{\tilde{s}})$ and by $A(n )$ the diagonal matrix with $A_{1,1}=\varepsilon_n$ and $A_{2,2}=\eta_n$. From \eqref{eq:phifourierrep}, it follows that
\begin{align}\label{eq:LtnLtmfirstrel}
\E[L_{t,\varepsilon_n}^{(\ell)}(\lettrequivabien)L_{t,\eta_n}^{(\ell)}(\lettrequivabien)]
  &=\frac{(-1)^{\ell}}{4\pi^2}\int_{[0,t]^2}\int_{\R^2}( \xi\tilde{\xi})^{\ell}e^{-\textbf{i}\xi\lettrequivabien-\textbf{i}\tilde{\xi}\lettrequivabien}e^{-\frac{1}{2}\vec{\xi}^*(\Sigma(\vec{s})+A(n))\vec{\xi}}d\vec{\xi}d\vec{s},
\end{align}
where $\vec{\xi}:=(\xi,\tilde{\xi})$, $\vec{s}:=(s,\tilde{s})$. Notice that the integrand in \eqref{eq:LtnLtmfirstrel} converges as $n\rightarrow\infty$ and satisfies
\begin{align*}
|( \xi\tilde{\xi})^{\ell}e^{-\textbf{i}\xi\lettrequivabien-\textbf{i}\tilde{\xi}\lettrequivabien}e^{-\frac{1}{2}\vec{\xi}^*(\Sigma(\vec{s})+A(n))\vec{\xi}}|
  &\leq |\xi\tilde{\xi}|^{\ell}e^{-\frac{1}{2}\vec{\xi}^*\Sigma(\vec{s})\vec{\xi}}.
\end{align*}
Moreover, by applying  inequality \eqref{eq:boundsstildesfixed} in Lemma \ref{lem:techintegralbound}, we have that
\begin{align}\label{ineq:integralofabsgaussker}
\int_{[0,t]^2}\int_{\R^2}|\xi\tilde{\xi}|^{\ell}e^{-\frac{1}{2}\vec{\xi}^*\Sigma(\vec{s})\vec{\xi}}d\vec{\xi}d\vec{s}
  &\leq C\int_{[0,t]^2}(s\vee \tilde{s})^{-H}(s\wedge\tilde{s}\wedge|s-\tilde{s}|)^{-H(2\ell+1)}d\vec{s}.
\end{align} 
The right hand side is of the previous inequality is finite due to the condition $H(2\ell+1)<1$, and thus, by the dominated convergence theorem, we conclude that $\E[L_{t,\varepsilon_n}^{(\ell)}(\lettrequivabien)L_{t,\eta_n}^{(\ell)}(\lettrequivabien)]$ 
converges to a limit independent of the choice of $\{\varepsilon_n, \eta_n\}$. \\

To show \eqref{eq:Fourierrepmain}, we define $\varepsilon_{M}:=M^{-2}$, for $M>0$. By applying the Fourier inversion formula to the Gaussian kernel $\phi_{\varepsilon_{M}}$, we can easily check that
\begin{align*}
\int_{0}^{t}\phi_{\varepsilon_{M}}^{(\ell)}(X_{s})ds
  &=\frac{1}{2\pi}\int_{\R}\int_{0}^{t}(\textbf{i}\xi)^{\ell}e^{\textbf{i}\xi(X_{s}-\lettrequivabien)-\frac{1}{2}\varepsilon_{M}\xi^2}dsd\xi,
\end{align*}
and thus,
\begin{align}\label{eq:espectralapproxlocaltime}
\frac{1}{2\pi}\int_{[-M,M]}\int_{0}^{t}(\textbf{i}\xi)^{\ell}e^{\textbf{i}\xi(X_{s}-\lettrequivabien)}dsd\xi
  &=T_{1}^{(n)}+T_{2}^{(n)}+\int_{0}^{t}\phi_{\varepsilon_{M}}^{(\ell)}(X_{s})ds,
\end{align}
where 
\begin{align*}
T_{1}^{(M)}
  &:=\frac{1}{2\pi}\int_{[-M,M]}\int_{0}^{t}(\textbf{i}\xi)^{\ell}e^{\textbf{i}\xi(X_{s}-\lettrequivabien)}(1-e^{-\frac{1}{2} \varepsilon_{M}\xi^{2}})dsd\xi\\
T_{2}^{(M)}
	&:=\frac{1}{2\pi}\int_{\R}\int_{0}^{t}(\textbf{i}\xi)^{\ell}e^{\textbf{i}\xi(X_{s}-\lettrequivabien)-\frac{1}{2}\sigma_{M}^2\xi^2}\Indi{[-M,M]^c}(\xi)dsd\xi.
\end{align*}
Since the third term in the right hand side of \eqref{eq:espectralapproxlocaltime} converges to $L_{t}^{(\ell)}(z)$  as $M\rightarrow\infty$ , it suffices to show that $T_{1}^{(M)}$ and $T_{2}^{(M)}$ converge to zero in $L^{2}(\Omega)$. Proceeding as in the proof of \eqref{eq:LtnLtmfirstrel}, we can easily show that 
\begin{align*}
\|T_{1}^{(M)}\|_{L^{2}(\Omega)}^2
  &=\frac{(-1)^{\ell}}{4\pi^2}\int_{[0,t]^2}\int_{[-M,M]^2}( \xi\tilde{\xi})^{\ell}e^{-\textbf{i}\xi\lettrequivabien-\textbf{i}\tilde{\xi}\lettrequivabien}e^{-\frac{1}{2}\vec{\xi}^*(\Sigma(\vec{s})+\sigma_{M}^2)\vec{\xi}}(1-e^{-\frac{1}{2} \varepsilon_{M}\xi^{2}})
	(1-e^{-\frac{1}{2} \varepsilon_{M}\tilde{\xi}^{2}})d\vec{\xi}d\vec{s}.
\end{align*}
Thus, using the fact that $\varepsilon_M|\xi|^2\leq \frac{1}{M}$ for all $\xi\in[-M,M]$, we  obtain
\begin{align*}
\|T_{1}^{(M)}\|_{L^{2}(\Omega)}^2
  &\leq \frac{1}{M^2}\int_{[0,t]^2}\int_{\R^2}|\xi\tilde{\xi}|^{\ell}e^{-\frac{1}{2}\vec{\xi}^*\Sigma(\vec{s})\vec{\xi}}d\vec{\xi}d\vec{s},
\end{align*}
which by \eqref{ineq:integralofabsgaussker}, leads to $\lim_{M\rightarrow\infty}\|T_{1}^{(M)}\|_{L^{2}(\Omega)}^2=0$. By following similar arguments, we can show that 
\begin{align*}
\|T_{2}^{(M)}\|_{L^{2}(\Omega)}^2
  &\leq \int_{[0,t]^2}\int_{\R^2}|\xi\tilde{\xi}|^{\ell}e^{-\frac{1}{2}\vec{\xi}^*\Sigma(\vec{s})\vec{\xi}}\Indi{[-M,M]}(\xi)\Indi{[-M,M]}(\tilde{\xi})d\vec{\xi}d\vec{s}.
\end{align*}
The integrand of the right-hand side is bounded by $e^{-\frac{1}{2}\vec{\xi}^*\Sigma(\vec{s})\vec{\xi}}$, which by \eqref{ineq:integralofabsgaussker} is integrable over $\vec{s}\in[0,t]^2$ and $\vec{\xi}\in\R^2$. Thus, by the dominated convergence theorem, $\lim_{M\rightarrow\infty}\|T_{2}^{(M)}\|_{L^{2}(\Omega)}^2=0$, as required. This finishes the proof of \eqref{eq:Fourierrepmain}.\\

\noindent Next we prove \eqref{eq:LTderivativeapprox} in the case $\ell\geq 1$. Using the representation \eqref{eq:Fourierrepmain}, we have that 
\begin{align*}
L_{t}^{(\ell-1)}(\lettrequivabien+h)-L_{t}^{(\ell-1)}(\lettrequivabien)-hL_{t}^{(\ell)}(\lettrequivabien)
  &=\int_{\R}\int_{0}^{t}(\textbf{i}\xi)^{\ell-1}e^{\textbf{i}\xi (X_{s}-\lambda)}(e^{-\textbf{i}\xi h}-1-h\textbf{i}\xi)dsd\xi.
\end{align*}
Proceeding as before, we can bound this $L^{2}(\Omega)$-norm as follows
\begin{align}\label{Ltdiffapproxv1}
\|L_{t}^{(\ell-1)}(\lettrequivabien+h)-L_{t}^{(\ell-1)}(\lettrequivabien)-hL_{t}^{(\ell)}(\lettrequivabien)\|_{L^{2}(\Omega)}^2
  &\leq \int_{[0,t]^2}\int_{\R^2}|\xi\tilde{\xi}|^{\ell-1}e^{-\frac{1}{2}\vec{\xi}^*\Sigma(\vec{s})\vec{\xi}}\\
	&\times |(e^{-\textbf{i}\xi h}-1-h\textbf{i}\xi)(e^{-\textbf{i}\tilde{\xi} h}-1-h\textbf{i}\tilde{\xi})|d\vec{\xi}d\vec{s}\nonumber.
\end{align}
Let $0<\varepsilon<1$ be such that $H(2\ell+1+2\varepsilon)<1$. Using the inequality 
\begin{align*}
|e^{-\textbf{i}\eta}-1-\textbf{i}\eta|
  &\leq 3|\eta|^{1+\varepsilon},
\end{align*}
which is valid for all $\eta\in\R$, we conclude from \eqref{Ltdiffapproxv1} that 
\begin{align}\label{eq:boundsstildesfixedextra}
\|L_{t}^{(\ell-1)}(\lettrequivabien+h)-L_{t}^{(\ell-1)}(\lettrequivabien)-hL_{t}^{(\ell)}(\lettrequivabien)\|_{L^{2}(\Omega)}^2
  &\leq 9h^{2+2\varepsilon}\int_{[0,t]^2}\int_{\R^2}|\xi\tilde{\xi}|^{\ell+\varepsilon}e^{-\frac{1}{2}\vec{\xi}^*\Sigma(\vec{s})\vec{\xi}}d\vec{\xi}d\vec{s}.
\end{align}
By inequality \eqref{eq:boundsstildesfixed} in Lemma \ref{lem:techintegralbound}, there exists a constant $C>0$, such that 
\begin{align*}
\int_{\R^2}|\xi\tilde{\xi}|^{\ell+\varepsilon}e^{-\frac{1}{2}\vec{\xi}^*\Sigma(\vec{s})\vec{\xi}}d\vec{\xi}d\vec{s}
  &\leq C\int_{[0,t]^2}(s\vee \tilde{s})^{-H}(s\wedge\tilde{s}\wedge|s-\tilde{s}|)^{-H(2\ell+2\varepsilon+1)}d\vec{s},
\end{align*}
for some constant $C>0$. Thus, by the condition $H(2\ell+1+2\varepsilon)<1$, we have that the right hand side of \eqref{eq:boundsstildesfixedextra} is finite, which in turn implies that
\begin{align*}
\lim_{h\rightarrow0}\|\frac{1}{h}(L_{t}^{(\ell-1)}(\lettrequivabien+h)-L_{t}^{(\ell-1)}(\lettrequivabien)-hL_{t}^{(\ell)}(\lettrequivabien))\|_{L^{2}(\Omega)}^2\rightarrow0,
\end{align*}
as required. This finishes the proof of \eqref{eq:LTderivativeapprox}.\\

\noindent Next we prove \eqref{eq:momentsofincLt}. To this end, we select a positive sequence $\{ \varepsilon_{n}\}_{n\geq 1}$ converging to zero and, for $t>0$, we write $L_{t,n}^{(\ell)}(\lettrequivabien) := L_{s,\varepsilon_n}^{(\ell)}(\lettrequivabien)$ for simplicity.  We use the convergence $\lim_nL_{s,n}^{(\ell)}(\lettrequivabien)=L_s^{(\ell)}(\lettrequivabien)$, valid for all $s>0$, to write 
\begin{align}\label{ineq:Luvsitsapprox}
\|L_{u}^{(\ell)}(\lettrequivabien)-L_{v}^{(\ell)}(\lettrequivabien)\|_{L^{2p}(\Omega)}^{2p}
   &\leq \limsup_n\|L_{u,n}^{(\ell)}(\lettrequivabien)-L_{v,n}^{(\ell)}(\lettrequivabien)\|_{L^{2p}(\Omega)}^{2p}.
\end{align}
Assume without loss of generality that $u\geq v$. The $L^{2}(\Omega)$-norm in the right hand side of the previous inequality can be estimated by using the identity \eqref{eq:phifourierrep}, which allows us to write
\begin{align}\label{eq:Lellunincrements}
L_{u,n}^{(\ell)}(\lettrequivabien)-L_{v,n}^{(\ell)}(\lettrequivabien)
  &=\frac{\textbf{i}^{\ell}}{2\pi}\int_{[v,u]}\int_{\R}\xi^{\ell}e^{\textbf{i}\xi(X_s-\lettrequivabien)-\frac{1}{2n}\xi^2}d\xi ds.
\end{align}
This leads to 
\begin{align*}
\|L_{u,n}^{(\ell)}(\lettrequivabien)-L_{v,n}^{(\ell)}(\lettrequivabien)\|_{L^{2p}(\Omega)}^{2p}
  &= \frac{(-1)^{\ell p}}{4\pi^2}\int_{[v,u]^{2p}}\int_{\R^{2p}}\E[\prod_{i=1}^{2p}\xi_i^{\ell}e^{\textbf{i}\xi_i (X_{s_i}-\lettrequivabien)-\frac{1}{2n} \xi_i^2}]d\vec{\xi} d\vec{s},
\end{align*}
where $\vec{\xi}=(\xi_1,\dots, \xi_{2p})$ and $\vec{s}=(s_1,\dots, s_{2p})$. From here we can easily obtain 
\begin{align*}
\|L_{u,n}^{(\ell)}(\lettrequivabien)-L_{v,n}^{(\ell)}(\lettrequivabien)\|_{L^{2p}(\Omega)}^{2p}
  &\leq \int_{[v,u]^{2p}}\int_{\R^2} e^{-\frac{1}{2}\vec{\xi}^{*}\Sigma(\vec{s})\vec{\xi}}\prod_{i=1}^{2p}|\xi_i|^{\ell}d\vec{\xi} d\vec{s},
\end{align*}
where $\Sigma(\vec{s})$ denotes the covariance matrix of $(X_{s_1},\dots, X_{s_{2p}})$. Therefore, by applying Lemma \ref{lem:techintegralboundasd} with $m=2p$, $u_{i}=s_i$ and $z_{i}=1$, we get
\begin{align*}
\|L_{u,n}^{(\ell)}(\lettrequivabien)-L_{v,n}^{(\ell)}(\lettrequivabien)\|_{L^{2p}(\Omega)}^{2p}
  &\leq C\int_{[v,u]^{2p}}(s_{1}(s_{2p}-s_{2p-1}))^{-H}((s_{1}\wedge(s_{2}-s_1))(s_{2p}-s_{2p-1}))^{-H\ell}\\
	&\times\prod_{i=2}^{2p-1}((s_{i}-s_{i-1})^{-H}((s_{i}-s_{i-1})\wedge(s_{i+1}-s_{i}))^{-H\ell})d\vec{s}.
\end{align*} 
Thus, by changing the coordinates $(s_{1},\dots, s_{2p})$ by $(\tau_{1}:=s_{1},\tau_2:=s_{2}-s_{1},\dots, \tau_{2p}:=s_{2p}-s_{2p-1})$, we obtain 
\begin{align*}
\|L_{u,n}^{(\ell)}(\lettrequivabien)-L_{v,n}^{(\ell)}(\lettrequivabien)\|_{L^{2p}(\Omega)}^{2p}
  &\leq C\int_{v}^{u}\int_{[0,v-u]^{2p-1}}\bigg(\prod_{i=1}^{2p}\tau_{i}^{-H}\bigg)\bigg(\tau_{2p}^{-H\ell}
	\prod_{i=1}^{2p-1}(\tau_i\wedge\tau_{i+1})^{-H\ell}\bigg)d\tau_{2p-1}\cdots d\tau_{1}.
\end{align*}
By using the bound $\tau_{1}^{-H}\leq (v-u)^{-H}\Indi{\{\tau_{1}\geq v-u\}}+\tau_{1}^{-H}\Indi{\{\tau_{1}\leq v-u \}}$ in the integral above, we obtain 
\begin{multline}
\|L_{u,n}^{(\ell)}(\lettrequivabien)-L_{v,n}^{(\ell)}(\lettrequivabien)\|_{L^{2p}(\Omega)}^{2p}\\
\begin{aligned}
  &\leq C(v-u)^{-H}\int_{v}^{u}\int_{[0,v-u]^{2p-1}}\bigg(\prod_{i=2}^{2p}\tau_{i}^{-H}\bigg)\bigg(\tau_{2p}^{-H\ell}
	\prod_{i=1}^{2p-1}(\tau_i\wedge\tau_{i+1})^{-H\ell}\bigg)d\tau_{2p-1}\cdots d\tau_{1}\\
	&+C\int_{[0,v-u]^{2p}}\bigg(\prod_{i=1}^{2p}\tau_{i}^{-H}\bigg)\bigg(\tau_{2p}^{-H\ell}
	\prod_{i=1}^{2p-1}(\tau_i\wedge\tau_{i+1})^{-H\ell}\bigg)d\tau_{2p-1}\cdots d\tau_{1},
\end{aligned}
\end{multline}
which in turn leads to 
\begin{multline*}
\|L_{u,n}^{(\ell)}(\lettrequivabien)-L_{v,n}^{(\ell)}(\lettrequivabien)\|_{L^{2p}(\Omega)}^{2p}\\
\begin{aligned}
	&\leq C(v-u)^{-H}\int_{v}^{u}\int_{[0,v-u]^{2p-1}}\bigg(\prod_{i=2}^{2p}\tau_{i}^{-H}\bigg)\bigg(\tau_{2p}^{-H\ell}
	\prod_{i=1}^{2p-1}(\tau_i^{-H\ell}+\tau_{i+1}^{-H\ell})\bigg)d\tau_{2p-1}\cdots d\tau_{1}\\
	&+C\int_{[0,v-u]^{2p}}\bigg(\prod_{i=1}^{2p}\tau_{i}^{-H}\bigg)\bigg(\tau_{2p}^{-H\ell}
	\prod_{i=1}^{2p-1}(\tau_i^{-H\ell}+\tau_{i+1}^{-H\ell})\bigg)d\tau_{2p-1}\cdots d\tau_{1}.
\end{aligned}
\end{multline*}
By expanding the product above, we can write
\begin{multline}\label{ineq:Llincrementsbound}
\|L_{u,n}^{(\ell)}(\lettrequivabien)-L_{v,n}^{(\ell)}(\lettrequivabien)\|_{L^{2p}(\Omega)}^{2p}\\
\begin{aligned}
	&\leq C(v-u)^{-H}\int_{v}^{u}\int_{[0,v-u]^{2p-1}}\sum_{\substack{\alpha_{2},\dots, \alpha_{2p}\in\{0,1,2\}\\\sum_{i=1}^{2p}\alpha_i=2p}}\tau_{1}^{-H\ell\alpha_1}\prod_{i=2}^{2p}\tau_{i}^{-H(1+\alpha_i\ell)} d\tau_{2p-1}\cdots d\tau_{1}\\
	&+C \int_{[0,v-u]^{2p}}\sum_{\substack{\alpha_{1},\dots, \alpha_{2p}\in\{0,1,2\}\\\sum_{i=1}^{2p}\alpha_i=2p}}\prod_{i=1}^{2p}\tau_{i}^{-H(1+\alpha_i\ell)} d\tau_{2p-1}\cdots d\tau_{1}.
\end{aligned}
\end{multline}
Due to the condition $H(2\ell+1)<1$, the integrals in the right-hand side of \eqref{ineq:Llincrementsbound} are finite and can be computed explicitly, leading to
\begin{align}\label{ineq:Luapproxbound}
\|L_{u,n}^{(\ell)}(\lettrequivabien)-L_{v,n}^{(\ell)}(\lettrequivabien)\|_{L^{2p}(\Omega)}^{2p}
	&\leq C (v-u)^{2p(1-H(\ell+1))}.
\end{align}
Relation \eqref{eq:momentsofincLt} then follows from \eqref{ineq:Luvsitsapprox} and \eqref{ineq:Luapproxbound}.\\

\noindent Finally, we prove that for all $\gamma<1-H(2\ell+1)$, the process $L_{t,n}^{(\ell)}(\lettrequivabien)$ has a H\"older continuous modification with exponent $\gamma$. By the condition $H<\frac{1}{2\ell+1}$, we have that for all $p>\frac{1}{2(1-H(\ell+1))}$, the exponent in the right hand side of \eqref{eq:momentsofincLt} is strictly bigger than 1. The H\"older continuity of $X$ then follows from the Kolmogorov continuity criterion.
\end{proof}

The next statement is the announced counterpart to Lemma \ref{Lem:existLocalderiv}, showing that the conditions on $H$ appearing therein  are  sharp.

\begin{Lemma}\label{lem:nonexistenceLell}
Let $\ell\geq 1$ be an even integer satisfying $H\geq \frac{1}{2\ell+1}$. Then,% there exists $\ell_0\in\{0,\dots, \ell\}$, such that 
\begin{align}\label{eq:nonexistenceLell}
\lim_{n\rightarrow\infty}\|L_{t,n}^{(\ell)}(0)\|_{L^2(\Omega)}=\infty.
\end{align}
\end{Lemma}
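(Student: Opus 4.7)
The plan is to exploit the Fourier representation of $L_{t,\varepsilon}^{(\ell)}(0)$ together with the hypothesis that $\ell$ is even, in order to pass to the limit $\varepsilon\downarrow 0$ by monotone convergence, and then to produce an elementary lower bound on the resulting integral that diverges exactly when $H(2\ell+1)\geq 1$.

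I will start from the identity derived in the proof of Lemma \ref{Lem:existLocalderiv}: for every $\varepsilon>0$,
\begin{align*}
\E\bigl[L_{t,\varepsilon}^{(\ell)}(0)^2\bigr]
  = \frac{(-1)^{\ell}}{4\pi^2}\int_{[0,t]^2}\int_{\R^2}(\xi\tilde\xi)^\ell e^{-\frac{1}{2}\vec\xi^{*}(\Sigma(\vec s)+\varepsilon I)\vec\xi}d\vec\xi\, d\vec s,
\end{align*}
where $\Sigma(\vec s)$ denotes the covariance matrix of $(X_s,X_{\tilde s})$ and $\vec \xi = (\xi,\tilde\xi)$. Since $\ell$ is even, $(-1)^\ell=1$ and the integrand is non-negative and pointwise increasing as $\varepsilon\downarrow 0$; monotone convergence along the sequence $\varepsilon_n\downarrow 0$ used to define $L_{t,n}^{(\ell)}$ will therefore reduce the problem to showing that
\begin{align*}
\int_{[0,t]^2}J(\vec s)\,d\vec s=+\infty,\qquad J(\vec s):=\int_{\R^2}(\xi\tilde\xi)^\ell e^{-\frac{1}{2}\vec\xi^{*}\Sigma(\vec s)\vec\xi}d\vec\xi.
\end{align*}

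For fixed $\vec s=(s,\tilde s)$ with $0<s\leq \tilde s\leq t$, I will recognize the inner integral as a Gaussian moment: if $(Y_1,Y_2)\sim N(0,\Sigma(\vec s)^{-1})$, then $J(\vec s)=2\pi|\Sigma(\vec s)|^{-1/2}\,\E[Y_1^\ell Y_2^\ell]$. To bound $\E[Y_1^\ell Y_2^\ell]$ from below I will invoke Isserlis' formula: the expectation is a sum over perfect matchings of $\ell$ copies of $Y_1$ and $\ell$ copies of $Y_2$, and each matching contributes $\textup{Var}(Y_1)^p\,\textup{Var}(Y_2)^q\,\textup{Cov}(Y_1,Y_2)^k$ with $p+q+k=\ell$, where the parity of $k$ must match that of $\ell$ --- hence $k$ is always \emph{even}. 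All contributions are therefore non-negative, and I may retain only the $[(\ell-1)!!]^2$ ``same-type'' matchings (pair the $Y_1$'s among themselves, similarly the $Y_2$'s --- possible because $\ell$ is even), which yield
\begin{align*}
\E[Y_1^\ell Y_2^\ell]\geq [(\ell-1)!!]^2\,\textup{Var}(Y_1)^{\ell/2}\textup{Var}(Y_2)^{\ell/2}=[(\ell-1)!!]^2\,\frac{(s\tilde s)^{H\ell}}{|\Sigma(\vec s)|^\ell},
\end{align*}
upon using the entries of $\Sigma(\vec s)^{-1}$, namely $\textup{Var}(Y_1)=\tilde s^{2H}/|\Sigma(\vec s)|$ and $\textup{Var}(Y_2)=s^{2H}/|\Sigma(\vec s)|$.

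To finish, I will insert the elementary upper bound $|\Sigma(\vec s)|=\textup{Var}(X_s)\,\textup{Var}(X_{\tilde s}\mid X_s)\leq s^{2H}|\tilde s-s|^{2H}$ (obtained by taking $\lambda=1$ in $\textup{Var}(X_{\tilde s}\mid X_s)=\min_\lambda\textup{Var}(X_{\tilde s}-\lambda X_s)$), which gives, for $0<s\leq\tilde s\leq t$,
\begin{align*}
J(\vec s)\geq c_\ell\,\frac{(\tilde s/s)^{H\ell}}{s^H\,|\tilde s-s|^{H(2\ell+1)}}.
\end{align*}
On the region $\{\vec s\in[t/2,t]^2:\,s\leq\tilde s\}$ the prefactor $(\tilde s/s)^{H\ell}/s^H$ is bounded below, so $J(\vec s)\geq c_t'\,|\tilde s-s|^{-H(2\ell+1)}$ there. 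Under the hypothesis $H(2\ell+1)\geq 1$, already the one-variable integral $\int_{s}^{t}|\tilde s-s|^{-H(2\ell+1)}d\tilde s$ is infinite, so $\int J(\vec s)\,d\vec s=+\infty$, as required. The only non-routine step is the Isserlis-based lower bound; notice that the parity of $\ell$ plays a crucial role twice --- it makes the Fourier integrand non-negative so that monotone convergence applies, and it ensures that the ``same-type pairings'' term in the Isserlis expansion is available and non-negative. The same evenness explains why this cheap argument does \emph{not} cover odd $\ell$, for which the cancellations in the Fourier integral require a more delicate treatment.
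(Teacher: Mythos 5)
Your proof is correct, and its overall skeleton coincides with the paper's: both start from the Fourier second--moment identity \eqref{eq:LtnLtmfirstrel} specialised to $\lettrequivabien=0$, remove the mollification (you by monotone convergence, which is available precisely because $\ell$ is even so the integrand is non-negative; the paper by Fatou applied to a non-negative lower bound, cf.\ \eqref{eq:suofhermitelocal2}), and conclude by showing that the limiting integrand dominates $c_t\,|\tilde s-s|^{-H(2\ell+1)}$ on a square bounded away from the origin, using $|\Sigma(\vec s)|=\mathrm{Var}[X_s]\,\mathrm{Var}[X_{\tilde s}\,|\,X_s]\le s^{2H}|\tilde s-s|^{2H}$. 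Where you genuinely differ is in the lower bound for the Gaussian integral $\int_{\R^2}(\xi\tilde\xi)^{\ell}e^{-\frac12\vec\xi^{*}\Sigma\vec\xi}d\vec\xi$. The paper expands $x^\ell$ in Hermite polynomials and invokes the orthogonality relation \eqref{eq:Hqorthog}; after inverting $\Sigma$, the resulting sum over $q$ is exactly your Isserlis sum for $\E[Y_1^\ell Y_2^\ell]$ with $(Y_1,Y_2)\sim N(0,\Sigma^{-1})$, reindexed by the number $k=\ell-2q$ of cross-pairings, so the two expansions are the same object. The difference is which term each of you keeps: the paper retains $q=0$ (all cross-pairings), which yields $R(s,\tilde s)^{\ell}/|\Sigma|^{\ell}$ and therefore requires the additional facts that $R\ge 0$ and that $R(s,\tilde s)\ge t^{2H}/4$ on the chosen region; you retain $k=0$ (the $[(\ell-1)!!]^2$ same-type pairings), which yields $(s\tilde s)^{H\ell}/|\Sigma|^{\ell}$ and needs only the marginal variances, trivially bounded below on $[t/2,t]^2$. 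Since near the diagonal $R(s,\tilde s)\asymp s^{2H}\asymp\tilde s^{2H}$, both retained terms are of the same order and both arguments close; yours is marginally more robust in that it never uses the sign or size of the covariance $R$, only the evenness of $\ell$ to guarantee that all pairing contributions are non-negative.
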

\begin{proof}
%We proceed by contradiction. Suppose that $L_{t,n}^{(k)}$ converges in $L^{2}(\Omega)$ for all $k=0,\dots, \ell$. 
%
%%Let $c_{0},\dots, c_{\ell}$ be the coefficients of Hermite polynomial of order $\ell$ rescaled by $-\textbf{i}$, given by 
%$$H_{\ell}(x)
  %=\sum_{k=0}^{\ell}c_k(-\textbf{i})^{k}x^{k}.$$
%Then, if $\|L_{t,n}^{(p)}(0)\|_{L^2(\Omega)}<\infty$ for all $p=0,\dots, \ell$, we have that 
%\begin{align}
%\sup_{n\geq 1} \|\sum_{k=0}^{\ell} c_{k}L_{t,n}^{(k)}(0)\|_{L^2(\Omega)}<\infty,
%\end{align}
Using \eqref{eq:Lellunincrements}, we can easily show that
%\begin{align*}
%\sum_{k=0}^{\ell} c_{k}L_{t,n}^{(k)}(0)
  %&=\frac{1}{2\pi}\int_{0}^{t}\int_{\R}H_{\ell}(\xi)e^{\textbf{i}\xi X_s-\frac{1}{2n}\xi^2}d\xi ds.
%\end{align*}
%From here it easily follows that  if $\Lambda^{(n)}(s,\tilde{s})$ denotes the covariance matrix of $(X_{s},X_{\tilde{s}})$, 
\begin{align}\label{eq:powerlemmaapp}
\| L_{t,n}^{(\ell)}(0)\|_{L^{2}(\Omega)}^2
  &=\frac{(-1)^{\ell}}{4\pi^2}\int_{[0,t]^2}\int_{\R^2}(\xi\tilde{\xi})^{\ell}e^{-\frac{1}{2}\vec{\xi}^*\Lambda^{(n)}(s,\tilde{s})\vec{\xi}}d\vec{\xi} d\vec{s},
\end{align}
where $\Lambda^{(n)}(s,\tilde{s})$ denotes the covariance matrix of $(X_{s},X_{\tilde{s}})$. In order to compute the integral over $\vec{\xi}$ appearing in the right-hand side, we proceed as follows. First we decompose the power function $x^{\ell}$ in the form
$$x^{\ell}
  =\sum_{q=0}^{ \lfloor \ell/2\rfloor }c_{q,\ell}H_{\ell-2q}(x),$$
where $c_{q,\ell}:=\frac{\ell!}{q!(\ell-q)!}$ and $H_{q}$ denotes the $q$-th Hermite polynomial. Therefore, if we define $\tilde{\Lambda}^{(n)}(s,\tilde{s})$ as the inverse of $\Lambda^{(n)}(s,\tilde{s})$ and $\Phi_{\tilde{\Lambda}^{(n)}(s,\tilde{s})}$ as the centered Gaussian kernel with covariance $\tilde{\Lambda}^{(n)}(s,\tilde{s})$,  we have that 
\begin{multline*}
\int_{[0,t]^2}\int_{\R^2}(\xi\tilde{\xi})^{\ell}e^{-\frac{1}{2}\vec{\xi}^*\Lambda^{(n)}(s,\tilde{s})\vec{\xi}}d\vec{\xi}\\
\begin{aligned} 
  &=|\tilde{\Lambda}^{(n)}(s,\tilde{s})|^{\frac{1}{2}}\int_{[0,t]^2}\int_{\R^2}(\tilde{\Lambda}_{1,1}^{(n)}(s,\tilde{s})\tilde{\Lambda}_{2,2}^{(n)}(s,\tilde{s}))^{\frac{\ell}{2}}\\
	&\times \sum_{q,\tilde{q}=0}^{ \lfloor \ell/2\rfloor }c_{q,\ell}c_{\tilde{q},\ell}H_{\ell-2q}\bigg(\frac{\xi}{\sqrt{\tilde{\Lambda}_{1,1}^{(n)}(s,\tilde{s})}}\bigg)H_{\ell-2\tilde{q}}\bigg(\frac{\tilde{\xi}}{\sqrt{\tilde{\Lambda}_{1,1}^{(n)}(s,\tilde{s})}}\bigg) \Phi_{\tilde{\Lambda}^{(n)}(s,\tilde{s})}(\vec{\xi})d\vec{\xi}d\vec{s},
\end{aligned}
\end{multline*}
which by \eqref{eq:Hqorthog}, implies that 
\begin{multline*}
\int_{[0,t]^2}\int_{\R^2}(\xi\tilde{\xi})^{\ell}e^{-\frac{1}{2}\vec{\xi}^*\Lambda^{(n)}(s,\tilde{s})\vec{\xi}}d\vec{\xi}\\
  =|\tilde{\Lambda}^{(n)}(s,\tilde{s})|^{\frac{1}{2}}\int_{[0,t]^2} (\tilde{\Lambda}_{1,1}^{(n)}(s,\tilde{s})\tilde{\Lambda}_{2,2}^{(n)}(s,\tilde{s}))^{\frac{\ell}{2}} \sum_{q=0}^{ \lfloor \ell/2\rfloor }c_{q,\ell}\bigg(\frac{\tilde{\Lambda}_{1,2}^{(n)}(s,\tilde{s})}{\sqrt{\tilde{\Lambda}_{1,1}^{(n)}(s,\tilde{s}) \tilde{\Lambda}_{2,2}^{(n)}(s,\tilde{s})}}\bigg)^{\ell-2q}d\vec{s}.
\end{multline*}
Therefore, using the fact that
\begin{align*}
\tilde{\Lambda}_{2,2}^{(n)}(s,\tilde{s})
  &=\frac{1}{|\Lambda^{(n)}(s,\tilde{s})|}\left(\begin{array}{cc}\tilde{s}^{2H} +\frac{1}{n}& -R(s,\tilde{s})\\-R(s,\tilde{s})& s^{2H}+\frac{1}{n} \end{array}\right),
\end{align*}
we obtain  
\begin{multline*}
(-1)^{\ell}\int_{[0,t]^2}\int_{\R^2}(\xi\tilde{\xi})^{\ell}e^{-\frac{1}{2}\vec{\xi}^*\Lambda^{(n)}(s,\tilde{s})\vec{\xi}}d\vec{\xi}\\
  =\int_{[0,t]^2}|\Lambda^{(n)}(s,\tilde{s})|^{-\frac{1}{2}-\ell}((s^{H}+\frac{1}{n})(\tilde{s}^{H}+\frac{1}{n}))^{\ell}\sum_{q=0}^{ \lfloor \ell/2\rfloor }c_{q,\ell}\bigg(\frac{R(s,\tilde{s})}{(s^{H}+\frac{1}{n})(\tilde{s}^{H}+\frac{1}{n})}\bigg)^{\ell-2q}d\vec{s}.
\end{multline*}
Using the previous identity as well as the fact that $R(s,\tilde{s})\geq0$ and $c_{0,\ell}=1$, we conclude that 
\begin{multline*}
(-1)^{\ell}\int_{[0,t]^2}\int_{\R^2}(\xi\tilde{\xi})^{\ell}e^{-\frac{1}{2}\vec{\xi}^*\Lambda^{(n)}(s,\tilde{s})\vec{\xi}}d\vec{\xi}\\
  \geq \int_{[0,t]^2}|\Lambda^{(n)}(s,\tilde{s})|^{-\frac{1}{2}-\ell}((s^{H}+\frac{1}{n})(\tilde{s}^{H}+\frac{1}{n}))^{\ell} \bigg(\frac{R(s,\tilde{s})}{(s^{H}+\frac{1}{n})(\tilde{s}^{H}+\frac{1}{n})}\bigg)^{\ell}d\vec{s},
\end{multline*}
which by \eqref{eq:powerlemmaapp}, leads to 

%can rewrite \eqref{eq:Hermitemix} as
\begin{align}\label{eq:suofhermitelocal}
\| L_{t,n}^{(\ell)}(\lettrequivabien)\|_{L^{2}(\Omega)}^2
  &\geq\frac{1}{4\pi^2}\int_{[0,t]^2}|\Lambda^{(n)}(s,\tilde{s})|^{-\frac{1}{2}}\bigg(\frac{R(s,\tilde{s})}{|\Lambda^{(n)}(s,\tilde{s})|}\bigg)^{\ell}d\vec{s}.
\end{align}
Define $\Sigma(s,\tilde{s})$ as the covariance matrix of $(X_{s},X_{\tilde{s}})$. Combining \eqref{eq:suofhermitelocal} with Fatou's lemma, and the fact that $R(s,\tilde{s})\geq0$ for all $s,\tilde{s}\geq0$, we deduce that
\begin{align}\label{eq:suofhermitelocal2}
0\leq \int_{[0,t]^2}|\Sigma(s,\tilde{s})|^{-\frac{1}{2}}\bigg(\frac{R(s,\tilde{s})}{|\Sigma(s,\tilde{s})|}\bigg)^{\ell}d\vec{s}
  &\leq\limsup_n\int_{[0,t]^2}|\Lambda^{(n)}(s,\tilde{s})|^{-\frac{1}{2}}\bigg(\frac{R(s,\tilde{s})}{|\Lambda^{(n)}(s,\tilde{s})|}\bigg)^{\ell}d\vec{s}\nonumber\\
	&\leq 4\pi^2\limsup_{n}\| L_{t,n}^{(\ell)}(0)\|_{L^{2}(\Omega)}^2.
\end{align}
Thus, to prove \eqref{eq:nonexistenceLell}, it suffices to find a divergent lower bound for the integral in the left-hand side. To this end, define  
$$\mathcal{C} := \{(s,\tilde{s})\in[0,t]^2\ \processsymb\ \frac{t}{2}\leq s\leq \tilde{s}\leq s+\frac{3t}{2}\},$$
and notice that by \eqref{eq:varreduceinfo}, for every $(s,\tilde{s})\in\mathcal{C}$, we have that $R(s,\tilde{s})\geq \frac{t^{2H}}{4}$ and 
\begin{align*}
|\Sigma(s,\tilde{s})|
  &=\text{Var}[X_{s}]\text{Var}[X_{\tilde{s}}\ |\ X_{s}]
	=\text{Var}[X_{s}]\text{Var}[X_{\tilde{s}}-X_{s}\ |\ X_{s}]
	\leq t^{2H}(\tilde{s}-s)^{2H}.
\end{align*}
Thus, by \eqref{eq:suofhermitelocal2},
\begin{align}\label{eq:nonexistenceLellprev}
\int_{\mathcal{C}}t^{-H}4^{-\ell}(\tilde{s}-s)^{-H(2\ell+1)}d\vec{s}
 &\leq 4\pi^2\limsup_{n}\|L_{t,n}^{(\ell)}(0)\|_{L^{2}(\Omega)}^2.
\end{align}  
Notice that there exists a small ball contained in $\mathcal{C}$ with center in the diagonal of $[0,t]^2$, which forces the integral in the left-hand side to be divergent due to the condition $H\geq \frac{1}{2\ell+1}$. Relation \eqref{eq:nonexistenceLell} then follows from  \eqref{eq:nonexistenceLellprev}.
	
\end{proof}

\end{document}